\documentclass[a4paper,11pt]{article}

\usepackage{fancyhdr}
\usepackage[numbers]{natbib}
\usepackage{float}
\usepackage{subfigure}
\usepackage{amsmath}
\usepackage{hyperref}
\usepackage{amsthm}
\usepackage{multirow}
\usepackage{booktabs}
\usepackage{bm}
\usepackage{amssymb}
\usepackage{algorithm}
\usepackage{algpseudocode}
\usepackage{ragged2e} 
\usepackage{empheq}
\usepackage{enumitem}
\usepackage{siunitx}

\usepackage[a4paper,left=2.cm,right=2.cm,top=3.cm,bottom=3.cm]{geometry}

\def\tsc#1{\csdef{#1}{\textsc{\lowercase{#1}}\xspace}}
\tsc{WGM}
\tsc{QE}

\newtheorem{theorem}{Theorem}
\newtheorem{lemma}[theorem]{Lemma}
\newtheorem{remark}{Remark}
\newtheorem{assumption}{Assumption}
\newtheorem{definition}{Definition}
\newtheorem{proposition}[theorem]{Proposition}

\begin{document}
\let\WriteBookmarks\relax
\def\floatpagepagefraction{1}
\def\textpagefraction{.001}



\title{A geometry-based deep equilibrium model for image restoration under multiplicative Gamma noise}  
\newcommand{\shorttitle}{A geometry-based deep equilibrium model}



%

\pagestyle{fancy}
\fancyhf{}
\fancyhead[L]{\shorttitle} 
\fancyhead[R]{\thepage}   
\renewcommand{\headrulewidth}{0.4pt}

\author{
    Shengkun Yang
    \thanks{School of Mathematics, Harbin Institute of Technology, Harbin, 150001, Heilongjiang, China. Email: 23B912006@stu.hit.edu.cn.}
    \and
    Luca Ratti
    \thanks{Department of Mathematics, University of Bologna, Bologna, 40126, Emilia-Romagna, Italy. 
    Email: luca.ratti5@unibo.it. Corresponding author.}
    \and
    Zhichang Guo
    \thanks{School of Mathematics, Harbin Institute of Technology, Harbin, 150001, Heilongjiang, China. Email: mathgzc@hit.edu.cn.}
}

\maketitle

\begin{abstract}
We propose a deep learning framework for image restoration from images degraded by both multiplicative Gamma noise and blur. Unlike conventional deep equilibrium (DEQ) models that rely on implicit neural regularization, the proposed method learns an explicit and interpretable regularizer parameterized by geometric priors associated with surface area and mean curvature. To minimize the resulting variational model, we develop a mirror descent algorithm tailored to the commonly used Gamma-noise fidelity terms. Leveraging the Kurdyka–\L{}ojasiewicz property for functions defined in $o$-minimal structures, we establish the global convergence of the generated iterates to a critical point. Experimental results on both grayscale and color image restoration demonstrate that the proposed method consistently outperforms representative model-based approaches while achieving performance comparable to state-of-the-art DEQ models based on implicit regularization, despite requiring substantially fewer trainable parameters.
\end{abstract}

{\bf Keywords:}
 Deep equilibrium models; Learned geometric regularization; Gamma noise; Nonconvex optimization
\bigskip

\section{Introduction}
\label{intro}

Multiplicative Gamma noise commonly arises in image restoration problems associated with coherent imaging systems. Representative applications include Synthetic Aperture Radar (SAR), laser imaging, ultrasound imaging, and tomographic modalities, all of which involve coherent interference or exponential attenuation phenomena \cite{rudin2003multiplicative,el2026nonlocal,barnett2025multiscale}. Mathematically, the corresponding inverse problem aims to recover the unknown image $x$ from degraded observations
\begin{equation*}
	y=(A x) \eta,
\end{equation*}
where $A$ denotes a known ill-conditioned measurement operator and $\eta$ is a multidimensional Gamma random variable whose components have mean 1 and variance $\frac{1}{L}$, $L$ being the equivalent number of looks \cite{cui2011unsupervised}. The signal-dependent nature of the noise intensity, coupled with the complexity of the blurring operator, renders this inverse problem intrinsically ill-posed and challenging for both model formulation and algorithm design \cite{ran2025bv}.

Early studies on multiplicative noise removal were predominantly variational, incorporating prior image information through carefully designed regularization and fidelity terms. One of the earliest variational models for multiplicative denoising is the RLO model \cite{rudin2003multiplicative}, which combines total variation (TV) regularization with fidelity constraints on the image mean and variance. To better align the model with the statistical description of Gamma noise, Aubert and Aujol \cite{aubert2008variational} proposed a new fidelity term derived from a maximum a posteriori (MAP) perspective. The resulting model, however, is nonconvex, making convergence analysis considerably more challenging. Subsequent research introduced a variety of improvements, including variable transformation methods \cite{shi2008nonlinear}, variable splitting techniques \cite{huang2009new,zhao2014new}, and formulations involving auxiliary fidelity terms \cite{dong2013convex}. While TV-based regularization achieves satisfactory denoising performance, it is prone to artifacts such as contrast attenuation, corner smearing, and staircase effects \cite{zhu2012image}. Higher-order geometric regularization has since been proposed to mitigate these drawbacks: Zhang \cite{zhang2022image} introduced an adaptive Euler's elastica model within a level-set framework that suppresses staircase artifacts while preserving geometric structure, and Yang et al. \cite{yang2025mixed} developed a curvature-based elastica functional that further improves the preservation of edges and fine details.

Beyond variational models, nonlocal approaches have also been extensively investigated. Exploiting sparse representations among similar patches, Deledalle et al. \cite{deledalle2017mulog} developed the MuLoG+BM3D framework for speckle reduction in synthetic aperture radar images. Moreover, Liu et al. \cite{liu2020multiplicative} constructed a low-rank distance regularization based on nonlocal patch similarity and proposed a convergent optimization algorithm for the resulting model. For a more comprehensive overview of recent advances in multiplicative Gamma noise removal, the interested reader is referred to \cite{feng2021models,ullah2017new}.

Despite the aforementioned model‑based methods have shown promising performance, they rely heavily on handcrafted prior assumptions to characterize image structures through regularization. Consequently, when the prescribed priors fail to accurately capture the underlying image properties, the restoration quality may deteriorate significantly. Moreover, these methods typically require iterative stage-wise optimization and careful manual parameter tuning, which substantially increases the computational and practical burden.

Model-based deep learning (MBDL) methods have recently emerged as an effective alternative to purely model-driven approaches by integrating physical measurement models with data-driven learned priors. Among them, the plug-and-play (PnP) framework has attracted considerable attention due to its flexibility in incorporating learned Gaussian denoisers into a wide range of imaging tasks \cite{zhang2021plug,wang2019deep,li2023deep}. Another representative MBDL paradigm is deep unfolding (DU), which interprets each iteration of an optimization algorithm as a network layer and trains the resulting architecture in an end-to-end supervised manner \cite{chen2016trainable,zhang2018ista,muckley2021results}. Despite their success, DU methods are often limited by the memory consumption and numerical instability associated with backpropagation through deep iterative architectures. Consequently, practical implementations usually involve only a small number of unfolding iterations, which restricts flexibility during inference. To address this limitation, deep equilibrium (DEQ) models \cite{bai2019deep}, which can be interpreted as infinitely deep networks, have recently been introduced for solving imaging problems \cite{gilton2021deep}. Rather than explicitly stacking network layers, DEQ learns a fixed point whose equilibrium state approximates the ground-truth image while requiring substantially less memory during training. Under suitable smoothness assumptions on the learned operator together with standard optimization conditions, the convergence of the corresponding fixed-point iterations can be rigorously established. These theoretical guarantees have enabled successful applications to Poisson denoising \cite{daniele2026deep} and Gaussian image restoration \cite{gkillas2023optimization}. Nevertheless, extending the DEQ framework to multiplicative Gamma noise is substantially more involved, since the corresponding data-fidelity terms exhibit markedly stronger non-Lipschitz behavior, rendering convergence analyses based on the standard Lipschitz smoothness assumption no longer applicable.

In this work, we develop a deep equilibrium (DEQ) framework for image restoration under joint blur and multiplicative Gamma noise degradations. From a modeling perspective, we propose an explicit variational regularizer parameterized by learnable convolution filters and influence functions. The resulting regularizer naturally falls within the Field-of-Experts (FoE) framework \cite{roth2005fields} while explicitly incorporating geometric priors based on surface area and mean curvature, thereby combining the interpretability of variational models with the flexibility of data-driven learning. From an algorithmic perspective, we introduce a novel Bregman potential together with a mirror descent inference scheme tailored to Gamma noise. From a theoretical perspective, we establish global convergence of the inference iterations to a critical point of the underlying objective functional. Finally, extensive experiments on both grayscale and color image restoration demonstrate that the proposed method consistently outperforms representative variational and deep learning approaches.

The remainder of this paper is organized as follows. Section \ref{sec:DEQ_MD} introduces the parameterized variational model and the proposed mirror descent inference algorithm. Section \ref{sec:Cov_DEQ} presents the convergence analysis of the lower-level problem. Section \ref{sec:ForBack} describes the forward fixed-point computation and backward training strategy within the DEQ framework. Experimental results are reported in Section \ref{sec:Exper}. Finally, Section \ref{sec:conclude} draws some conclusions of the proposed studies. To keep the manuscript self-contained, the basic definitions and auxiliary theoretical results are provided in Appendices \ref{Appendix:Bregman} and \ref{Appendix:KLfunctions}.

\section{Deep equilibrium modeling with mirror descent}\label{sec:DEQ_MD}
Deep equilibrium models 
learn fixed points of parametric functions represented by neural network, and 
are equivalent to infinitely deep recurrent neural networks whose equilibrium states provide accurate approximations of the ground truth.
By drawing connections to classical optimization algorithms, such as gradient descent and proximal gradient methods, DEQ models can be naturally adapted to (linear) inverse problems with additive Gaussian noise \cite{gilton2021deep,gkillas2023highly,gkillas2023optimization}. However, in the presence of multiplicative Gamma noise, the commonly used data-fidelity terms are generally not $L$-smooth, which introduces additional difficulties in both algorithm design and convergence analysis. Motivated by \cite{zou2023deep,daniele2026deep}, we incorporate a mirror descent strategy into the DEQ framework to address image restoration problems involving blur operators and Gamma noise. Under suitable constraints on the optimization parameters, convergence guarantees of the proposed method can be established.

To systematically formulate the proposed DEQ framework, we consider a dataset $\{(x_i^*,y_i)\}_{i=1}^N$, where $x_i^*\in\mathbb{R}^n$ denotes the ground-truth image and $y_i\in\mathbb{R}^m$ represents the corresponding degraded observation. Let $f_\theta\left(\cdot;y_i\right):\mathbb{R}^n\rightarrow \mathbb{R}^n$ be an image-to-image neural network parameterized by $\theta\in\Theta\subset\mathbb{R}^p$. The proposed DEQ model is formulated as the bilevel problem:
\begin{equation}
\begin{aligned}
&\hat{\theta}\in \underset{\theta\in\Theta}{\arg\min}\;\mathcal{L}(\theta)=\frac{1}{N}\sum_{i=1}^N\ell\left(
x_i^\infty\big(\theta;(x_i^0,y_i)\big),x_i^*\right),\\
&\text{s.t.}\hspace{0.5cm} x_i^\infty\big(\theta;(x_i^0,y_i)\big)\in\text{Fix}(f_\theta(\cdot;y_i)).\\
\end{aligned}
\label{eq:bilevelproblm}
\end{equation}
Here, $x_i^0$ denotes the initial estimate for the $i$-th sample, $\ell$ is the upper-level loss function (e.g., the mean squared error), and $\operatorname{Fix}(f_\theta(\cdot;y_i))$ denotes the set of fixed points of the mapping $f_\theta(\cdot;y_i)$. Consequently, the lower-level problem amounts to computing a fixed point of $f_\theta(\cdot;y_i)$. To ensure consistency with the underlying variational model, we construct $f_\theta$ such that its fixed points coincide with the critical points of a regularized objective functional $F_\theta$. Specifically, under a box-constrained variational formulation \cite{chan2012multiplicative,zeng2019discontinuity}, we consider the composite energy
\begin{equation*}
F_\theta(x;y_i)=\mathcal{D}(x;y_i)+\lambda R_\theta(x)+\iota_{[0,a]^n}(x),
\end{equation*}
where $\mathcal{D}(x;y_i)$ denotes the data-fidelity term enforcing consistency between the reconstruction $x$ and the observed data $y_i$, $R_\theta$ is a learnable regularizer equipped with explicit geometric priors, $\lambda>0$ is the regularization parameter that balances the influence of the data term and the regularizer, and $\iota$ is the indicator function associated with the admissible set $[0,a]^n$, defined by
\begin{equation*}
\iota_{[0,a]^n}(x)=\begin{cases}
0, & x \in [0,a]^n, \\
+\infty, & \text{otherwise.}
\end{cases}
\end{equation*}
For imaging problems corrupted by multiplicative Gamma noise, the data-fidelity term $\mathcal{D}(x;y_i)$ is typically chosen as one of the following two formulations.

\begin{itemize}
\item[$\bullet$] (KL-type fidelity) A generalized Kullback--Leibler fidelity, originally introduced for Poisson noise and subsequently adapted to Gamma noise \cite{steidl2010removing,dong2013convex}:
\begin{equation*}
\mathcal{D}^{KL}(x;y_i)=\sum_{q=1}^m \left((Ax)_q-(y_i)_q\log(Ax)_q\right).
\end{equation*}
\item[$\bullet$] (AA-type fidelity) From the maximum a posteriori (MAP) perspective, the Aubert--Aujol fidelity term \cite{aubert2008variational} is given by:
\begin{equation*}
\mathcal{D}^{AA}(x;y_i)=\sum_{q=1}^m \left(\frac{(y_i)_q}{(Ax)_q}+\log(Ax)_q\right). 
\end{equation*}
\end{itemize}

In the following subsections, we introduce the parameterized regularization term $R_\theta$ and the fixed-point map $f_\theta$ associated with the minimization of $F_\theta$ employed in this work.

\subsection{Parameterized explicit variational regularization}
A variety of model-based neural networks built upon implicit architectures have recently been proposed for imaging inverse problems \cite{kadkhodaie2021stochastic,gilton2021deep}. Although these implicit frameworks demonstrate promising empirical performance, they are often treated as black-box models and lack rigorous mathematical interpretability. To improve the interpretability of the proposed framework, we explicitly incorporate geometric priors into the network parameterization. Such geometric structures have been shown to play a fundamental role in network design and image reconstruction; see, e.g., \cite{ye2019curvature,li2025curvpnp}. Inspired by the image area and mean curvature regularizers in \cite{yang2025mixed}, we consider the following parameterized regularization model equipped with learnable influence functions and linear convolution kernels:
\begin{equation}
\begin{aligned}
R_\theta(x)
=&\sum_{c=1}^C\sum_{j=1}^n
\Bigg(
    \sum_{l=1}^{N_a}
    \left[\psi_l^a\left(k_l*x^{(c)}\right)\right]_j
    \left[\mathcal A\!\left(x^{(c)}\right)\right]_j
\\
&\qquad
+b
\sum_{l'=1}^{N_c}
\left[\psi_{l'}^c\!\left(\mathcal K\!\big(x^{(c)}\big)\right)\right]_j
\left[\mathcal A\!\left(x^{(c)}\right)\right]_j
\Bigg).
\end{aligned}
\label{eq:regular1}
\end{equation}
Here, $C$ denotes the number of image channels, $N_a$ and $N_c$ denote the numbers of learnable area- and curvature-related influence functions, respectively, $k_l$ are learnable convolution kernels, $*$ is the convolution operator, and $b$ is a learnable scalar. Let $D_h^+(D_h^-)$ and $D_v^+(D_v^-)$ denote the forward (backward) finite difference operators along the horizontal and vertical directions, respectively. Based on these operators, we define the following two geometric quantities associated with $x^{(c)}$.
\begin{enumerate}[label=(\roman*)]
\item The area-related geometric quantity is defined by
\begin{equation*}
\left[\mathcal{A}\left(x^{(c)}\right)\right]_j=\sqrt{\left[D^+_h x^{(c)}\right]_j^2+\left[D^+_v x^{(c)}\right]_j^2+\epsilon},
\end{equation*}
where $\epsilon$ is a learnable positive parameter.
\item The curvature-related geometric quantity is defined as
\begin{equation*}
\mathcal{K}\big(x^{(c)}\big)=D_h^-\frac{D^+_h x^{(c)}}{\mathcal{A}(x^{(c)})}+ D_v^-\frac{D^+_v x^{(c)}}{\mathcal{A}(x^{(c)})},
\end{equation*}
where the division is understood element-wise.
\end{enumerate}

The functions $\psi_l^a \colon \mathbb{R}^n \to \mathbb{R}^n$ and $\psi_{l^\prime}^c\colon \mathbb{R}^n \to \mathbb{R}^n$ denote learnable influence functions associated with the area and curvature terms, respectively. Since the parameterizations of the two influence functions are analogous, we only describe the construction of $\psi_l^a$.
Following the strategy of \cite{chen2016trainable}, the influence function $\psi_l^a$ 
acts separately on each component and is described using a radial basis function (RBF) expansion as follows
\begin{equation*}
\left[\psi_l^a(z)\right]_j=\sum_{k=1}^M\omega_{lk}^a\phi\left(\frac{|z_j-\mu_k|}{\gamma_k}\right),
\end{equation*}
where $\omega_{lk}^a$ are learnable coefficients, $\mu_k$ are uniformly spaced centers, and $\gamma_k$ are the corresponding scaling parameters. The learnable coefficients associated with $\psi_{l^\prime}^c$ are denoted by $\omega_{l^\prime k}^c$. The basis function $\phi$ is chosen as a Gaussian one,
\begin{equation*}
\phi\left(\frac{|z_j-\mu_k|}{\gamma_k}\right)=\exp\left(-\frac{\left(z_j-\mu_k\right)^2}{2\gamma_k^2}\right).
\end{equation*}
This choice guarantees the analyticity of the learned influence functions and the resulting regularizer $R_\theta$, which is required for the convergence analysis in Proposition \ref{pro:PsiKL}.

Finally, each convolution kernel is parameterized using a discrete cosine transform (DCT) basis \cite{chen2016trainable},
\begin{equation*}
k_l=\sum_{r=1}^{N_r}\frac{w_{lr}b_r}{\|w_{l}\|},
\end{equation*}
where $b_r \in \mathbb{R}^{N_r}$ are fixed DCT basis filters and $w_{l}$ is a vector of $N_r$ learnable coefficients. Under this parameterization, the resulting kernels naturally satisfy the unit-norm property and characterize the linear component of the regularization model, while the influence functions capture the corresponding nonlinear behavior. Consequently, the learnable parameter set $\theta=\{\omega_{lk}^a,\omega_{l^\prime k}^c,w_{lr},\epsilon,b\}$ endows the proposed regularization model with considerable flexibility and expressive power.

Notably, this flexibility is not merely qualitative: by appropriately specializing $\theta$, the regularizer $R_\theta$ recovers a range of classical variational models as special cases, indicating that it constitutes a genuine generalization rather than an ad hoc parameterization. To illustrate this property, we present several representative special cases in the single-channel setting with $N_a=N_c=1$.
\begin{itemize}
\item[$\bullet$]\textbf{Total Variation (TV).}

If 
\begin{equation*}
[\psi_l^a(\cdot)]_j\equiv 1,\ \forall j, \;\epsilon=0, \; b=0, 
\end{equation*}
then \eqref{eq:regular1} reduces to the classical total variation (TV) regularizer \cite{rudin2003multiplicative}.

\item[$\bullet$]\textbf{Minimal Surface regularization.}

If
\begin{equation*}
[\psi_l^a(\cdot)]_j\equiv 1, \  \forall j, \;\epsilon=1, \; b=0, 
\end{equation*}
then it becomes the minimal surface regularizer \cite{pang2018image}.
\item[$\bullet$]\textbf{MC-$L^p$.}

If
\begin{equation*}
[\psi_l^a(\cdot)]_j\equiv 0, \  \forall j, \;[\psi^c_{l^\prime}(z)]_j = |z_j|^p,\; \epsilon=1,
\end{equation*}
where $|\cdot|$ denotes the absolute value, with fixed constants $p$ and $b$, then \eqref{eq:regular1} reduces to the MC-$L^p$ regularizer proposed in \cite{zhu2020image}.
\item[$\bullet$]\textbf{Euler's elastica regularization.}

If
\begin{equation*}
k_l=G_\sigma,\quad
[\psi^a_l(z)]_j = \left(\frac{z_j}{\displaystyle \max_j z_j}\right)^p,\
[\psi^c_{l^\prime}(z)]_j = (z_j)^2,\
\epsilon=0,
\end{equation*}
where $G_\sigma$ denotes the Gaussian convolution kernel with standard deviation $\sigma$, and $b$ is a fixed constant, then \eqref{eq:regular1} reduces to the Euler's elastica regularizer proposed in \cite{zhang2022image}. With these choices, the first regularization term in \eqref{eq:regular1} provides an estimate of the gray level of the (degraded) image, which scales the intensity of the regularization according to the intensity of each pixel. This is a desirable effect when dealing with multiplicative noise. 
\item[$\bullet$]\textbf{Hybrid geometric regularization.}

If
\begin{equation*}
k_l=G_\sigma,\quad
[\psi^a_l(z)]_j = \left(\frac{z_j}{\displaystyle \max_j z_j}\right)^p,\
[\psi^c_{l^\prime}(z)]_j = (z_j)^2,\
\epsilon=1,
\end{equation*}
then the resulting regularizer coincides with the hybrid geometric regularization proposed in \cite{yang2025mixed}.
\end{itemize}
\subsection{Mirror descent algorithms for deep equilibrium models}
\label{ssec:MD_DEQ}
This section derives the fixed-point mapping $f_\theta$ associated with the minimization of the parametric energy $F_\theta$. In convex optimization, it is common to rely on iterative schemes to progressively refine an approximation of a minimizer: as a consequence, such minimizers can be seen as fixed points of the map employed to produce the next iterate given the previous one.
For the multiplicative Gamma noise restoration problems considered in this work, the AA-type fidelity $\mathcal{D}^{AA}(x;y_i)$ is not globally convex, while $\mathcal{D}^{AA}(x;y_i)$ and $\mathcal{D}^{KL}(x;y_i)$ both fail to satisfy the standard $L$-smoothness condition. Consequently, classical optimization methods, such as proximal gradient descent, cannot be directly applied with rigorous convergence guarantees. Motivated by \cite{daniele2026deep}, we adopt a mirror descent framework to address these difficulties. By introducing a Bregman potential function $h$, mirror descent extends the standard proximal gradient method and relaxes the conventional smoothness and convexity requirements imposed on the objective functional.

To present the mirror descent framework in a general setting, we consider the following nonconvex optimization problem:
\begin{equation*}
\inf_{x\in\overline{C}}\Psi(x):=f(x)+g(x),
\end{equation*}
where $C = \operatorname{int} \operatorname{dom} h$ and $h\in\mathcal{G}(C)$ is a Bregman potential satisfying the kernel generating distance property (see Definition \ref{def:kernelgere} for the precise definition). To establish the well-posedness of the subsequent algorithm in the considered nonconvex case, the following assumptions are needed (see \cite{bolte2018first}):
\begin{assumption}
\leavevmode\\[-0.8\baselineskip]
\begin{enumerate}[label=(\roman*),font=\normalfont]
\item $h\in\mathcal{G}(C)$ and $\overline{C}=\overline{\operatorname{dom} h}$.
\item $f:\mathbb{R}^n\to(-\infty,+\infty]$ is proper, lower semicontinuous and convex, with
$\operatorname{dom} f\cap C\neq\emptyset$.
\item $g:\mathbb{R}^n\to(-\infty,+\infty]$ is proper and lower semicontinuous such that
$\operatorname{dom} h\subseteq \operatorname{dom} g$. Moreover, $g$ is continuously differentiable on $C$.
\item The objective function $\Psi$ is bounded below on $\overline{C}$, i.e.,
\[
\inf_{x\in\overline{C}}\Psi(x)>-\infty .
\]
\item For any $\theta>0$, the function $h(x)+\theta f(x)$ is supercoercive, i.e.,
\[
\lim_{\|x\|\rightarrow +\infty}\frac{h(x)+\theta f(x)}{\|x\|}=+\infty.
\]
\end{enumerate}
\label{ass:fandg}
\end{assumption}

Starting from an initialization $x^0\in\operatorname{int}\left(\operatorname{dom}h\right)$, the mirror descent iteration is recursively updated by
\begin{equation}
x^{k+1}=\underset{x \in \mathbb{R}^n}{\arg\min}\Big\{\Psi_h(x):=f(x)+\langle x-x^k,\nabla g(x^k)\rangle+\frac{1}{\tau}D_h(x,x^k)\Big\}.
\label{eq:xkupdate}
\end{equation}
Here, $\tau>0$ denotes the step size and $D_h$ is the Bregman distance induced by $h$; see Definition \ref{def:bregmandistance} for its definition. The update can be interpreted as minimizing a local model obtained by linearizing the differentiable component of the objective fuction $g$ at the current iterate and regularizing the resulting subproblem through the non-Euclidean metric $D_h$.
\remark When $h=\|\cdot\|^2$, the above scheme reduces to the classical proximal gradient method. Hence, mirror descent may be viewed as a non-Euclidean generalization of proximal gradient descent \cite{bauschke2017descent}.
\remark Although the objective function $\Psi(x)=f(x)+g(x)$ is generally nonconvex due to the nonconvexity of $g$, the local surrogate function $\Psi_h$ arising from the linearization of $g$ becomes convex under Assumption \ref{ass:fandg}. Consequently, the resulting update mapping from $\operatorname{int}(\operatorname{dom}h)$ to itself is well defined and single valued.

Returning to the restoration problem considered in this work, we set $f$ as the indicator function $\iota_{[0,a]^n}(x)$, while $g$ consists of the parameterized regularization network $\lambda R_\theta(x)$ and the data-fidelity term $\mathcal{D}(x;y)$, with domain $C=(0,+\infty)$. Consequently, the generic objective $\Psi$ introduced above reduces to the parameterized functional $F_\theta$ employed in the proposed DEQ model. Under Assumption \ref{ass:fandg}, iteration \eqref{eq:xkupdate} admits the following closed-form expression as the lower-level subproblem of the DEQ model:
\begin{equation}
x^{k+1}=\operatorname{Prox}_{\tau \iota_{[0,a]^n}}^h\left(\nabla h^*\left(\nabla h(x^k)-\tau\left(\lambda\nabla R_\theta(x^k)+\nabla \mathcal{D}(x^k;y)\right)\right)\right)
\label{eq:MD}
\end{equation}
where $h^*$ denotes the Fenchel conjugate of $h$ \cite{rockafellar1997convex}. The choice of the Bregman potential $h$ depends on the specific form of the fidelity term. For the KL-type fidelity, a natural choice is Burg's entropy \cite{bauschke2017descent},
\begin{equation*}
h^{KL}(x)=-\sum_{j=1}^n\log(x_j).
\end{equation*}
which inherently enforces the positivity constraint on the solution. In contrast, for the AA-type fidelity term, this choice becomes unsuitable since it fails to satisfy the smoothness conditions required in the subsequent convergence analysis; see Lemma \ref{lema:1divxLhg}. In this case, we instead choose the potential function
\begin{equation*}
h^{AA}(x)=\sum_{j=1}^n\frac{1}{x_j}.
\end{equation*}
Although this potential $h^{AA}(x)$ does not explicitly preserve positivity, the constraint is still effectively enforced through the indicator function $\iota$ associated with the admissible set $[0,a]^n$, making it an appropriate choice within the proposed mirror descent framework.

As a particular instance of the Bregman proximal gradient framework, the Bregman proximal step in mirror descent is generally not equivalent to an Euclidean projection. Nevertheless, under the specific choices of the potential functions $h^{KL}(x)$ and $h^{AA}(x)$, corresponding respectively to the KL-type and AA-type fidelity terms, this equivalence still holds.
\begin{proposition} \label{prop:projection}
Let $h(x)=\sum_{j=1}^n x_j^{-1}$. Then, for any $x\in\mathbb{R}_{++}^n$, the Bregman proximal operator associated with the indicator function $\iota_{[0,a]^n}$ coincides with the Euclidean projection onto $[0,a]^n$.
\end{proposition}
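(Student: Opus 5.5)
The plan is to compute the Bregman proximal operator explicitly and check that it equals componentwise clipping. By Definition \ref{def:bregmandistance}, for $x\in\mathbb{R}^n_{++}$ we have
\begin{equation*}
\operatorname{Prox}^h_{\tau\iota_{[0,a]^n}}(x)=\underset{u\in\mathbb{R}^n}{\arg\min}\Big\{\iota_{[0,a]^n}(u)+\tfrac{1}{\tau}D_h(u,x)\Big\}=\underset{u\in[0,a]^n}{\arg\min}\, D_h(u,x).
\end{equation*}
The factor $1/\tau$ and the indicator's scaling play no role. Since $h(u)=\sum_j u_j^{-1}$ is separable, $D_h(u,x)=\sum_{j=1}^n d(u_j,x_j)$ with
\begin{equation*}
d(s,t)=\frac1s-\frac1t+\frac{s-t}{t^2},\qquad s,t>0 .
\end{equation*}
We set $d(s,t)=+\infty$ for $s\le 0$, since $s\notin\operatorname{dom}h$ there. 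The box $[0,a]^n$ is also a product, so the minimization decouples into $n$ scalar problems $\min_{s\in[0,a]} d(s,x_j)$. It therefore suffices to show that each scalar minimizer is $\min\{x_j,a\}$. This equals the Euclidean projection $\min\{\max\{x_j,0\},a\}$ because $x_j>0$.

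For the scalar problem, fix $t=x_j>0$. The function $s\mapsto d(s,t)$ is strictly convex on $(0,\infty)$, since $\partial_s^2 d=2/s^3>0$. Its derivative is $\partial_s d(s,t)=-1/s^2+1/t^2$, which vanishes only at $s=t$, is negative for $s<t$, and is positive for $s>t$. Also $d(s,t)\to+\infty$ as $s\downarrow 0$, so the endpoint $s=0$ is never a minimizer. Two cases follow. If $t\le a$, the unconstrained minimizer $s=t$ is feasible and hence is the unique constrained minimizer. If $t>a$, then $d(\cdot,t)$ is strictly decreasing on $(0,a]$, so the minimum over $[0,a]$ is attained uniquely at $s=a$. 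In both cases the minimizer is $\min\{t,a\}=P_{[0,a]}(t)$.

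Reassembling the components gives $\operatorname{Prox}^h_{\tau\iota_{[0,a]^n}}(x)=P_{[0,a]^n}(x)$ for all $x\in\mathbb{R}^n_{++}$. The only delicate point is the boundary $s=0$, where $h$ is not defined. I will handle it through the convention $D_h(u,x)=+\infty$ for $u\notin\operatorname{dom}h$, together with the blow-up $d(s,t)\to+\infty$ as $s\downarrow0$. This also confirms that the output stays in $\operatorname{int}\operatorname{dom}h$, as required by the well-posedness remark following \eqref{eq:xkupdate}.
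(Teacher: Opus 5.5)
Your proposal is correct and follows essentially the same route as the paper: both use separability of $D_h$ and of the box, the scalar derivative $\partial_s d(s,t)=(s^2-t^2)/(s^2t^2)$, and monotonicity on either side of $s=t$ to obtain $\min\{x_j,a\}$. You also treat explicitly two details the paper's proof passes over, namely the case $x_j>a$ and the boundary point $s=0$, where $d(s,t)\to+\infty$.
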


\begin{proof}
The Bregman proximal operator associated with $\iota_{[0,a]^n}$ is defined by
\begin{equation}
\begin{aligned}
\mathrm{Prox}_{\tau\iota_{[0,a]^n}}^h(x)
&=\arg\min_{u\in\mathbb{R}^n}
\left\{
\tau\iota_{[0,a]^n}(u)+D_h(u,x)
\right\} \\
&=\arg\min_{u\in\mathbb{R}^n}
\sum_{j=1}^n
\left[
\tau\iota_{[0,a]}(u_j)
+\left(
\frac{1}{u_j}
-\frac{1}{x_j}
+\frac{u_j-x_j}{x_j^2}
\right)
\right].
\end{aligned}
\label{eq:prb_Proj_Prox}
\end{equation}
Since the objective function is separable, the minimization decouples componentwise. Define
\[
\tilde{D}_h(\tilde{u},\tilde{x})
=
\frac{1}{\tilde{u}}
-\frac{1}{\tilde{x}}
+\frac{\tilde{u}-\tilde{x}}{\tilde{x}^2},
\qquad \tilde{x}>0.
\]
Then \eqref{eq:prb_Proj_Prox} reduces to
\[
\mathrm{Prox}_{\tau\iota_{[0,a]^n}}^h(x)
=
\left(
\mathrm{Prox}_{\tau\iota_{[0,a]}}^h(x_1),
\ldots,
\mathrm{Prox}_{\tau\iota_{[0,a]}}^h(x_n)
\right),
\]
where
\[
\mathrm{Prox}_{\tau\iota_{[0,a]}}^h(x_j)
=
\arg\min_{0\le u_j\le a}
\tilde{D}_h(u_j,x_j).
\]
Differentiating $\tilde{D}_h$ with respect to $\tilde{u}$ yields
\[
\partial_{\tilde{u}}\tilde{D}_h(\tilde{u},\tilde{x})
=
\frac{\tilde{u}^2-\tilde{x}^2}
{\tilde{u}^2\tilde{x}^2}.
\]
Hence, $\tilde{D}_h(\cdot,\tilde{x})$ is strictly decreasing on $(0,\tilde{x})$ and strictly increasing on $(\tilde{x},\infty)$, so its unique minimizer is attained at $\tilde{u}=\tilde{x}$. Therefore,
\[
\mathrm{Prox}_{\tau\iota_{[0,a]}}^h(x_j)
=
\min\{x_j,a\},
\]
which is precisely the Euclidean projection onto $[0,a]$. Consequently,
\[
\mathrm{Prox}_{\tau\iota_{[0,a]^n}}^h(x)
=
\Pi_{[0,a]^n}(x),
\]
where $\Pi_{[0,a]^n}$ denotes the Euclidean projection onto $[0,a]^n$.
$\hfill\Box$
\end{proof}

We conclude this section by presenting the explicit mirror descent updates for image restoration under multiplicative Gamma noise. The resulting closed-form iterations are derived for the two fidelity terms most commonly employed in applications.

For the KL-type fidelity term, the corresponding update is given by
\begin{equation}
x^{k+1}=\Pi_{[0,a]^n}\left(\frac{x^k}{1+\tau x^k\left(\lambda\nabla R_\theta(x^k)+\nabla \mathcal{D}^{KL}(x^k;y)\right)}\right).
\label{eq:MD_KL}
\end{equation}

For the AA-type fidelity term, the mirror descent iteration takes the form
\begin{equation}
x^{k+1}=\Pi_{[0,a]^n}\left(\frac{x^k}{\sqrt{1+\tau (x^k)^2\left(\lambda\nabla R_\theta(x^k)+\nabla \mathcal{D}^{AA}(x^k;y)\right)}}\right).
\label{eq:MD_AA}
\end{equation}
These closed-form expressions constitute the computational core of the proposed method and will serve as the forward operator in the subsequent DEQ framework.

\begin{remark}
The update formulas \eqref{eq:MD_KL} and \eqref{eq:MD_AA} are derived under the assumption that the argument of the projection operator $\Pi_{[0,a]^n}$ belongs to $\mathbb{R}^n_{++}$. To ensure that this assumption is satisfied throughout the iterative process, we introduce in Section \ref{sec:ForBack} a backtracking strategy that preserves the positivity of the projection argument.
\end{remark}

\section{Convergence analysis of mirror descent with smooth-adaptable functionals}\label{sec:Cov_DEQ}
In this section, we establish the convergence of the mirror descent iterations to a critical point of the objective functional $\Psi$, where $\Psi=F_\theta$. A common assumption in the convergence analysis of first-order optimization methods is the Lipschitz continuity of the gradient; see, e.g., \cite{attouch2013convergence,bolte2014proximal}. However, this assumption is violated by both the AA-type and KL-type data-fidelity terms arising in multiplicative Gamma noise restoration. 
Moreover, the objective functional $\Psi$ is nonconvex, so classical convex optimization theory is not applicable. To overcome these difficulties, we follow the analytical framework of \cite{bolte2018first,daniele2026deep} and establish the convergence of the mirror descent scheme \eqref{eq:MD} under weaker assumptions satisfied by the proposed model. Specifically, the classical Lipschitz smoothness requirement is replaced by the notion of L-smooth adaptability with respect to the Bregman potential $h$, while the convexity assumption is replaced by the Kurdyka--\L{}ojasiewicz property.

Since the convergence analysis for the KL-type fidelity follows directly from the arguments in \cite{daniele2026deep}, we restrict our attention to the AA-type fidelity, which requires additional technical developments.

We begin by recalling the notion of L-smooth adaptability ($L$-smad).
\definition Let $h\in\mathcal{G}(C)$ with $C=\operatorname{int}(\operatorname{dom}h)$, and let $g:\mathbb{R}^n\rightarrow (-\infty,+\infty]$ be a proper lower semicontinuous function such that $\operatorname{dom}h\subset\operatorname{dom}g$. Assume that $g$ is continuously differentiable on $C$. The pair $(g,h)$ is said to satisfy the L-smooth adaptability ($L$-smad) condition on $C$ if:

\centering{there exists $L>0$ such that $Lh-g$ is convex on $C$.}
\vspace{0.2cm}
\begin{remark}
Strictly speaking, the complete $L$-smad condition additionally requires $Lh+g$ to be convex. However, for the convergence analysis considered here, it is sufficient to assume only the convexity of $Lh-g$ \cite{bolte2018first}. 
\end{remark}

\justifying
Let $a_q \in\mathbb{R}_+^n$ denote the $q$-th row of the forward operator $A \in \mathbb{R}^{m \times n}$. Assume that $a_q\neq \boldsymbol{0}$ for all $q=1, \ldots, m$. Then, for every $x\in\mathbb{R}_{++}^n$, 
\[
\langle a_q,x\rangle>0, \quad q=1,\ldots,m.
\]
This is a standard assumption for blur operators in imaging inverse problems; see, e.g., \cite{dong2013convex,bertero2009image}. Under this condition, one can derive a mild sufficient criterion ensuring the $L$-smad property for the AA-type fidelity term.
\begin{lemma}\label{lema:1divxLhg}
Let $g(x)=\mathcal{D}^{AA}(x;y)$ and $h^{AA}(x)=\sum_{j=1}^n\frac{1}{x_j}$. Then, for any
\begin{equation*} 
L\geq M\sum_{q=1}^m y_q,
\end{equation*}
where 
\begin{equation*}
M=\max_{1\leq q\leq m}\frac{\max_{1\leq j\leq n}\{a_{qj}^2\}}{\min_{1\leq j\leq n}\{a_{qj}^3,a_{qj}>0\}},
\end{equation*}
the function $Lh^{AA}-g$ is convex on $\mathbb{R}_{++}^n$.
\end{lemma}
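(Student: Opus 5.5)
The plan is to verify convexity of $Lh^{AA}-g$ on the open convex set $\mathbb{R}^n_{++}$ through the second-order criterion. Both functions are $C^2$ there, since $\langle a_q,x\rangle>0$ for every $x\in\mathbb{R}^n_{++}$ by the standing assumption on the rows of $A$. It therefore suffices to show that $v^\top\big(L\nabla^2h^{AA}(x)-\nabla^2 g(x)\big)v\ge 0$ for all $x\in\mathbb{R}^n_{++}$ and $v\in\mathbb{R}^n$.

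\textbf{Step 1 (Hessians).} A direct computation gives $\nabla^2 h^{AA}(x)=\operatorname{diag}(2/x_j^3)$. Writing $s_q=\langle a_q,x\rangle$, the fidelity summand $y_q/s_q+\log s_q$ has Hessian
\[
\Big(\frac{2y_q}{s_q^3}-\frac{1}{s_q^2}\Big)a_qa_q^\top .
\]
The term $-\frac{1}{s_q^2}a_qa_q^\top$ is negative semidefinite, so it only helps: $-\nabla^2 g$ picks up a positive semidefinite piece from it. We may therefore drop it, and it remains to prove
\[
\sum_{q=1}^m \frac{2y_q\langle a_q,v\rangle^2}{s_q^3}\;\le\; 2L\sum_{j=1}^n\frac{v_j^2}{x_j^3}.
\]

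\textbf{Step 2 (weighted Cauchy--Schwarz, the key step).} For each $q$, let $J_q=\{j: a_{qj}>0\}$ denote the support of $a_q$. Split $a_{qj}v_j=\sqrt{a_{qj}x_j^3}\cdot\sqrt{a_{qj}}\,v_j x_j^{-3/2}$. Cauchy--Schwarz then gives
\[
\langle a_q,v\rangle^2\le\Big(\sum_{j\in J_q}a_{qj}x_j^3\Big)\Big(\sum_{j\in J_q}a_{qj}\frac{v_j^2}{x_j^3}\Big).
\]
Next we control the ratio $\sum_{j\in J_q} a_{qj}x_j^3/s_q^3$. By nonnegativity of all terms, $s_q^3=\big(\sum_{j\in J_q}a_{qj}x_j\big)^3\ge\sum_{j\in J_q}a_{qj}^3x_j^3$. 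Also $a_{qj}^3\ge \min_{J_q}a_{qj}^3\,\frac{a_{qj}}{\max_j a_{qj}}$ on $J_q$. Combining these gives
\[
\sum_{j\in J_q}a_{qj}x_j^3\le\frac{\max_j a_{qj}}{\min_{J_q}a_{qj}^3}\,s_q^3 .
\]

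\textbf{Step 3 (assembly).} Inserting the bound from Step 2 and using $a_{qj}\le\max_j a_{qj}$ in the remaining factor, each summand becomes
\[
\frac{y_q\langle a_q,v\rangle^2}{s_q^3}\le y_q\,\frac{\max_j a_{qj}^2}{\min_{J_q}a_{qj}^3}\sum_{j=1}^n\frac{v_j^2}{x_j^3}\le M\,y_q\sum_{j=1}^n\frac{v_j^2}{x_j^3}.
\]
Summing over $q$ yields the required inequality whenever $L\ge M\sum_q y_q$. Here we use $y_q\ge0$, which holds for Gamma-noise observations.

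The main obstacle is Step 2: choosing the right Cauchy--Schwarz weighting so that the $x$-dependence cancels uniformly, leaving a constant that depends only on $A$. The elementary cube inequality $(\sum t_j)^3\ge\sum t_j^3$ is what makes this cancellation work, and it is what produces the precise form of $M$.
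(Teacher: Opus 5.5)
Your proof is correct and follows essentially the paper's route: compare Hessians, discard the negative semidefinite $-a_qa_q^\top/s_q^2$ term, apply a weighted Cauchy--Schwarz inequality, and use the cube inequality $(\sum_j t_j)^3\ge\sum_j t_j^3$ on the support $J_q$ to obtain the $x$-independent constant $M_q$. The only difference is minor: the paper puts all of $a_{qj}$ on the $x$-side of Cauchy--Schwarz and bounds $\sum_j a_{qj}^2x_j^3/s_q^3$ directly by $\max_j a_{qj}^2/\min_{J_q}a_{qj}^3$, whereas your split $a_{qj}=\sqrt{a_{qj}}\sqrt{a_{qj}}$ needs the extra elementary inequality $a_{qj}^3\ge \min_{J_q}a_{qj}^3\,a_{qj}/\max_j a_{qj}$; your explicit restriction to $J_q$ and your remark that $y_q\ge 0$ make the ratio estimate cleaner than the paper's displayed version.
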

\begin{proof}
Since $g$ and $h^{AA}$ are $\mathcal{C}^2$ on $\mathbb{R}_{++}^n$, it suffices to find a constant $L>0$ such that, for any $x\in\mathbb{R}_{++}^n$ and any direction $d\in\mathbb{R}^n$,
\begin{equation*}
L\langle\nabla^2 h^{AA}(x)d,d\rangle \geq \langle\nabla^2 g(x)d,d\rangle.
\end{equation*}
Firstly, it is straightforward to verify that, for any $d\in\mathbb{R}^n$,
\begin{equation*}
L\langle\nabla^2 h^{AA}(x)d,d\rangle=2L\sum_{j=1}^n\frac{d_j^2}{x_j^3}.
\end{equation*}
On the other hand, by the definition of $g$, we have
\begin{equation*}
\nabla g(x)=\sum_{q=1}^m\frac{1}{\langle a_q,x\rangle}a_q-\frac{y_q}{\langle a_q,x\rangle^2}a_q,
\end{equation*}
and
\begin{equation*}
\langle\nabla^2 g(x)d,d\rangle=\sum_{q=1}^m-\frac{\langle a_q,d\rangle^2}{\langle a_q,x\rangle^2}+\frac{2y_q\langle a_q,d\rangle^2}{\langle a_q,x\rangle^3}\leq \sum_{q=1}^m 2y_q\frac{\langle a_q,d\rangle^2}{\langle a_q,x\rangle^3}.
\end{equation*}
By the Cauchy-Schwarz inequality and the assumption that $a_q\in\mathbb{R}_+^n$ with $a_q\neq \boldsymbol0$ for all $q=1,\ldots,m$, we obtain
\begin{equation*}
\frac{\langle a_q,d\rangle^2}{\langle a_q,x\rangle^3}\leq\frac{\sum_{j=1}^na_{qj}^2x_j^3}{\langle a_q,x\rangle^3}\sum_{j=1}^n\frac{d_j^2}{x_j^3}.
\end{equation*}
Let $M_q=\frac{\max_{j}\{a_{qj}^2\}}{\min_j\{a_{qj}^3:a_{qj}>0\}}$. Then it follows that
\begin{equation*}
\frac{\sum_{j=1}^na_{qj}^2x_j^3}{\langle a_q,x\rangle^3}\leq \frac{\max_{j}\{a_{qj}^2\}\sum_{j=1}^nx_j^3}{\min_j\{a_{qj}^3:a_{qj}>0\}\sum_{j=1}^n(x_j)}\leq M_q.
\end{equation*}
Consequently, we have
\begin{equation*}
\begin{aligned}
\langle\nabla^2 g(x)d,d\rangle\leq& \sum_{q=1}^m2y_q\frac{\langle a_q,d\rangle^2}{\langle a_q,x\rangle^3}\\
\leq&\sum_{q=1}^m 2y_qM_q\left(\sum_{j=1}^n\frac{d_j^2}{x_j^3}\right)\\
\leq&M\left(\sum_{q=1}^m 2y_q\right)\left(\sum_{j=1}^n\frac{d_j^2}{x_j^3}\right),
\end{aligned}
\end{equation*}
where $M=\max_{q\in\{1,\ldots,m\}}M_q>0$ is a constant that depends only on the matrix $A$ and is independent of $x$ and $d$.

Therefore it suffices to choose
\begin{equation*}
L\geq M\sum_{q=1}^m y_q,
\end{equation*}
which guarantees the convexity of $Lh-g$.
$\hfill\Box$
\end{proof}
\begin{remark}
Burg's entropy is the standard choice of Bregman potential for the KL-type fidelity and plays a central role in the existing convergence theory. It is therefore natural to consider using the same potential for the AA-type fidelity. Nevertheless, the corresponding $L$-smad condition generally fails because the AA-type fidelity possesses a stronger singularity near the origin, induced by its $1/x$-type structure. The proposed potential can thus be viewed as an extension of Burg's entropy tailored to the stronger singular behavior of the AA-type fidelity, while retaining the analytical properties required for our convergence analysis.
\end{remark}

For the KL-type fidelity term, the $L$-smad condition associated with the Bregman potential $h^{KL}$ admits a considerably simpler characterization than in the AA case. In fact, the corresponding result is classical; see \cite[Lemma 7]{bauschke2017descent}, which provides an explicit condition on $L$ ensuring the convexity of $Lh-g$.

\begin{lemma}\label{lema:BurgLhg}
Let $g(x)=\mathcal{D}^{KL}(x;y)$, and let $h^{KL}(x)$ denote the Burg's entropy. If the constant $L$ satisfies
\begin{equation*}
L\geq \sum_{q=1}^m y_q,
\end{equation*}
then the function $Lh-g$ is convex on $\mathbb{R}^n_{++}$.
\end{lemma}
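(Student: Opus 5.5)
Following the same second-order strategy as in the proof of Lemma \ref{lema:1divxLhg}, we compare the quadratic forms of the two Hessians. Since both $g=\mathcal{D}^{KL}(\cdot;y)$ and $h^{KL}$ are $\mathcal{C}^2$ on $\mathbb{R}^n_{++}$, it suffices to show that for every $x\in\mathbb{R}^n_{++}$ and $d\in\mathbb{R}^n$,
\begin{equation*}
\langle\nabla^2 g(x)d,d\rangle\le L\langle\nabla^2 h^{KL}(x)d,d\rangle = L\sum_{j=1}^n\frac{d_j^2}{x_j^2}.
\end{equation*}
The first step is the direct computation
\begin{equation*}
\nabla g(x)=\sum_{q=1}^m\Big(1-\frac{y_q}{\langle a_q,x\rangle}\Big)a_q,\qquad
\langle\nabla^2 g(x)d,d\rangle=\sum_{q=1}^m y_q\frac{\langle a_q,d\rangle^2}{\langle a_q,x\rangle^2}.
\end{equation*}
The linear part $\langle a_q,x\rangle$ contributes nothing to the Hessian. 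Positivity of $\langle a_q,x\rangle$ is guaranteed by the standing assumption $a_q\in\mathbb{R}^n_+\setminus\{\boldsymbol 0\}$.

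The key step is a scale-invariant bound on each summand:
\begin{equation*}
\frac{\langle a_q,d\rangle^2}{\langle a_q,x\rangle^2}\le\sum_{j=1}^n\frac{d_j^2}{x_j^2}.
\end{equation*}
To obtain it, write $\langle a_q,d\rangle=\sum_j (a_{qj}x_j)(d_j/x_j)$. Setting $\pi_j=a_{qj}x_j/\langle a_q,x\rangle$, the weights $\pi_j$ are nonnegative and sum to one. Then $\langle a_q,d\rangle/\langle a_q,x\rangle=\sum_j\pi_j (d_j/x_j)$ is a convex combination of the ratios $d_j/x_j$. Jensen's inequality (equivalently, Cauchy--Schwarz with weights $\pi_j$) gives
\begin{equation*}
\Big(\sum_j\pi_j\frac{d_j}{x_j}\Big)^2\le\sum_j\pi_j\frac{d_j^2}{x_j^2}\le\sum_j\frac{d_j^2}{x_j^2},
\end{equation*}
where the last inequality uses $\pi_j\le 1$.

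Summing over $q$ with the nonnegative weights $y_q$ then yields
\begin{equation*}
\langle\nabla^2 g(x)d,d\rangle\le\Big(\sum_q y_q\Big)\sum_j d_j^2/x_j^2,
\end{equation*}
so any $L\ge\sum_q y_q$ makes $\nabla^2(Lh^{KL}-g)(x)$ positive semidefinite on the convex open set $\mathbb{R}^n_{++}$. Convexity of $Lh^{KL}-g$ follows.

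The only delicate point is the key step. One must use the convex-combination (Jensen) form rather than a crude Cauchy--Schwarz bound, since the latter would introduce an $A$-dependent constant like the $M$ of Lemma \ref{lema:1divxLhg}. Here the homogeneity degrees match: both $\langle a_q,d\rangle^2/\langle a_q,x\rangle^2$ and $d_j^2/x_j^2$ are homogeneous of degree zero. This matching is what lets the bound come out free of any dependence on $A$, as in \cite[Lemma 7]{bauschke2017descent}. Nonnegativity of $y$ is also needed, which holds since the observations under Gamma noise are positive.
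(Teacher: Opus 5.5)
Your proof is correct and is essentially the argument of \cite[Lemma 7]{bauschke2017descent}, which the paper cites in place of giving its own proof: compute $\langle\nabla^2 g(x)d,d\rangle=\sum_q y_q\langle a_q,d\rangle^2/\langle a_q,x\rangle^2$, bound each ratio by Jensen's inequality with the weights $a_{qj}x_j/\langle a_q,x\rangle$, and compare with $\nabla^2 h^{KL}(x)=\operatorname{diag}(1/x_j^2)$. You also state explicitly the hypotheses $a_q\in\mathbb{R}^n_+\setminus\{\boldsymbol 0\}$ and $y_q\ge 0$, which the citation leaves implicit.
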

Based on Lemmas \ref{lema:1divxLhg} and \ref{lema:BurgLhg}, establishing the $L$-smad property for the objective function $\mathcal{D}(x;y)+\lambda R_\theta$ reduces to verifying 
it for the pair
$(\lambda R_\theta,h)$,
since the sum of convex functions remains convex. As observed in \cite{hurault2023convergent}, verifying the $L$-smad condition over the entire domain $\operatorname{int}(\operatorname{dom}h)$ is generally difficult. A practical simplification is to restrict the analysis to a compact subset of $\operatorname{conv}(\operatorname{dom}R_\theta)\cap\operatorname{int}(\operatorname{dom}h)$. 

Motivated by this observation, we introduce the indicator function $\iota_{[0,a]^n}$ to constrain the admissible set to $[0,a]^n$. Such a restriction is natural in imaging applications, where image intensities are typically normalized to ranges such as $[0,1]^n$ or $[0,255]^n$. Under this bounded-domain setting, the selection rule for the constant $L^\prime$ ensuring the $L$-smad property of $(\lambda R_\theta,h)$ can be provided.
\begin{lemma}[$L$-smad property of $(\lambda R_\theta,h)$]\label{lemma:rtheta_smooth}
Let $R_\theta:(0,a]^n\rightarrow\mathbb{R}$ be a real analytic function and let
$
h(x)=\sum_{j=1}^n\frac{1}{x_j}$.
Then there exists a constant $L^\prime>0$ such that the pair $(\lambda R_\theta,h)$ satisfies the $L$-smad property on $(0,a]^n$. In particular, letting $\rho(\cdot)$ denote the spectral radius, $M:=\sup_{x\in(0,a]^n}\rho\!\left(\nabla^2R_\theta(x)\right)<\infty$,
then any constant
$
L^\prime\geq \frac{a^3}{2}\lambda M
$
is sufficient.
\end{lemma}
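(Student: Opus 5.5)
Following the proof of Lemma \ref{lema:1divxLhg}, I will prove convexity of $L'h-\lambda R_\theta$ on $(0,a]^n$ through a Hessian comparison. Both $h$ and $R_\theta$ are $\mathcal{C}^2$ on $\mathbb{R}^n_{++}$: $R_\theta$ by real analyticity, $h$ explicitly. It therefore suffices to show that for every $x\in(0,a]^n$ (a convex set) and every direction $d\in\mathbb{R}^n$,
\begin{equation*}
L'\langle\nabla^2 h(x)d,d\rangle-\lambda\langle\nabla^2R_\theta(x)d,d\rangle\geq 0 .
\end{equation*}
A $\mathcal{C}^2$ function whose Hessian is positive semidefinite on a convex set with nonempty interior is convex there. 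On the relative boundary face $x_j=a$, this follows by continuity, or by checking on the open set and extending.

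The computation itself is short. First, $\nabla^2 h(x)=\mathrm{diag}(2/x_j^3)$. Since $0<x_j\le a$, this gives $\langle\nabla^2h(x)d,d\rangle=2\sum_j d_j^2/x_j^3\geq \frac{2}{a^3}\|d\|^2$. Second, $\nabla^2R_\theta(x)$ is symmetric, so $\langle\nabla^2R_\theta(x)d,d\rangle\le\rho(\nabla^2R_\theta(x))\|d\|^2\le M\|d\|^2$. Combining the two bounds, the target quantity is at least $\left(\frac{2L'}{a^3}-\lambda M\right)\|d\|^2$, which is nonnegative exactly when $L'\ge \frac{a^3}{2}\lambda M$. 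This proves the quantitative part.

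The main obstacle is showing $M<\infty$, since $(0,a]^n$ is not compact and analyticity on it alone does not bound the Hessian near $x_j\to0$. I would not rely on analyticity here. Instead I would use the explicit structure \eqref{eq:regular1}. The maps $x\mapsto D^\pm x$ are linear. $\mathcal{A}(x)\ge\sqrt{\epsilon}>0$ is smooth in $x$ on all of $\mathbb{R}^n$, so $\mathcal{K}$ is smooth on all of $\mathbb{R}^n$. The Gaussian RBF influence functions are entire. Hence $R_\theta$ is in fact real analytic on an open neighbourhood of the compact cube $[0,a]^n$. Its Hessian is then continuous on $[0,a]^n$, and the supremum of the spectral radius over $(0,a]^n$ is bounded by the maximum over the compact closure. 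For a general analytic $R_\theta$ given only on $(0,a]^n$, I would state this extension to a neighbourhood of $[0,a]^n$ as the needed hypothesis, which the learned regularizer satisfies because $\epsilon>0$. Existence of $L'>0$ follows; if $M=0$, take any $L'>0$.
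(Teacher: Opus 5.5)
Your proof is correct and follows the paper's argument. It uses the same Hessian comparison, with $\nabla^2 h(x)\succeq \frac{2}{a^3}I$ on $(0,a]^n$ and $\nabla^2 R_\theta(x)\preceq M I$, which yields $L'\ge \frac{a^3}{2}\lambda M$. Your handling of $M<\infty$ is more careful than the paper's: the paper infers a bounded Hessian from analyticity on the bounded but non-compact set $(0,a]^n$, which fails in general (e.g.\ $R(x)=1/x$). You instead observe that $\epsilon>0$ and the entire Gaussian RBFs make $R_\theta$ analytic on all of $\mathbb{R}^n$, so its Hessian is continuous and hence bounded on the compact cube $[0,a]^n$; this is what actually closes that step.
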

\begin{proof}
Since both $R_\theta$ and $h$ are both $\mathcal{C}^2$-functions on their domains, the $L$-smad property of the pair $(\lambda R_\theta,h)$ can be characterized through their Hessian matrices. In particular, it suffices to show that there exists $L^\prime>0$ such that \cite{bauschke2017descent}
\begin{equation*}
L^\prime\nabla^2h(x)-\lambda\nabla^2R_\theta(x)\succeq 0, \quad\forall x\in(0,a]^n.
\end{equation*}
where $\succeq 0$ denotes positive semidefiniteness. Equivalently, one seeks $L^\prime>0$ satisfying
\begin{equation*}
\inf_{x\in(0,a]^n}\lambda_{\min}\left(L^\prime\nabla^2h(x)-\lambda\nabla^2R_\theta(x)\right)\geq 0,
\end{equation*}
with $\lambda_{\min}(\cdot)$ the minimum eigenvalue. Therefore, following the argument of \cite[Lemma B.4]{daniele2026deep}, it remains to establish a positive lower bound for the eigenvalues of $\nabla^2h(x)$ and a uniform upper bound for those of $\nabla^2R_\theta(x)$ on $(0,a]^n$.

Since $R_\theta$ is analytic, it is Lipschitz smooth on the bounded set $(0,a]^n\subset [0,a]^n$. Consequently, there exists a constant $M>0$ such that
\begin{equation*}
\sup_{x\in(0,a]^n}\rho\left(\nabla^2R_\theta(x)\right)\leq M,
\end{equation*}
where $\rho(\cdot)$ denotes the spectral radius. On the other hand, consider the potential function $h(x)=h^{AA}(x)=\sum_{j=1}^n\frac{1}{x_j}$. For $x\in(0,a]^n$, we have
\begin{equation*}
\nabla^2h(x)=2\text{diag}\left(\frac{1}{x^3}\right), \quad \text{with}\; \frac{1}{x^3}=\left(\frac{1}{x_1^3},\ldots,\frac{1}{x_n^3}\right).
\end{equation*}
Hence, the Hessian matrix of $h$ satisfies
\begin{equation*}
\inf_{x\in(0,a]^n}\lambda_{\min}\left(\nabla^2h(x)\right)\geq\inf_{x\in(0,a]}\frac{2}{x^3}\geq \frac{2}{a^3}.
\end{equation*}
Therefore, whenever $L^\prime\geq \frac{a^3}{2}\lambda M>0$, we obtain
\begin{equation*}
\inf_{x\in(0,a]^n}\lambda_{\min}\left(L^\prime\nabla^2h(x)-\lambda\nabla^2R_\theta(x)\right)\geq 0,
\end{equation*}
This proves that the pair $(\lambda R_\theta,h)$ satisfies the $L$-smad condition on $(0,a]^n$.
$\hfill\Box$
\end{proof}
Finally, combining Lemmas \ref{lema:1divxLhg} and \ref{lemma:rtheta_smooth}, we obtain an estimate of the $L$-smad constant for the objective functional $F=\mathcal{D}^{AA}+\lambda R_\theta$. In particular, an admissible choice is $\bar{L}=L+L'$, yielding
\begin{equation}
    \bar{L} h^{AA} - F \quad \text{is convex on } \operatorname{int}(\operatorname{dom} h^{AA}).
    \label{eq:Lsmad_AA}
\end{equation}
Hence, the objective functional satisfies the $L$-smooth adaptability condition.

To establish the convergence of the mirror descent algorithm in this nonconvex setting, in addition to the $L$-smad property, we also rely on the Kurdyka-\L{}ojasiewicz (K\L{}) property of the minimized functional. Directly verifying the K\L{} property from its definition is generally challenging. Fortunately, a broad class of functions arising in optimization and imaging are known to satisfy the K\L{} property through $o$-minimal geometry. A detailed review of K\L{} functions and $o$-minimal structures is reported in Appendix \ref{Appendix:KLfunctions}. We therefore establish the K\L{} property of the objective functional $\Psi$ by showing that it is definable in an $o$-minimal structure, which in turn guarantees the desired K\L{} property and provides the remaining key ingredient for proving convergence to a critical point.
\begin{proposition}
Let $\Psi=\mathcal{D}^{AA}(x;y)+\iota_{[0,a]^n}(x)+\lambda R_\theta(x)$. Then $\Psi$ satisfies the K\L{} property at any $x\in\operatorname{crit}(\Psi)$. Moreover, $\Psi$ is coercive, and the set
\[
S_\infty=\{x\in\overline{\operatorname{dom}\Psi}:\Psi(x)=+\infty\}
\]
is closed.
\label{pro:PsiKL}
\end{proposition}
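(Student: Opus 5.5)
The plan is to prove the three claims separately, with the K\L{} property obtained through definability in an $o$-minimal structure, as announced before the statement. The natural structure is $\mathbb{R}_{\mathrm{an},\exp}$, the expansion of the restricted analytic field by the exponential. It contains all globally subanalytic sets, the graph of $\exp$, hence the graph of $\log$ on $(0,\infty)$, and it is closed under the usual operations: sums, products, compositions, finite minima and maxima, and inversion of nonvanishing functions. I will then invoke the result recalled in Appendix~\ref{Appendix:KLfunctions}: a proper lower semicontinuous function definable in an $o$-minimal structure has the K\L{} property at every critical point (Bolte--Daniilidis--Lewis--Shiota).

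The key steps are as follows.

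\emph{Definability of $\mathcal{D}^{AA}$.} The maps $x\mapsto\langle a_q,x\rangle$ are linear, hence semialgebraic. On $(0,\infty)$, the function $t\mapsto y_q/t$ is semialgebraic and $t\mapsto\log t$ is definable in $\mathbb{R}_{\mathrm{an},\exp}$. I extend $\mathcal{D}^{AA}$ by $+\infty$ wherever some $\langle a_q,x\rangle\le 0$; this piecewise description with a semialgebraic case split keeps it definable.

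\emph{Definability of $\iota_{[0,a]^n}$.} The box is semialgebraic, so its indicator is semialgebraic.

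\emph{Definability of $R_\theta$.} The finite differences and the convolutions $k_l*x^{(c)}$ are linear. The map $\mathcal{A}$ is a square root of a positive polynomial (since $\epsilon>0$), hence semialgebraic. $\mathcal{K}$ is built from linear maps and quotients by $\mathcal{A}>0$, hence semialgebraic. The RBF influence functions are finite sums of $\exp(-(z-\mu_k)^2/(2\gamma_k^2))$, which is definable in $\mathbb{R}_{\exp}$. Therefore $R_\theta$, a finite sum of products and compositions of these, is definable. Summing the three pieces shows that $\Psi$ is definable. It is also proper and lower semicontinuous: $\mathcal{D}^{AA}$ tends to $+\infty$ as $\langle a_q,x\rangle\to 0^+$ whenever $y_q>0$, since $y_q/t$ dominates $\log t$, and the indicator of a closed set is lsc. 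This gives the K\L{} property at every critical point.

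For coercivity, $\operatorname{dom}\Psi\subseteq[0,a]^n$ is bounded, so $\Psi\equiv+\infty$ outside a bounded set, and $\Psi(x)\to+\infty$ as $\|x\|\to\infty$ holds trivially. For the closedness of $S_\infty$, note that $\overline{\operatorname{dom}\Psi}\subseteq[0,a]^n$. On this set $R_\theta$ is finite and continuous, because $\mathcal{A}\ge\sqrt{\epsilon}>0$ and the Gaussians are smooth. Hence $\Psi(x)=+\infty$ exactly when some $\langle a_q,x\rangle=0$. So $S_\infty=\overline{\operatorname{dom}\Psi}\cap\bigcup_q\{x:\langle a_q,x\rangle=0\}$, a finite union of closed sets intersected with a closed set, and is therefore closed.

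I expect the main obstacle to be the boundary behaviour. Definability must be checked for the extended-valued function, not just on the open orthant. The case $y_q=0$, where $\log t\to-\infty$ at the boundary, also has to be handled. For that I would assume $y_q>0$, which is natural for Gamma-noise data, or fold such terms into the $+\infty$ convention. Once that convention is fixed, both lower semicontinuity and the description of $S_\infty$ as a zero set follow.
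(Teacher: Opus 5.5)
Your proposal is correct and takes the same route as the paper. Both arguments show that each of the three summands is definable in $\mathbb{R}_{\mathrm{an},\exp}$ and then apply the $o$-minimal K\L{} theorem; coercivity comes from the bounded domain, and $S_\infty$ is $\overline{\operatorname{dom}\Psi}$ intersected with the closed sets $\{\langle a_q,x\rangle=0\}$. You are more careful than the paper about lower semicontinuity at the boundary, where $y_q>0$ is genuinely needed; the paper assumes this implicitly through its bound involving $\log y_q$. Your alternative of folding $y_q=0$ terms into the $+\infty$ convention would not restore lower semicontinuity, since $\log t\to-\infty$ as $t\to 0^+$, so assuming $y_q>0$ is the right choice.
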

\begin{proof}
Following \cite[Proposition 4.3]{daniele2026deep}, it suffices to verify the properness, lower semicontinuity, continuity on the domain, and definability of $\Psi$ to establish the K\L{} property.

\noindent\textbf{1. $\Psi$ is proper, lower semicontinuous, and continuous on its domain.}
Indeed, for any $x\in (0,a)^n$, one has $\Psi(x)<+\infty$, implying that $\Psi$ is proper. Since $[0,a]^n$ is closed, the indicator function $\iota_{[0,a]^n}$ is lower semicontinuous \cite{rockafellar1997convex}. Consequently, $\Psi=\mathcal{D}^{AA}(x;b)+\iota_{[0,a]^n}(x)+\lambda R_\theta(x)$ being the sum of lower semicontinuous functions, is itself lower semicontinuous. Moreover, for $x\in\operatorname{dom}(\Psi)$, one has $\Psi=\mathcal{D}^{AA}(x;y)+\lambda R_\theta(x)$, and continuity follows directly from the continuity of $\mathcal{D}^{AA}$ and $R_\theta$.

\vspace{0.3cm}
\noindent\textbf{2. $\Psi$ is a definable fuction in $\mathbb{R}_{\text{an,exp}}$.}

We first recall the AA-type fidelity term
\begin{equation*}
\mathcal{D}^{AA}(x;y)=\sum_{q=1}^m \frac{y_q}{(Ax)_q}+\log(Ax)_q=\sum_{q=1}^m\mathcal{D}^{AA}_q(x). 
\end{equation*}
To establish its definability, it suffices to consider the elementary function $f(x)=1/x$ on the domain $\{x>0\}$. Its graph is given by
\begin{equation*}
\Gamma(f)=\{(x,t)\in\mathbb{R}^2:tx-1=0,x>0\},
\end{equation*}
which is a semialgebraic set. Hence, $f$ is definable in the semialgebraic structure $(\mathbb{R},+,\cdot,(r)_{r\in \mathbb{R}})$ and, consequently, definable in the larger $o$-minimal structure $\mathbb{R}_{\text{an,exp}}$. Since the logarithm is also definable in $\mathbb{R}_{\text{an,exp}}$, each component in $\mathcal{D}^{AA}_q(x)$ , being the sum of a logarithmic term and a reciprocal term, is definable. By closure of definable functions under finite sums, the fidelity term $\mathcal{D}^{AA}$ is therefore a definable function in $\mathcal{\mathbb{R}_{\text{an,exp}}}$. 

We next show that $\iota_{[0,a]^n}$ is semialgebraic, and hence definable in $\mathbb{R}_{\text{an,exp}}$. Consider the polynomials
\begin{equation*}
P^0_j(x)=0,
P^1_j(x)=-1,
P^2_j(x)=x_j(x_j-a),  
P^3_j(x)=x_j, 
P^4_j(x)=x_j-a, \forall j=1\ldots,n.
\end{equation*}
Then the cube $[0,a]^n$ can be expressed as a finite union of finite intersections of semialgebraic sets:
\begin{equation*}
\begin{aligned}
[0,a]^n = \bigcup_{\substack{S_0,S_a\subseteq\{1,\dots,n\} \\ S_0 \cap S_a = \varnothing}} \Big( &\bigcap_{j \in S_0} \{x \in \mathbb{R}^n : P^3_j(x)=0,P^1_j(x)<0\} \\
&\cap \bigcap_{j \in S_a} \{x \in \mathbb{R}^n: P^4_j(x)=0,P^1_j(x)<0\} \\
&\cap \bigcap_{j \notin S_0\cup S_a} \{x \in \mathbb{R}^n : P^0_j(x)=0,P^2_j(x) < 0\} \Big).
\end{aligned}
\end{equation*}
Hence $[0,a]^n$ is a semialgebraic set, and its indicator function $\iota_{[0,a]^n}$ is a semialgebraic function.

The network $R_\theta$ consists of compositions of affine mappings, convolution operators, and nonlinear activation functions constructed from Gaussian basis expansions. Therefore, $R_\theta$ is definable in $\mathbb{R}_{\text{an,exp}}$. 

Since all three components of $\Psi$ are definable in the $o$-minimal structure $\mathbb{R}_{\text{an,exp}}$, the objective functional $\Psi$ is also definable in $\mathbb{R}_{\text{an,exp}}$. Consequently, Theorem \ref{theo:ominKL} implies that $\Psi$ satisfies the Kurdyka-\L{}ojasiewicz property.

\noindent\textbf{3. $\Psi$ is coercive.}

Owing to the presence of the indicator function $\iota_{[0,a]^n}$, one has $\Psi(x)=+\infty$ whenever $x\notin [0,a]^n$. Hence
\begin{equation*}
\|x\| \rightarrow +\infty \quad \Rightarrow \quad \Psi(x) \rightarrow +\infty,
\end{equation*}
which establishes coercivity.

\vspace{0.3cm}
\noindent\textbf{4. $S_\infty$ is closed.}

In the present setting, $\overline{\operatorname{dom}(\Psi)}=[0,a]^n$. Moreover, for $x\in\overline{\operatorname{dom}(\Psi)}$, the condition $x\in S_\infty$ is equivalent to the existence of some $q\in\{1,\ldots,m\}$ such that $(Ax)_q=0$. Let $a_q$ denote the $q$-th row of $A$, and let $\operatorname{Ker}(a_q)$
denote its null space. Then
\begin{equation*}
S_\infty=\cup_{q=1}^m\left(\operatorname{Ker}(a_q)\cap [0,a^n]\right).
\end{equation*}
Since the mapping $f_q(x)=\langle a_q,x\rangle$ is continuous and $\{0\}\subset\mathbb{R}$ is closed, each set
\begin{equation*}
\operatorname{Ker}(a_q)=f_q^{-1}(\{0\})
\end{equation*}
is closed in $\mathbb{R}^n$. Therefore, $S_\infty$, being a finite union and intersection of closed sets, is itself closed.
$\hfill\Box$
\end{proof}
\begin{remark}
An alternative proof of the second component can be obtained by extending the argument of \cite[Proposition 4.3]{daniele2026deep}. For this reason, in the previous result, we discussed the properties of the set $S_\infty$, which are necessary to prove \cite[Proposition 4.3]{daniele2026deep} but not Theorem \ref{theorem:conver}. It should be noted, however, that the objective functional $\Psi$ considered here is not globally subanalytic, but only locally subanalytic. Nevertheless, by \cite[Corollary 4.5]{daniele2026deep}, this local characterization is still sufficient to establish that $\Psi$ is a regularized K\L{} functional. In the present work, we instead adopt the $o$-minimal framework, since it covers a substantially broader class of functions and avoids the technical construction of auxiliary real-analytic functions required in \cite[SM 2.4]{daniele2026deep}. This provides a more direct and unified proof of the K\L{} property.
\end{remark}

\begin{theorem}[Convergence of mirror descent with AA-type fidelity]\label{theorem:conver} Let $\Psi=\mathcal{D}^{AA}(x;y)+\iota_{[0,a]^n}(x)+\lambda R_\theta(x)$ be the objective functional. Assume that the step size $\tau$ satisfies $0<\tau \bar{L}<1$, where $\bar{L}$ is defined in \eqref{eq:Lsmad_AA}. Then the sequence $\{x^k\}$ produced by the iteration \eqref{eq:MD_AA} converges to a critical point of $\Psi$.
\end{theorem}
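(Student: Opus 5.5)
The plan is to reduce Theorem \ref{theorem:conver} to the abstract convergence result for Bregman proximal gradient methods in the nonconvex setting of \cite{bolte2018first} (Theorem 4.1 there), in the form used in \cite{daniele2026deep}. We take $f=\iota_{[0,a]^n}$, $g=\mathcal{D}^{AA}(\cdot;y)+\lambda R_\theta$ and $h=h^{AA}$, and verify the hypotheses one at a time.

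First, Assumption \ref{ass:fandg} has to hold. $f$ is proper, lsc and convex, and $\operatorname{dom} f\cap C=(0,a]^n\neq\emptyset$. $g$ is $\mathcal C^1$ on $\mathbb{R}^n_{++}$, since $\langle a_q,x\rangle>0$ there. Supercoercivity of $h+\theta f$ is trivial, because $f=+\infty$ outside the cube. For lower boundedness of $\Psi$ on the cube, $R_\theta$ is bounded because it is continuous on the compact set $[0,a]^n$. The fidelity satisfies $y_q/t+\log t\ge 1+\log y_q$ for $t>0$, so $\mathcal{D}^{AA}$ is bounded below.

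Second, we need the $L$-smad property of $(g,h)$. This is \eqref{eq:Lsmad_AA}, which combines Lemmas \ref{lema:1divxLhg} and \ref{lemma:rtheta_smooth}: $\bar L h-g$ is convex on the relevant domain $(0,a]^n$. From this we derive the sufficient decrease inequality in the standard way. Optimality of $x^{k+1}$ in \eqref{eq:xkupdate}, together with the descent lemma $g(x^{k+1})\le g(x^k)+\langle\nabla g(x^k),x^{k+1}-x^k\rangle+\bar L D_h(x^{k+1},x^k)$, gives
\[
\Psi(x^{k+1})\le \Psi(x^k)-\Big(\tfrac1\tau-\bar L\Big)D_h(x^{k+1},x^k).
\]
The condition $\tau\bar L<1$ makes the coefficient positive. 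Summing, $\sum_k D_h(x^{k+1},x^k)<\infty$ and $\Psi(x^k)$ is nonincreasing. Proposition \ref{prop:projection} shows that \eqref{eq:MD_AA} really is the Bregman step \eqref{eq:xkupdate}, with the positivity of the argument guaranteed by the backtracking remark.

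Third, we need the relative-error condition and the KŁ property. Proposition \ref{pro:PsiKL} gives the KŁ property at critical points and coercivity. Coercivity, or simply the box constraint, keeps $\{x^k\}$ bounded. The standard Bolte–Sabach–Teboulle–Vaisbourd argument then requires $h$ to be strongly convex and $\nabla h,\nabla g$ to be Lipschitz on bounded subsets of $\operatorname{int}\operatorname{dom}h$ that contain the iterates. This is where I expect the main obstacle. $h^{AA}$ and $\mathcal{D}^{AA}$ blow up at the boundary $x_j=0$, so we must show the iterates stay in a compact subset $[\delta,a]^n\subset\mathbb{R}^n_{++}$.

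My first attempt would use the monotonicity $\Psi(x^k)\le\Psi(x^0)$. The fidelity term diverges as $(Ax)_q\to0$, which by the positivity of the rows of $A$ keeps each $\langle a_q,x^k\rangle$ away from zero. This is essentially the closedness of $S_\infty$ in Proposition \ref{pro:PsiKL}. That bound alone does not bound individual components $x_j^k$ below. I would therefore supplement it with the multiplicative structure of \eqref{eq:MD_AA}: $x^{k+1}_j/x^k_j$ is controlled by $(1+\tau(x^k_j)^2\nabla_j g(x^k))^{-1/2}$, and $\nabla g$ is bounded on the sublevel set. Alternatively, following \cite{daniele2026deep}, one can use $\sum_k D_h(x^{k+1},x^k)<\infty$ to prevent escape to the boundary.

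On a compact set $[\delta,a]^n$, $h$ is $2/a^3$-strongly convex and all gradients are Lipschitz. This yields $\operatorname{dist}(0,\partial\Psi(x^{k+1}))\le c\|x^{k+1}-x^k\|$ from the optimality condition $0\in\partial f(x^{k+1})+\nabla g(x^k)+\tfrac1\tau(\nabla h(x^{k+1})-\nabla h(x^k))$. The standard KŁ argument then gives $\sum_k\|x^{k+1}-x^k\|<\infty$. Hence $x^k$ converges to some $x^\ast$, and passing to the limit in the subdifferential inclusion, using continuity of $\Psi$ on its domain, shows $0\in\partial\Psi(x^\ast)$, so $x^\ast$ is a critical point of $\Psi$.
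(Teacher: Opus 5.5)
Your reduction follows the same route as the paper, which verifies the hypotheses of \cite[Theorem 3]{hurault2023convergent}: the lower bound on $\mathcal{D}^{AA}$, $L$-smad from Lemmas \ref{lema:1divxLhg} and \ref{lemma:rtheta_smooth}, and the K\L{} property and coercivity from Proposition \ref{pro:PsiKL}. Your sufficient decrease inequality is correct.

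The gap is the step you flag but do not close. The relative-error bound $\operatorname{dist}(0,\partial\Psi(x^{k+1}))\le c\|x^{k+1}-x^k\|$ requires $\nabla h^{AA}(x)=-(x_j^{-2})_j$ to be Lipschitz along the iterates, i.e.\ a uniform $\delta>0$ with $x^k\in[\delta,a]^n$. For $\nabla\mathcal{D}^{AA}$ the sublevel bound on $\langle a_q,x\rangle$ already suffices; only $\nabla h^{AA}$ is the obstruction. Neither of your remedies gives this uniform $\delta$.
\begin{itemize}
\item Solving the separable subproblem \eqref{eq:xkupdate} coordinatewise, the step reads $(x_j^{k+1})^{-2}=\max\{(x_j^k)^{-2}+\tau\nabla_j g(x^k),\,a^{-2}\}$. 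A bound $|\nabla_j g|\le G$ on the sublevel set therefore only shows that $(x_j^k)^{-2}$ grows at most linearly in $k$, i.e.\ $x_j^k\gtrsim k^{-1/2}$. Your per-step ratio bound, iterated, gives even less: geometric decay.
\item Since $D_{h^{AA}}(u,x)=\sum_j (u_j-x_j)^2/(u_jx_j^2)$, a trajectory with $x_j^k\sim c\,k^{-1/2}$ has $D_h(x^{k+1},x^k)$ of order $k^{-3/2}$. So $\sum_k D_h(x^{k+1},x^k)<\infty$ does not prevent $x_j^k\to0$.
\end{itemize}

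The paper settles this point in its hypothesis (v) by asserting that every sublevel set $\Omega_\alpha=\{x\in(0,a]^n:\Psi(x)\le\alpha\}$ lies in some $[\beta,a]^n$. Your own remark shows why this does not follow for a genuine blur $A$. $\mathcal{D}^{AA}$ blows up only as some $(Ax)_q\to0$, and $R_\theta$ is analytic on all of $\mathbb{R}^n$, so a single pixel can tend to $0$ with $\Psi$ bounded. Copying the paper's argument would therefore not close your gap either.

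In fact no uniform $\delta$ need exist. The box-constrained problem can have critical points $x^\ast$ on a face $\{x_j=0\}$ with $\gamma=\nabla_j g(x^\ast)>0$. Iterates approaching such a point behave like $x_j^k\approx(\tau\gamma k)^{-1/2}$. Along them the relative-error inequality itself fails, since $\operatorname{dist}(0,\partial\Psi(x^{k+1}))\ge|\nabla_j g(x^{k+1})|\to\gamma$ while $\|x^{k+1}-x^k\|\to0$.

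A rigorous version needs an additional ingredient. One option is to replace $[0,a]^n$ by $[\delta,a]^n$ with $\delta>0$. By the unimodality argument of Proposition \ref{prop:projection}, the Bregman step is then the clamp $\max\{\delta,\min\{\cdot,a\}\}$, and all your estimates hold on $[\delta,a]^n$. Another option is an explicit assumption that the iterates stay away from the faces; this is automatic for $A=I$.

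Finally, for fixed $\tau$ you should not appeal to the backtracking remark for positivity. The box already makes \eqref{eq:xkupdate} well posed: the $j$-th coordinate of the minimizer is $a$ whenever $(x_j^k)^{-2}+\tau\nabla_j g(x^k)\le a^{-2}$.
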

\begin{proof}
The result follows by verifying the assumptions of \cite[Theorem 3]{hurault2023convergent} (see also \cite[Theorem 4.1]{bolte2018first}). Let $C_h=\operatorname{dom}(h^{AA})$. The required conditions are:

\begin{enumerate}[label=(\roman*)]
\item $h^{AA}:C_h\subset\mathbb{R}^n\to\mathbb{R}\cup\{+\infty\}$ is a proper, convex, $\mathcal{C}^2$ Legendre function, and is strongly convex on every bounded convex subset of $C_h$.

\item $F(x):=\mathcal{D}^{AA}(x;y)+\lambda R_\theta(x)$ is proper, continuously differentiable, and bounded from below on
$
\operatorname{conv}(\operatorname{dom}(\iota_{[0,a]^n}))\cap \operatorname{int}(\operatorname{dom}(h^{AA})).
$

\item The functional $\bar{L}h^{AA}-F$ is convex on
$
\operatorname{conv}(\operatorname{dom}(\iota_{[0,a]^n}))\cap \operatorname{int}(\operatorname{dom}(h^{AA})).
$
\item $\Psi=F+\iota_{[0,a]^n}$ is bounded from below, coercive, and satisfies the K\L{} property.

\item For every $\alpha>0$, both $\nabla h^{AA}$ and $\nabla F$ are Lipschitz continuous on the sublevel set
\[
\{x\in \operatorname{conv}(\operatorname{dom}(\iota_{[0,a]^n}))
\cap \operatorname{int}(\operatorname{dom}(h^{AA}))
:\Psi(x)\le \alpha\}.
\]

\item The set $[0,a]^n$ is nonempty, closed, convex, and semialgebraic.

\item For every $x^k\in \operatorname{int}(\operatorname{dom}(h^{AA}))$, the solution set
\[
M_\tau(x^k) \coloneqq
\arg\min_{x\in\mathbb{R}^n}
\Bigl\{
\iota_{[0,a]^n}(x)
+\langle x-x^k,\nabla F(x^k)\rangle
+\frac1\tau D_h^{AA}(x,x^k)
\Bigr\}
\]
is nonempty and contained in $\operatorname{int}(\operatorname{dom}(h^{AA}))$.
\end{enumerate}
Proposition \ref{pro:PsiKL} establishes condition (vi). In conjunction with the lower bound
\begin{equation*}
\mathcal{D}^{AA}(x;y)\geq m +\sum_{q=1}^m\log y_q,
\end{equation*}
it also yields conditions (ii) and (iv). Condition (iii) follows from Lemmas \ref{lema:1divxLhg} and \ref{lemma:rtheta_smooth}. Furthermore, by Definition \ref{def:legendre} and the explicit form of the chosen Bregman potential, $h^{AA}(x)=\sum_{j=1}^n\frac{1}{x_j}$ is a Legendre function and satisfies condition (i). It therefore remains to verify (v) and (vii).

For (v), let
\[
\Omega_\alpha \coloneqq
\{x\in \operatorname{conv}(\operatorname{dom}(\iota_{[0,a]^n}))
\cap \operatorname{int}(\operatorname{dom}(h^{AA}))
:\Psi(x)\le \alpha\}.
\]
Since $\operatorname{dom}\left(\iota_{[0,a]^n}\right)=[0,a]^n$ and $\operatorname{dom}(h^{AA})=(0,+\infty)^n$, we have
\[
\Omega_\alpha\subset (0,a]^n.
\]
Hence $\Omega_\alpha$ is bounded. Because $\Psi(x)\leq\alpha$ on $\Omega_\alpha$, there exists some $\beta>0$, such that $\Omega_\alpha\subset [\beta,a]^n$. Since both $h^{AA}$ and $F$ are $\mathcal{C}^2$ on $[\beta,a]^n$, their Hessians are bounded on $\Omega_\alpha$. Consequently, $\nabla h^{AA}$ and $\nabla F$ are Lipschitz continuous on $\Omega_\alpha$, establishing (v).

To verify (vii), note that $\iota_{[0,a]^n}$ is convex and $h^{AA}$ is strictly convex on $\operatorname{int}(\operatorname{dom}(h^{AA}))$. Therefore, the surrogate objective
\[
x\mapsto
\iota_{[0,a]^n}(x)
+\langle x-x^k,\nabla F(x^k)\rangle
+\frac1\tau D_h^{AA}(x,x^k)
\]
is strictly convex on $[0,a]^n$. Since $\iota_{[0,a]^n}$ restricts the feasible set to the compact set $[0,a]^n$, the above minimization problem admits a unique global minimizer.

Furthermore, for any $x^k\in \operatorname{int}(\operatorname{dom}(h^{AA}))$, the Bregman distance satisfies
\[
D_h^{AA}(x,x^k)=+\infty
\]
whenever $x\in \partial\operatorname{dom}(h^{AA})$. Hence the minimizer cannot lie on the boundary of $\operatorname{dom}(h^{AA})$, implying that
\[
M_\tau(x^k)\subset \operatorname{int}(\operatorname{dom}(h^{AA})).
\]
Therefore $M_\tau$ is a well-defined single-valued mapping from $\operatorname{int}(\operatorname{dom}(h^{AA}))$ into itself, consistent with \cite[Remark 3.1]{bolte2018first}. Similarly, one can see from the iterative update \eqref{eq:MD_AA} that when $x^0\in \operatorname{int}(\operatorname{dom}(h^{AA}))$, the pre-projection result still lies in $\mathbb{R}^n_{++}$ rather than in $\partial\operatorname{dom}(h^{AA})$.

All assumptions of \cite[Theorem 3]{hurault2023convergent} are thus satisfied, and the conclusion follows immediately.
$\hfill\Box$
\end{proof}

\begin{proposition}[Existence and convergence of fixed points] Define 
\begin{equation}
f_\theta(x;y)=\Pi_{[0,a]^n}\left(\frac{x}{\sqrt{1+\tau x^2\left(\nabla R_\theta(x)+\nabla \mathcal{D}^{AA}(x;y)\right)}}\right).
\label{eq:f_theta}
\end{equation}
Then the AA-type update rule \eqref{eq:MD_AA} can be written as $x^{k+1}=f_\theta(x^k;y)$. Suppose that $0<\tau\bar{L}<1$, where $\bar{L}$ is defined in \eqref{eq:Lsmad_AA}, and let
$x^0\in\operatorname{int}(\operatorname{dom}(h^{AA}))$.
Then the fixed-point set
\begin{equation*}
\operatorname{Fix}(f_\theta(\cdot;y))=\{x\in\mathbb{R}^n:x=f_\theta(x;y)\}
\end{equation*}
is nonempty. Moreover, the sequence $\{x^k\}$ generated by \eqref{eq:MD_AA} converges to a point in $\operatorname{Fix}(f_\theta(\cdot;y))$.
\end{proposition}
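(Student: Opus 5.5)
The proof has three parts. First, check that the closed-form AA update \eqref{eq:MD_AA} is exactly the mirror descent step \eqref{eq:xkupdate} with $f=\iota_{[0,a]^n}$, $g=F=\mathcal{D}^{AA}+\lambda R_\theta$ and $h=h^{AA}$. Then convert the limit supplied by Theorem \ref{theorem:conver} into a fixed point of $f_\theta$. Existence of a fixed point is then a by-product of convergence: there is no need for a separate Brouwer or Schauder argument, since we exhibit a fixed point as the limit of the iterates.

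\textbf{Step 1 (the update equals $f_\theta$).} Since $\nabla h^{AA}(x)=-x^{-2}$ componentwise, the conjugate gradient is $\nabla (h^{AA})^*(v)=(-v)^{-1/2}$ for $v<0$. Hence
\[
\nabla h^*\bigl(\nabla h(x^k)-\tau\nabla F(x^k)\bigr)=\frac{x^k}{\sqrt{1+\tau (x^k)^2\nabla F(x^k)}},
\]
which is well defined whenever the radicand is positive; the backtracking remark guarantees this. By Proposition \ref{prop:projection}, the Bregman proximal step is $\Pi_{[0,a]^n}$. Therefore \eqref{eq:MD} coincides with \eqref{eq:MD_AA}, i.e.\ $x^{k+1}=f_\theta(x^k;y)$, with $\lambda$ absorbed into $R_\theta$ in \eqref{eq:f_theta}.

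\textbf{Step 2 (the limit is a fixed point).} Theorem \ref{theorem:conver} gives $x^k\to x^\infty$. By condition (vii) in its proof, every iterate lies in $(0,a]^n$, and the sufficient decrease $\Psi(x^{k})\le\Psi(x^0)<\infty$ of Bregman proximal gradient keeps all iterates in the sublevel set $\Omega_\alpha$ with $\alpha=\Psi(x^0)$. That proof also showed $\Omega_\alpha\subset[\beta,a]^n$ for some $\beta>0$, so $x^\infty\in[\beta,a]^n\subset\operatorname{int}\operatorname{dom}h^{AA}$. On $[\beta,a]^n$, $\nabla F$ is continuous, and so is the map $v\mapsto x/\sqrt{\cdot}$ as long as the radicand stays positive. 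Passing to the limit in $x^{k+1}=f_\theta(x^k;y)$ then gives $x^\infty=f_\theta(x^\infty;y)$, so $\operatorname{Fix}(f_\theta(\cdot;y))\ne\emptyset$ and $x^k$ converges to a point in it.

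\textbf{Main obstacle.} The delicate point is continuity of $f_\theta$ at $x^\infty$: we need $1+\tau (x^\infty)^2\nabla F(x^\infty)>0$ componentwise, so that $f_\theta$ is single-valued and continuous near the limit. I would handle this through the variational characterization rather than the closed form. The mirror map $T(x)=\arg\min_u\{\iota_{[0,a]^n}(u)+\langle u-x,\nabla F(x)\rangle+\tfrac1\tau D_h(u,x)\}$ is well defined on $\operatorname{int}\operatorname{dom}h$ by (vii), and its optimality condition
\[
0\in N_{[0,a]^n}(u)+\nabla F(x)+\tfrac1\tau\bigl(\nabla h(u)-\nabla h(x)\bigr)
\]
depends continuously on $x$. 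By strict convexity of the subproblem, $T$ is continuous on $(0,a]^n$, and it agrees with $f_\theta$ wherever the latter is defined.

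Alternatively, one can argue directly from criticality. At a critical point $x^\infty$, $-\nabla F(x^\infty)\in N_{[0,a]^n}(x^\infty)$. Combined with $x^\infty_j>0$, this gives $\nabla_j F(x^\infty)=0$ when $x^\infty_j<a$ and $\nabla_j F(x^\infty)\le0$ when $x^\infty_j=a$. Substituting into \eqref{eq:f_theta}, the pre-projection value is $x^\infty_j$ in the first case and at least $a$ in the second, with positive radicand if $\tau a^2|\nabla_jF|<1$; otherwise one uses the variational form $T$. In both cases the projection returns $x^\infty_j$. This confirms $x^\infty\in\operatorname{Fix}(f_\theta)$ independently of the limiting argument.
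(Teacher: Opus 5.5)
Your proposal is correct and follows essentially the same route as the paper: obtain $x^k\to x^\infty$ from Theorem~\ref{theorem:conver} and pass to the limit in $x^{k+1}=f_\theta(x^k;y)$ using continuity of $f_\theta$ at $x^\infty$. Your extra steps fill in what the paper skips when it claims continuity of $f_\theta$ just from analyticity of $R_\theta$ and $\mathcal{D}^{AA}$: you verify the closed form via $\nabla (h^{AA})^*$, place $x^\infty$ in $\operatorname{int}(\operatorname{dom}(h^{AA}))$, and secure positivity of the radicand $1+\tau (x^\infty)^2\nabla F(x^\infty)$ through the variational map $T$ or the criticality conditions.
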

\begin{proof}
By Theorem \ref{theorem:conver}, the sequence $\{x^k\}$ converges to some limit point $x^\infty\in\operatorname{int}(\operatorname{dom}(h^{AA}))$. Since $R_\theta$ and $\mathcal{D}^{AA}$ are analytic, the mapping $f_\theta$ defined in
\eqref{eq:f_theta} is continuous. Passing to the limit in
$x^{k+1}=f_\theta(x^k;y)$ gives
\begin{equation*}
x^\infty=\lim_{k\rightarrow \infty}x^{k+1}=\lim_{k\rightarrow \infty}f_\theta(x^k;y)=f_\theta(\lim_{k\rightarrow \infty} x^k;y)= f_\theta(x^\infty;y).
\end{equation*}
Hence $x^\infty\in\operatorname{Fix}(f_\theta(\cdot;y))$ showing that the fixed-point set is nonempty. Since $\{x^k\}$ converges to $x^\infty$, the proof is complete.
$\hfill\Box$
\end{proof}
\section{Forward and gradient computation in DEQs}\label{sec:ForBack}

Having established the theoretical framework of the proposed DEQ model at inference stage, two computational challenges remain to be addressed in practice. First, during the forward pass, one must efficiently compute a fixed point of the mapping $f_\theta$ for a given observation and network parameters $\theta\in\Theta$. Second, since a DEQ model can be interpreted as an infinitely deep network, standard backpropagation is generally infeasible due to its prohibitive memory requirements. Consequently, after solving the lower-level fixed-point problem in \eqref{eq:bilevelproblm} and evaluating the upper-level loss, an efficient and memory-intensive strategy is required to optimize the network parameters $\theta$.

In the remainder of this section, we describe the numerical techniques employed to address these two challenges.
\subsection{Fixed-point computation via backtracking mirror descent}
Within the DEQ framework, both training and inference require the computation of a fixed point of the mapping $f_\theta$. A straightforward approach is to apply the fixed-point iteration induced by \eqref{eq:MD_KL} and \eqref{eq:MD_AA}. The efficiency and convergence behavior of this iteration, however, depend critically on the choice of the step size $\tau$.

The convergence of the AA iteration \eqref{eq:MD_AA} is guaranteed by Theorem \ref{theorem:conver}, provided that $\tau<\frac{1}{\bar{L}}$, where $\bar{L}=L+L^\prime$ with the constants $L$ and $L^\prime$ given in Lemmas \ref{lema:1divxLhg} and \ref{lemma:rtheta_smooth}, respectively. For the KL iteration \eqref{eq:MD_KL}, the same step-size condition is required in \cite[Theorem 3]{hurault2023convergent}, where $L$ is characterized in Lemma \ref{lema:BurgLhg} and $L^\prime$ is derived in \cite[SM 2.3.3]{daniele2026deep}. Nevertheless, such a strategy is impractical for several reasons.

First, as discussed in Lemma \ref{lemma:rtheta_smooth}, estimating the $L$-smad constant $L^\prime$ of the network requires evaluating bounds on the spectral radius of the Hessian of $R_\theta$, which can be computationally expensive. Second, the resulting estimates are often overly conservative \cite{hurault2023convergent}, leading to unnecessarily small step sizes and consequently slow convergence. Finally, the validity of the update itself imposes additional restrictions on $\tau$.   
Indeed, inspection of the update formula \eqref{eq:MD_AA} shows that the denominator terms must remain strictly positive; otherwise, the square-root operation becomes undefined. 
Therefore, an appropriate step-size selection mechanism is essential not only for convergence but also for the well-definedness of the iteration.

\begin{remark} (Step size for the KL fidelity). Although the update formula \eqref{eq:MD_KL} associated with the KL-type fidelity term does not explicitly involve a square-root operation, positivity constraints remain necessary. Indeed, for Burg's entropy $h(x)=-\log x$, the Fenchel conjugate is given by $h^*(y)=-\log(-y)-1$, whose domain is $(-\infty,0)$ \cite{bauschke2017descent}. Comparing \eqref{eq:MD} with the update formula \eqref{eq:MD_KL} shows that the corresponding denominator terms must still remain positive in order for the iteration to be well defined.
\end{remark}

Motivated by the backtracking principle introduced in \cite{beck2017first}, we develop an adaptive step-size strategy specifically for the proposed mirror descent scheme. Starting from an initial step size $\tau_0>0$, the step size is decreased adaptively until the sufficient decrease condition \eqref{eq:backtrack_energy} and the positivity condition \eqref{eq:backtrack_positive} are both satisfied. The former is formulated in terms of the Bregman distance $D_h$, while the latter guarantees the well-definedness of the subsequent iterate. Consequently, the explicit estimation of the $L$-smad constants is avoided without sacrificing the convergence guarantees of the mirror descent scheme. The complete procedure is summarized in Algorithm \ref{alg:DEQ_backtrack}.

\begin{algorithm}[t]
\caption{Adaptive Bregman Backtracking Step-Size Selection}
\label{alg:DEQ_backtrack}
\begin{algorithmic}[1]

\Require
Current iterate $x^k\in\operatorname{int}(\operatorname{dom}h)$, initial step size $\tau_0>0$,
sufficient decrease parameter $\gamma\in(0,1)$,
backtracking factor $\eta\in(0,1)$.

\Ensure
Updated iterate $x^{k+1}$.

\State Set $\tau \leftarrow \tau_0$.

\State Compute
\[
M_\tau(x^k)=
\operatorname{Prox}^{h}_{\tau\iota_{[0,a]^n}}
\!\left(
\nabla h^{*}
\!\left(
\nabla h(x^k)
-\tau
\bigl(
\nabla R_{\theta}(x^k)
+\nabla\mathcal{D}(x^k;y)
\bigr)
\right)
\right).
\]

\While{
\begin{equation}
\Psi(x^k)-\Psi(M_\tau(x^k))
\leq
\frac{\gamma}{\tau}
D_h(M_\tau(x^k),x^k)
\label{eq:backtrack_energy}
\end{equation}
or
\begin{equation}
\nabla h(x^k)
-\tau
\bigl(
\nabla R_\theta(x^k)
+\nabla\mathcal{D}(x^k;y)
\bigr)
\geq 0
\label{eq:backtrack_positive}
\end{equation}
}

\State $\tau \leftarrow \eta\tau$.

\State Recompute $M_\tau(x^k)$.

\EndWhile

\State Set
\[
x^{k+1}\leftarrow M_\tau(x^k).
\]

\State \Return $x^{k+1}$.

\end{algorithmic}
\end{algorithm}
Having described the algorithm, we now present the following proposition regarding its convergence and the finite termination of the backtracking procedure.

\begin{theorem}
The backtracking procedure in Algorithm \ref{alg:DEQ_backtrack} terminates after finitely many iterations. Moreover, the sequence $\{x^k\}$ generated by \eqref{eq:MD} with step sizes determined by the backtracking strategy converges to a critical point of $\Psi$.
\end{theorem}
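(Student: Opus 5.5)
The argument splits into two parts: finite termination of the inner while-loop for each fixed outer index $k$, and global convergence of the outer sequence. For termination, fix $x^k\in\operatorname{int}(\operatorname{dom}h)$. The loop exits once neither \eqref{eq:backtrack_energy} nor \eqref{eq:backtrack_positive} holds, so I will show that each condition fails for every $\tau$ below some threshold, and that the threshold is reached after finitely many multiplications by $\eta$.

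\emph{Positivity.} Since $\nabla h^{AA}(x^k)=-1/(x^k)^2<0$ componentwise (similarly $\nabla h^{KL}(x^k)=-1/x^k<0$), the vector $\nabla h(x^k)-\tau(\nabla R_\theta(x^k)+\nabla\mathcal{D}(x^k;y))$ depends continuously on $\tau$ and equals $\nabla h(x^k)<0$ at $\tau=0$. Hence it is strictly negative for all $\tau\in(0,\tau_1)$ with some $\tau_1>0$. This is exactly what keeps the argument of the square root in \eqref{eq:MD_AA}, respectively the denominator in \eqref{eq:MD_KL}, positive, so $M_\tau(x^k)$ is well defined and lies in $\mathbb{R}^n_{++}$.

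\emph{Sufficient decrease.} I will use the $L$-smad inequality \eqref{eq:Lsmad_AA}, i.e. convexity of $\bar L h-F$ on $(0,a]^n$. This gives the descent lemma
\[
F(u)\le F(x^k)+\langle\nabla F(x^k),u-x^k\rangle+\bar L D_h(u,x^k).
\]
Combining it with the optimality of $u=M_\tau(x^k)$ in \eqref{eq:xkupdate}, compared against the candidate $u=x^k$ (note $\iota_{[0,a]^n}(x^k)=0$), yields the standard estimate (cf.\ \cite[Lemma 4.1]{bolte2018first})
\[
\Psi(x^k)-\Psi(M_\tau(x^k))\ge\Big(\tfrac1\tau-\bar L\Big)D_h(M_\tau(x^k),x^k).
\]
For $\tau<(1-\gamma)/\bar L$ we have $\tfrac1\tau-\bar L>\tfrac\gamma\tau$. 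So if $D_h(M_\tau,x^k)>0$, condition \eqref{eq:backtrack_energy} fails. If $D_h(M_\tau,x^k)=0$, strict convexity of $h$ gives $M_\tau(x^k)=x^k$, so $x^k$ is already critical; in that case I interpret the loop test with a strict inequality, or stop the algorithm there. Both thresholds are positive, so at most $\lceil\log(\min\{\tau_1,(1-\gamma)/\bar L\}/\tau_0)/\log\eta\rceil$ reductions are needed. This also gives the uniform lower bound $\tau_k\ge\underline\tau:=\eta\min\{\tau_0,\tau_1,(1-\gamma)/\bar L\}$. The main obstacle is that $\tau_1$ depends on $x^k$, so this lower bound need not be uniform in $k$.

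\emph{Uniform $\tau_1$.} Once the backtracking holds, the iterates have nonincreasing energy and stay in the sublevel set $\Omega_{\Psi(x^0)}$. As in the proof of Theorem \ref{theorem:conver}, this set is contained in $[\beta,a]^n$ for some $\beta>0$. On $[\beta,a]^n$, $\nabla h$ is bounded above by $-1/a^2$ and $\nabla F$ is bounded, so $\tau_1$ can be chosen uniformly in $k$, and hence $\underline\tau>0$ is independent of $k$.

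\emph{Global convergence.} With $\tau_k\in[\underline\tau,\tau_0]$, I will replicate the argument behind Theorem \ref{theorem:conver} (\cite[Theorem 4.1]{bolte2018first}, \cite[Theorem 3]{hurault2023convergent}) in its variable-step form. There are three ingredients.
\begin{enumerate}[label=(\roman*)]
\item Sufficient decrease. Condition \eqref{eq:backtrack_energy} gives $\Psi(x^k)-\Psi(x^{k+1})\ge\frac{\gamma}{\tau_0}D_h(x^{k+1},x^k)$. By strong convexity of $h$ on $[\beta,a]^n$ with modulus $\sigma=2/a^3$, this is at least $c\|x^{k+1}-x^k\|^2$ for some $c>0$.
\item Relative error. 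The optimality condition of \eqref{eq:xkupdate} reads
\[
0\in\partial\iota(x^{k+1})+\nabla F(x^k)+\tfrac1{\tau_k}\bigl(\nabla h(x^{k+1})-\nabla h(x^k)\bigr).
\]
Hence the vector $w^{k+1}=\nabla F(x^{k+1})-\nabla F(x^k)-\tfrac1{\tau_k}(\nabla h(x^{k+1})-\nabla h(x^k))$ lies in $\partial\Psi(x^{k+1})$. By the Lipschitz bounds in (v) and $\tau_k\ge\underline\tau$, it satisfies $\|w^{k+1}\|\le C\|x^{k+1}-x^k\|$.
\item Continuity. $\Psi$ is continuous on its domain.
\end{enumerate}
Together with boundedness of the iterates, which follows from the box constraint, and the K\L{} property from Proposition \ref{pro:PsiKL}, the abstract Attouch--Bolte--Svaiter theorem gives finite length of $\{x^k\}$ and convergence to a critical point of $\Psi$.
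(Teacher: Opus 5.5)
Your proposal is correct and follows the paper's proof: termination comes from the $L$-smad decrease estimate $\Psi(x^k)-\Psi(M_\tau(x^k))\ge(\frac1\tau-\bar L)D_h(M_\tau(x^k),x^k)$ for $\tau<(1-\gamma)/\bar L$, plus continuity of $\tau\mapsto\nabla h(x^k)-\tau\nabla F(x^k)$ at $\tau=0$. Convergence comes from the same sufficient-decrease/relative-error/K\L{} argument that the paper delegates to \cite[Proposition 4]{hurault2023convergent}, and your treatment of the case $D_h(M_\tau(x^k),x^k)=0$ (where the paper's strict inequality breaks down) and of a $k$-independent bound $\tau_k\ge\underline\tau$ makes explicit what the paper leaves implicit; one caution, though: the inclusion $\Omega_\alpha\subset[\beta,a]^n$ you import from the proof of Theorem~\ref{theorem:conver} does not follow from $\Psi\le\alpha$ for a genuine blur $A$ (only the $(Ax)_q$ stay away from $0$), so justify the uniform $\tau_1$ instead by $x^k\le a$ and boundedness of $\nabla F$ on the sublevel set, and note that the Lipschitz bound on $\nabla h^{AA}$ in your relative-error step rests on exactly the inclusion the paper uses for its condition (v).
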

\begin{proof}
The backtracking strategy preserves the sufficient decrease property required in the proof of Theorem \ref{theorem:conver}. Therefore, the convergence analysis extends to the backtracking scheme, and the convergence of the generated sequence follows by the same argument; see \cite[Proposition 4]{hurault2023convergent}. It therefore remains to show that the backtracking procedure terminates in finitely many steps.

By \cite[Lemma 2]{hurault2023convergent}, for any step size $\tau>0$, the following sufficient decrease estimate holds:
\begin{equation*}
\Psi(x^k)-\Psi(M_\tau(x^k))\geq \left(\frac{1}{\tau}-\bar{L}\right)D_h(M_\tau(x^k),x^k),
\end{equation*}
where $\bar{L}>0$ denotes the Lipschitz constant of $\lambda R_\theta+\mathcal{D}$. Consequently, whenever $\tau<\frac{1-\gamma}{\overline{L}}$, we have $\frac{1}{\tau}-\bar{L}> \frac{\gamma}{\tau}$, which implies
\begin{equation*}
\Psi(x^k)-\Psi(M_\tau(x^k))> \frac{\gamma}{\tau}D_h(M_\tau(x^k),x^k).
\end{equation*}
Hence the first acceptance condition in the while-loop of Algorithm \ref{alg:DEQ_backtrack} is automatically satisfied for $\tau$ chosen in this way.

Next, define
\begin{equation*}
T_h(\tau)=\nabla h(x^k)-\tau\bigl(\nabla R_\theta(x^k)+\nabla\mathcal{D}(x^k;y)\bigr).
\end{equation*}
At $\tau=0$, one has
\begin{equation*}
T_h(0) =
\begin{cases} 
-\frac{\mathbf{1}}{x^k}, & \text{for the KL-type fidelity,} \\
-\frac{\mathbf{1}}{(x^k)^2}, & \text{for the AA-type fidelity,}
\end{cases}
\end{equation*}
where $\mathbf{1}\in\mathbb{R}^n$ denotes the vector of ones.
When $x^k\in (0,a]^n$, $T_h(0)$ is strictly negative componentwise in both cases. By continuity of $T_h$, there exists $\delta_\mathcal{D}>0$ such that $T_h(\tau)<0$ for all $0<\tau<\delta_\mathcal{D}$. 

Combining the two observations, all acceptance conditions are fulfilled for $\tau<\min\{\delta_\mathcal{D}, \frac{1-\gamma}{\bar{L}}\}$. Since the backtracking strategy decreases the step size geometrically, it must terminate after finitely many reductions. This completes the proof.
$\hfill\Box$
\end{proof}

Finally, for the implementation of the forward pass, it is important to specify how to evaluate the gradient $\nabla R_{\theta}$ appearing in \eqref{eq:MD}. To do so, two strategies are available. The most direct approach is to employ automatic differentiation to evaluate $\nabla R_{\theta}$ and substitute it into the fixed-point mapping $f_\theta$. Alternatively, following the idea of \cite{chen2016trainable}, one may exploit the explicit form of the regularizer in \eqref{eq:regular1} to derive $\nabla R_{\theta}$ analytically, which can then be incorporated into the forward iteration \eqref{eq:MD}. Although an analytical implementation may offer additional computational efficiency, it is generally less flexible and requires problem-dependent derivations. In particular, explicitly differentiating the terms involved in $R_\theta$, such as the surface area $\mathcal{A}$ and curvature $\mathcal{K}$, often results in cumbersome expressions. Moreover, certain components of the functional may vary depending on the specific application, including the normalization strategies adopted in $\psi_l^a$ and the treatment of convolutions across multiple image channels. By contrast, automatic differentiation requires no model-specific derivation, is straightforward to implement, and benefits from the optimized differentiation routines provided by modern deep learning frameworks such as PyTorch. For these reasons, we adopt the automatic differentiation implementation throughout this work.

\subsection{Gradient computation via implicit differentiation}
After computing the lower-level solution of the bilevel problem \eqref{eq:bilevelproblm} via mirror descent, we evaluate the upper-level objective function
\[
\mathcal{L}(\theta)=\frac{1}{N}\sum_{i=1}^N\ell\left(
x_i^\infty\big(\theta;(x_i^0,y_i)\big),x_i^*\right).
\]
The network parameters $\theta$ are then updated using standard optimization algorithms, such as first-order stochastic methods (e.g., Adam \cite{kingma2014adam}) or quasi-Newton schemes (e.g., L-BFGS \cite{liu1989limited}). Consequently, an efficient procedure for computing gradients with respect to $\theta$ is essential in the DEQ framework, whose infinite depth precludes the direct application of conventional backpropagation.

A well-known method to overcome this issue is to rely on implicit differentiation (see \cite{gilton2021deep}).

For clarity of exposition, we consider a single pair consisting of an observation and its ground truth, denoted by $(y,x^*)$. Denote the loss by
\[
\ell_\theta:=\ell\left(
x^\infty\big(\theta;(x^0,y)\big),x^*\right)
\]
and write $x^\infty\big(\theta;(x^0,y_i)\big)$ as shorthand for $x^\infty$. Since $x^\infty$ satisfies the fixed-point relation $x^\infty=f_\theta\left(x^\infty;y\right)$, we can treat $x^\infty$ as an implicit function of $\theta$. By the chain rule, the gradient of the loss with respect to the network parameters is given by
\begin{equation}
\frac{\partial \ell_\theta}{\partial \theta}
=
\left(\frac{\partial x^\infty}{\partial \theta}\right)^T
\frac{\partial \ell_\theta}{\partial x^\infty}.
\label{eq:chain}
\end{equation}
 For commonly used losses such as the mean squared error, the term $\frac{\partial \ell_\theta}{\partial x^\infty}$ admits a simple closed-form expression. The main computational challenge therefore lies in evaluating $\frac{\partial x^\infty}{\partial \theta}$.

This difficulty can be substantially simplified by exploiting the fixed-point relation satisfied by the equilibrium state. Since $x^\infty$ is a fixed point of $f_\theta(\cdot;y)$, it satisfies
\[
x^\infty = f_\theta(x^\infty;y).
\]
Differentiating both sides with respect to $\theta$ and rearranging the resulting expression yields
\[
\frac{\partial x^\infty}{\partial \theta}
=
\left(I - \left. \frac{\partial f_\theta}{\partial x} \right|_{x = x^\infty} \right)^{-1}
\frac{\partial f_\theta(x^\infty;y)}{\partial \theta},
\]
where $\frac{\partial f_\theta(x^\infty;y)}{\partial \theta}$ denotes the Jacobian of $f_\theta$ with respect to $\theta$, evaluated at the equilibrium point $x^\infty$.
Substituting this identity into the chain rule \eqref{eq:chain} leads to the final gradient formula
\[
\frac{\partial \ell_\theta}{\partial \theta}
=
\left(\frac{\partial f_\theta(x^\infty;y)}{\partial \theta}\right)^T
\left(I - \left. \frac{\partial f_\theta}{\partial x} \right|_{x = x^\infty} \right)^{-T}
\frac{\partial \ell_\theta}{\partial x^\infty}.
\]


The Jacobian $\frac{\partial f_\theta(x^\infty;y)}{\partial \theta}$ can be computed via automatic differentiation, once the fixed point $x^\infty$ is obtained using Algorithm \ref{alg:DEQ_backtrack} for a fixed $\theta$. 
Consequently, the only remaining task is the evaluation of
$\left(I - \left. \frac{\partial f_\theta}{\partial x} \right|_{x = x^\infty} \right)^{-T}$. Rather than solving the associated linear system explicitly, as in \cite{bai2019deep,gilton2021deep}, we adopt a Jacobian-free backpropagation (JFB) strategy \cite{fung2022jfb}. This approach avoids the computational overhead of large-scale linear system solves, requires only fixed memory, and has been observed to provide competitive gradient accuracy in practice.

At an implementation level, JFB approximates the inverse-transpose Jacobian term $\left(I - \left. \frac{\partial f_\theta}{\partial x} \right|_{x = x^\infty} \right)^{-T}$ by the identity operator $I$. Although this approximation may appear heuristic, it admits several rigorous interpretations. One viewpoint is to regard the equilibrium state $f_\theta(x^\infty;y)$ as independent of the network input $x^\infty$ during differentiation, effectively treating the fixed point as if it were obtained after a single converged iteration. For this reason, JFB is often referred to as one-step differentiation \cite{bolte2023one}. An attractive consequence of this interpretation is that the resulting gradient can be computed directly within standard automatic differentiation frameworks. From an optimization perspective, JFB can also be interpreted as a preconditioned gradient method in which the exact gradient is replaced by an alternative descent direction obtained through a suitable preconditioning strategy; see \cite{fung2022jfb} for a detailed analysis. Another interpretation views JFB as the zeroth-order truncation of the Neumann-series expansion of the inverse Jacobian term, as discussed in \cite{daniele2026deep}.

These theoretical justifications, together with its favorable computational efficiency, have made JFB a widely adopted approximation strategy in the training of equilibrium models \cite{zou2023deep,gan2023self}.

\section{Experimental results}\label{sec:Exper}
To quantitatively assess reconstruction quality, we first introduce three widely used quantitative metrics for evaluating reconstruction quality, each characterizing a different aspect of image fidelity.
\begin{itemize}
\item[$\bullet$] PSNR (Peak Signal‑to‑Noise Ratio) \cite{jahne2005digital}: Measures the pixel‑wise mean squared error between the reconstructed image and the ground truth, expressed in decibels, reflecting fidelity at the pixel level.
\item[$\bullet$] SSIM (Structural Similarity Index) \cite{wang2004image}: Computes a weighted combination of luminance, contrast, and structural components, yielding a value in $[0,1]$ that quantifies structural similarity.
\item[$\bullet$] LPIPS (Learned Perceptual Image Patch Similarity) \cite{zhang2018unreasonable}: Based on a pretrained deep neural network (AlexNet in our experiments), computes a distance in feature space between the reconstruction and the ground truth. This metric, typically nonnegative, correlates well with human perceptual judgment.
\end{itemize}
\subsection{Training details}
The proposed model is trained on the DIV2K dataset \cite{agustsson2017ntire}, which contains a diverse collection of high-resolution natural images. 
The 720 training images and 80 validation images are cropped to a size of $(256 \times 256)$ before training.
All experiments are conducted on a workstation equipped with an Intel Core i9-12900K CPU, 125 GB RAM, and an NVIDIA RTX A5000 GPU with 24 GB memory, running Ubuntu 24.04.4 LTS.

\subsection{Parameter and function initialization}
The area-related influence function is initialized by the smooth approximation $\psi^a(z)=\log(1+z^2)$, while the curvature-related influence function is initialized as the quadratic penalty $\psi^c(z)=z^2$. The corresponding RBF coefficients are obtained by least-squares fitting, where the inputs are normalized and rescaled to ensure that they lie within the effective support of the basis functions. The implementation details are available in the accompanying code repository \url{https://github.com/Shengkunyang/DEQ4Gamma}. To ensure positivity, we parameterize $\epsilon\leftarrow \exp(\tilde{\epsilon})$ and $b=\exp(\tilde{b})$, with $\tilde{\epsilon}$ and $\tilde{b}$ initialized such that $\epsilon=10^{-6}$ and $b=0.01$. Since the learned influence functions are expected to adapt the relative scaling between the data fidelity and regularization terms, the balancing parameter $\lambda$ is fixed to 1 throughout all experiments. Given a degraded observation $y$, the initial estimate of the DEQ model is chosen as $x^0=A^\top y$. Each convolution kernel is initialized as a random linear combination of DCT basis filters. During training, the parameters $\theta$ are learned by minimizing the mean squared error loss $\mathcal{L}(\theta)$.
For grayscale image restoration, the network employs $12$ learnable $5\times 5$ convolution kernels, each parameterized by $24$ DCT basis filters ($N_r=24$), together with $33$ Gaussian radial basis functions ($M=33$). The regularizer contains a total of 13 learnable influence functions ($N_a+N_c=13$). For color image restoration, to reduce GPU memory consumption, the numbers of area- and curvature-related influence functions are set to $N_a=8$ and $N_c=3$, respectively, while all other network configurations remain unchanged. The resulting parameterized architecture is illustrated in Fig. \ref{fig:Overall_architecture}.
\begin{figure}[hbtp]
    \centering
    \begin{minipage}{\linewidth}
        \centering
        \includegraphics[width=\linewidth]{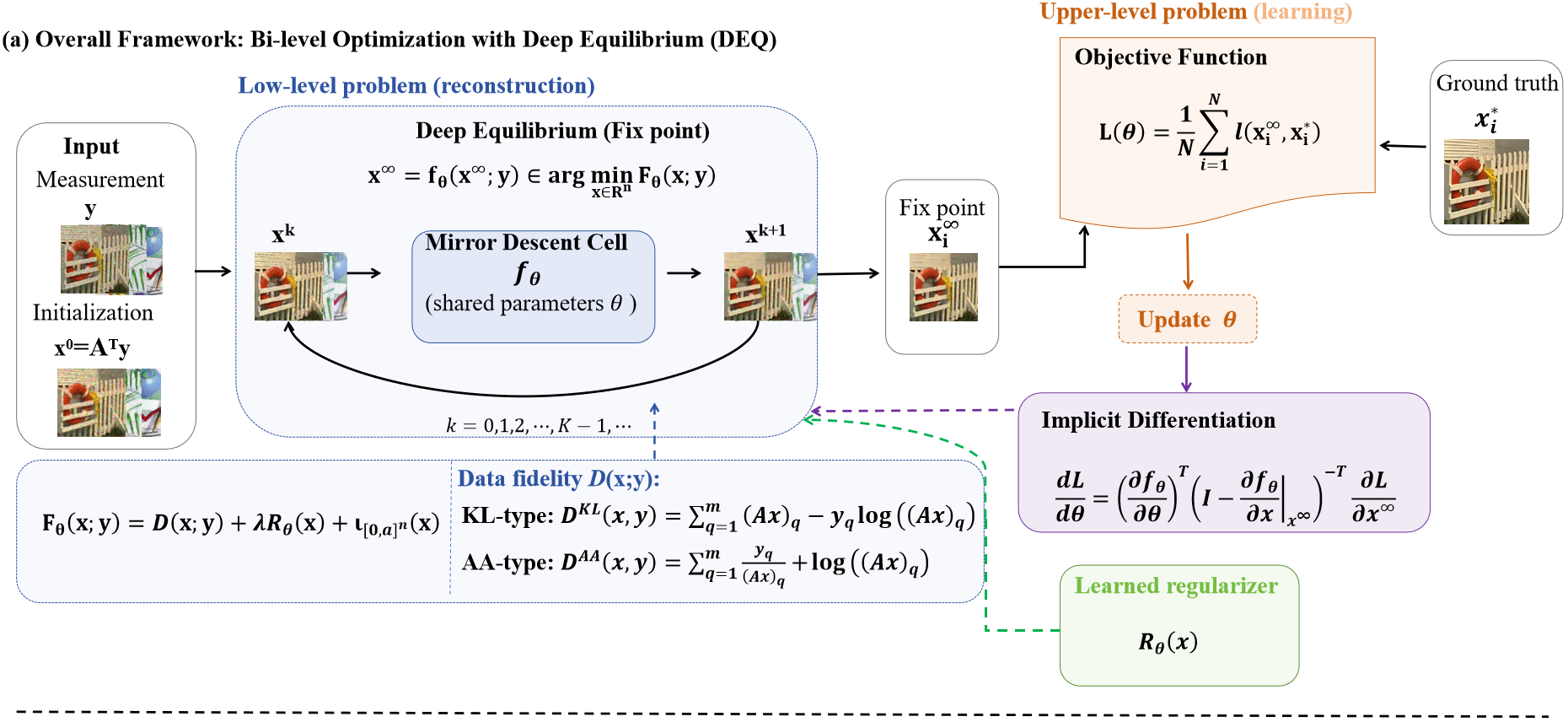}
    \end{minipage}

    \vspace{0.3em}
    \begin{minipage}{0.495\linewidth}
        \centering
        \includegraphics[width=\linewidth]{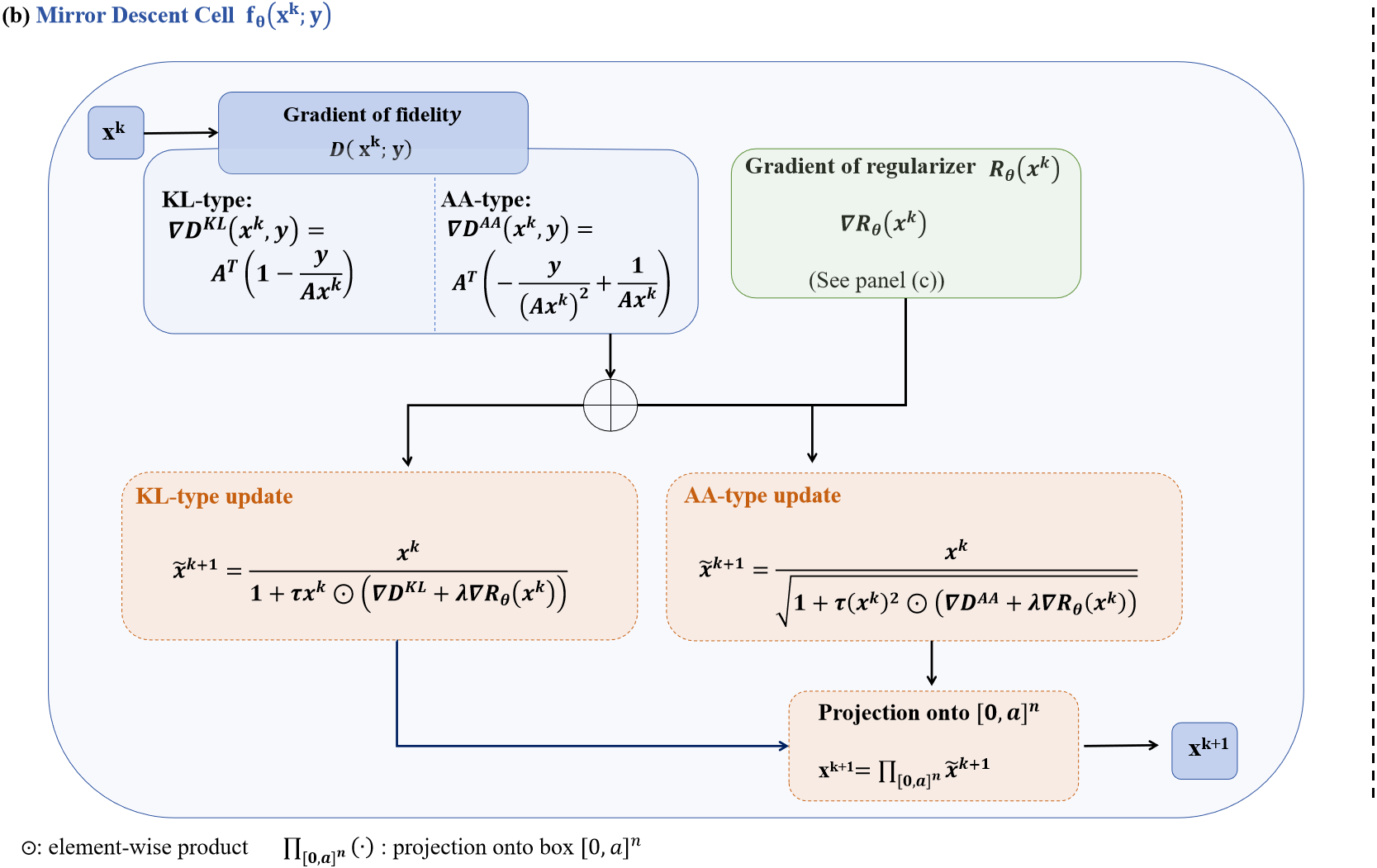}
    \end{minipage}
    \hfill
    \begin{minipage}{0.495\linewidth}
        \centering
        \includegraphics[width=\linewidth,height=0.64\linewidth]{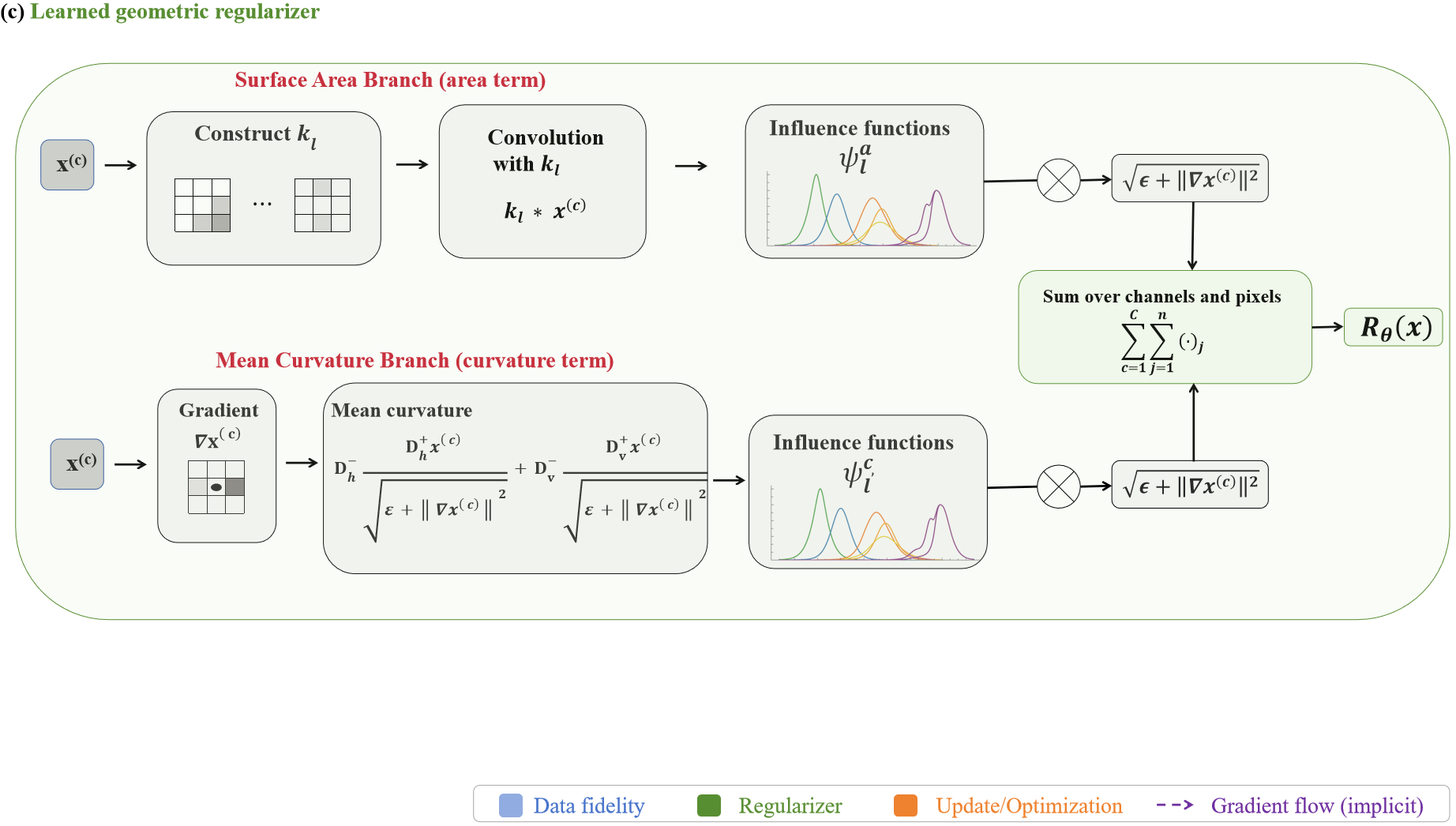}
    \end{minipage}
    \setlength{\abovecaptionskip}{10pt}
    \caption{Overall architecture of the proposed Mirror Descent-based Deep Equilibrium (DEQ) model. (a) Overall pipeline of the proposed framework. (b) Mirror Descent module for equilibrium computation. (c) Geometry-aware learning architecture for estimating the regularization operator.}
    \label{fig:Overall_architecture}
\end{figure}
\subsection{Gray image restoration}
We evaluate the proposed method on grayscale image restoration under two blur settings: a Gaussian blur with standard deviation $2$ and a motion blur with length $5$ and angle $\ang{30}$. After blurring, the images are contaminated with multiplicative Gamma noise with equivalent numbers of looks $L=4$ and $L=10$. To the best of our knowledge, although a deep learning method has recently been proposed for joint deblurring and multiplicative Gamma denoising \cite{li2023deep}, no publicly available implementation is currently available, preventing a direct experimental comparison. Fortunately, the literature provides a number of well-established model-based methods for this challenging restoration task. Accordingly, we compare the proposed method with five representative model-based approaches, namely the AA \cite{aubert2008variational}, RLO \cite{rudin2003multiplicative}, DZ \cite{dong2013convex}, ZWN \cite{zhao2014new}, and MG \cite{yang2025mixed} models. 
For each approach, the regularization parameters are tuned independently for each image to achieve the best reconstruction performance.
For the proposed method, the variants employing the KL-type and AA-type data fidelity terms are denoted by DEQ-KL and DEQ-AA, respectively. The same notation is adopted throughout the color image restoration experiments. 

The experiments are performed on three representative grayscale images: the synthetic image ``Dart'', the SAR image ``SAR'', and the natural image ``Leaves'', which are shown in Figure \ref{fig:gray_allgt}.

\begin{figure}[htbp]
    \centering
    \begin{minipage}{\textwidth}
        \centering
        \subfigcapskip=1pt
        \subfigure[Dart]{
            \includegraphics[width=0.22\textwidth]{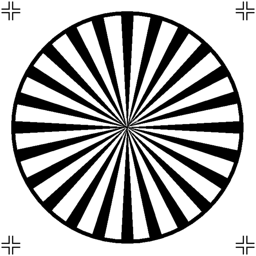}
        }
        \subfigure[SAR]{
            \includegraphics[width=0.22\textwidth]{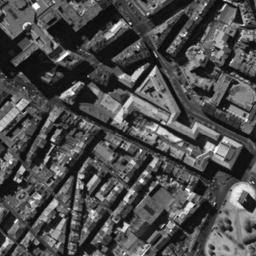}
        }
        \subfigure[Leaves]{
            \includegraphics[width=0.22\textwidth]{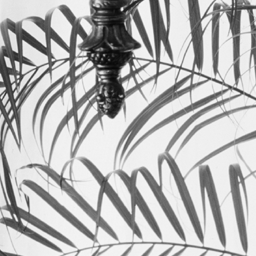}
        }
    \end{minipage}
  \caption{Three test images for grayscale image restoration experiments.}
    \label{fig:gray_allgt}
\end{figure}

\begin{figure}[htbp]
    \centering
    \begin{minipage}{\textwidth}
        \centering
        \subfigcapskip=1pt
        \subfigure[Blurred \& Noisy]{
            \includegraphics[width=0.22\textwidth]{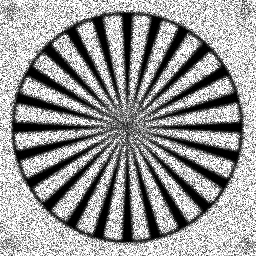}
        }
        \subfigure[AA]{
            \includegraphics[width=0.22\textwidth]{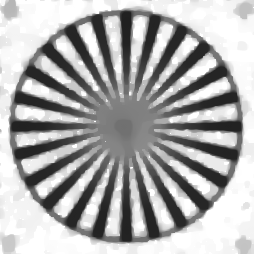}
        }
        \subfigure[RLO]{
            \includegraphics[width=0.22\textwidth]{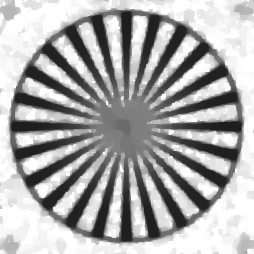}
        }
        \subfigure[DZ]{
            \includegraphics[width=0.22\textwidth]{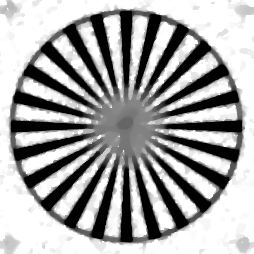}
        }
    \end{minipage}

    \vspace{0.5em}
    \begin{minipage}{\textwidth}
        \centering
        \subfigcapskip=1pt
        \subfigure[ZWN]{
            \includegraphics[width=0.22\textwidth]{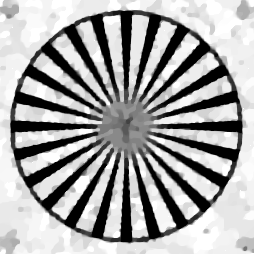}
        }
        \subfigure[MG]{
            \includegraphics[width=0.22\textwidth]{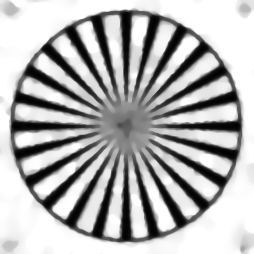}
        }
        \subfigure[DEQ-KL]{
            \includegraphics[width=0.22\textwidth]{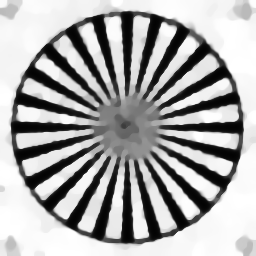}
        }
        \subfigure[DEQ-AA]{
            \includegraphics[width=0.22\textwidth]{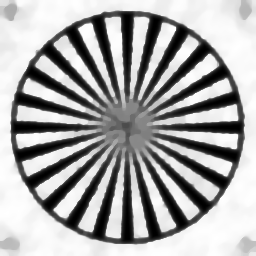}
        }
    \end{minipage}
  \caption{Comparison of restoration results obtained by different methods on the Dart image degraded by Gaussian blur and noise with noise level $L=4$.}
    \label{fig:comparison_gray_Gaussian_meidum_Dart}
\end{figure}

\begin{figure}[htbp]
    \centering
    \begin{minipage}{\textwidth}
        \centering
        \subfigcapskip=1pt
        \subfigure[Blurred \& Noisy]{
            \includegraphics[width=0.22\textwidth]{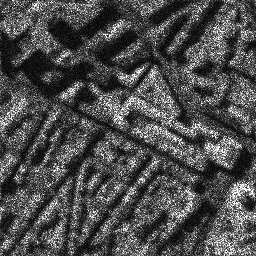}
        }
        \subfigure[AA]{
            \includegraphics[width=0.22\textwidth]{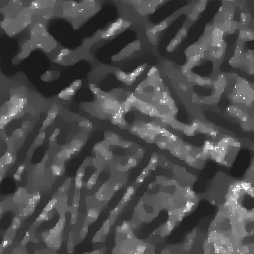}
        }
        \subfigure[RLO]{
            \includegraphics[width=0.22\textwidth]{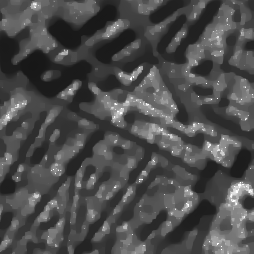}
        }
        \subfigure[DZ]{
            \includegraphics[width=0.22\textwidth]{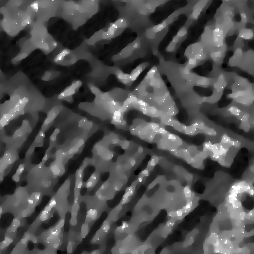}
        }
    \end{minipage}

    \vspace{0.5em}
    \begin{minipage}{\textwidth}
        \centering
        \subfigcapskip=1pt
        \subfigure[ZWN]{
            \includegraphics[width=0.22\textwidth]{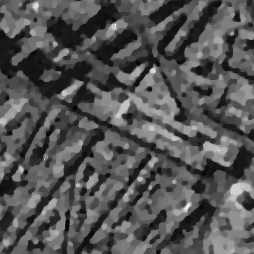}
        }
        \subfigure[MG]{
            \includegraphics[width=0.22\textwidth]{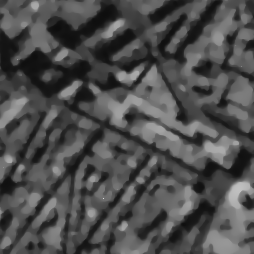}
        }
        \subfigure[DEQ-KL]{
            \includegraphics[width=0.22\textwidth]{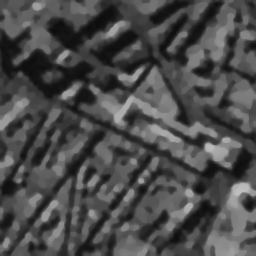}
        }
        \subfigure[DEQ-AA]{
            \includegraphics[width=0.22\textwidth]{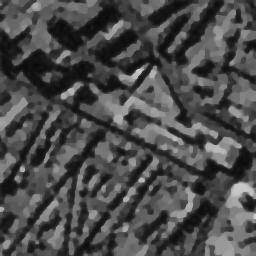}
        }
    \end{minipage}
  \caption{Comparison of restoration results obtained by different methods on the SAR image degraded by Gaussian blur and noise with noise level $L=4$.}
    \label{fig:comparison_gray_Gaussian_meidum_SAR}
\end{figure}
\begin{figure}[htbp]
    \centering
    \begin{minipage}{\textwidth}
        \centering
        \subfigcapskip=1pt
        \subfigure[Blurred \& Noisy]{
            \includegraphics[width=0.22\textwidth]{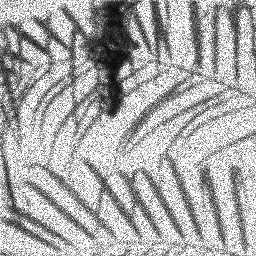}
        }
        \subfigure[AA]{
            \includegraphics[width=0.22\textwidth]{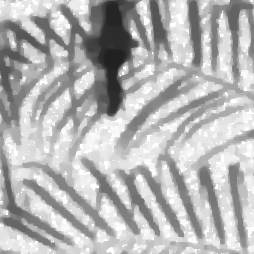}
        }
        \subfigure[RLO]{
            \includegraphics[width=0.22\textwidth]{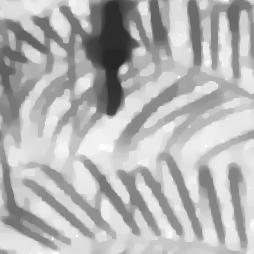}
        }
        \subfigure[DZ]{
            \includegraphics[width=0.22\textwidth]{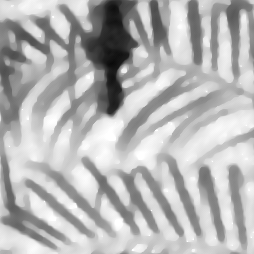}
        }
    \end{minipage}

    \vspace{0.5em}
    \begin{minipage}{\textwidth}
        \centering
        \subfigcapskip=1pt
        \subfigure[ZWN]{
            \includegraphics[width=0.22\textwidth]{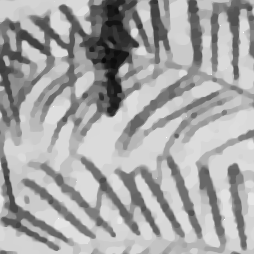}
        }
        \subfigure[MG]{
            \includegraphics[width=0.22\textwidth]{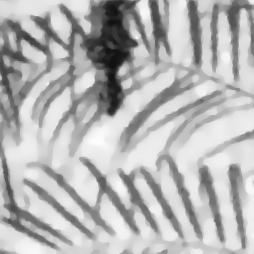}
        }
        \subfigure[DEQ-KL]{
            \includegraphics[width=0.22\textwidth]{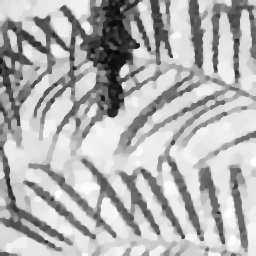}
        }
        \subfigure[DEQ-AA]{
            \includegraphics[width=0.22\textwidth]{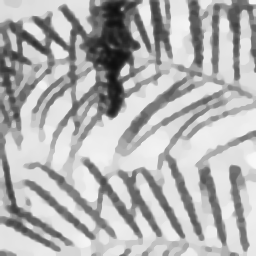}
        }
    \end{minipage}
  \caption{Comparison of restoration results obtained by different methods on the Leaves image degraded by Gaussian blur and noise with noise level $L=10$.}
    \label{fig:comparison_gray_Gaussian_low_Leaves}
\end{figure}

\begin{figure}[htbp]
    \centering
    \begin{minipage}{\textwidth}
        \centering
        \subfigcapskip=1pt
        \subfigure[Blurred \& Noisy]{
            \includegraphics[width=0.22\textwidth]{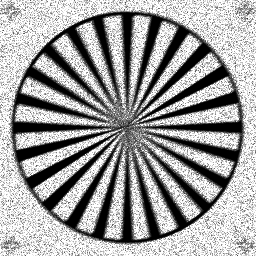}
        }
        \subfigure[AA]{
            \includegraphics[width=0.22\textwidth]{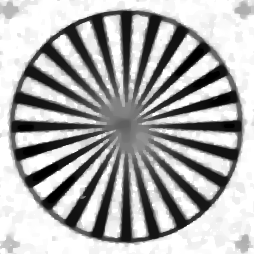}
        }
        \subfigure[RLO]{
            \includegraphics[width=0.22\textwidth]{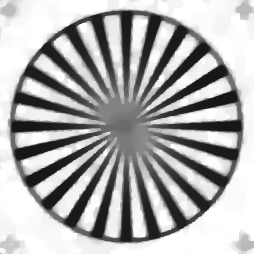}
        }
        \subfigure[DZ]{
            \includegraphics[width=0.22\textwidth]{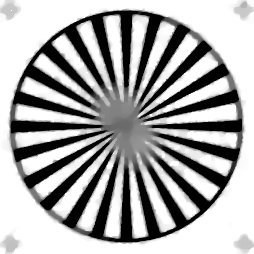}
        }
    \end{minipage}

    \vspace{0.5em}
    \begin{minipage}{\textwidth}
        \centering
        \subfigcapskip=1pt
        \subfigure[ZWN]{
            \includegraphics[width=0.22\textwidth]{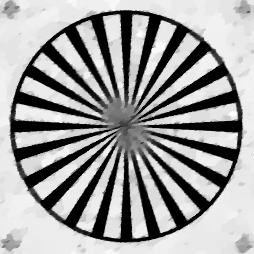}
        }
        \subfigure[MG]{
            \includegraphics[width=0.22\textwidth]{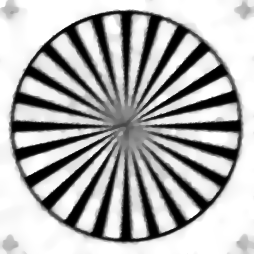}
        }
        \subfigure[DEQ-KL]{
            \includegraphics[width=0.22\textwidth]{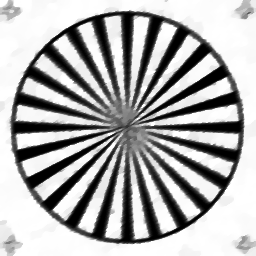}
        }
        \subfigure[DEQ-AA]{
            \includegraphics[width=0.22\textwidth]{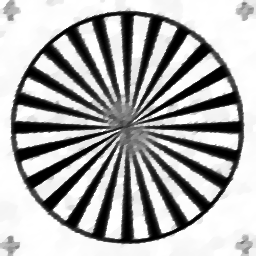}
        }
    \end{minipage}
  \caption{Comparison of restoration results obtained by different methods on the Dart image degraded by motion blur and noise with noise level $L=10$.}
    \label{fig:comparison_gray_Motion_low_Dart}
\end{figure}

\begin{table}[htbp]
\centering
\caption{Comparison of PSNR, SSIM, and LPIPS achieved by different restoration methods on grayscale test images degraded by Gaussian blur and noise under various noise levels.}
\label{tab:comparison_gray_Gaussian_metrics}
\begin{tabular}{ccclccc}
\toprule
Image & $L$ & Method & PSNR$\uparrow$ & SSIM$\uparrow$ & LPIPS$\downarrow$ \\
\midrule

\multirow{14.5}{*}{Dart}
& \multirow{7}{*}{4}
& AA & 13.52 & 0.49 & 0.44 \\
&& RLO & 13.56 & 0.44 & 0.51 \\
&& DZ & 14.49 & 0.50 & 0.52 \\
&& ZWN & \textbf{17.09} & 0.78 & 0.16 \\
&& MG & 15.47 & 0.70 & 0.21 \\
& & DEQ-KL & 16.05 & \textbf{0.79} & 0.15 \\
& & DEQ-AA & 16.07 & 0.76 & \textbf{0.14} \\
\cmidrule(lr){2-6}

& \multirow{7}{*}{10}
& AA & 14.97 & 0.51 & 0.41 \\
&& RLO & 14.97 & 0.52 & 0.39 \\
&& DZ & 15.15 & 0.53 & 0.47 \\
&& ZWN & 17.34 & 0.88 & 0.11 \\
&& MG & 16.19 & 0.73 & 0.21 \\
& & DEQ-KL & \textbf{18.95} & \textbf{0.86} & \textbf{0.09} \\
& & DEQ-AA & 18.80 & 0.85 & \textbf{0.09} \\
\midrule

\multirow{14.5}{*}{SAR}
& \multirow{7}{*}{4}
& AA & 19.41 & 0.46 & 0.36 \\
&& RLO & 19.59 & 0.48 & 0.38 \\
&& DZ & 20.27 & 0.57 & 0.33 \\
&& ZWN & 20.45 & 0.61 & 0.25 \\
&& MG & 20.28 & 0.55 & 0.32 \\
& & DEQ-KL & 20.65 & 0.57 & 0.32 \\
& & DEQ-AA & \textbf{21.45} & \textbf{0.64} & \textbf{0.23} \\
\cmidrule(lr){2-6}

& \multirow{7}{*}{10}
& AA & 20.92 & 0.58 & 0.34 \\
&& RLO & 20.98 & 0.58 & 0.33 \\
&& DZ & 21.18 & 0.60 & 0.30 \\
&& ZWN & 21.79 & 0.66 & 0.23 \\
&& MG & 21.54 & 0.63 & 0.28 \\
& & DEQ-KL & 22.07 & 0.70 & 0.21 \\
& & DEQ-AA & \textbf{22.69} & \textbf{0.72} & \textbf{0.18} \\
\midrule

\multirow{14.5}{*}{Leaves}
& \multirow{7}{*}{4}
& AA & 16.46 & 0.44 & 0.51 \\
&& RLO & 16.47 & 0.43 & 0.57 \\
&& DZ & 16.93 & 0.46 & 0.54 \\
&& ZWN & 17.57 & 0.56 & 0.40 \\
&& MG & 17.62 & 0.57 & \textbf{0.36} \\
& & DEQ-KL & 17.81 & 0.60 & 0.37 \\
& & DEQ-AA & \textbf{18.07} & \textbf{0.61} & \textbf{0.36} \\
\cmidrule(lr){2-6}

& \multirow{7}{*}{10}
& AA & 17.93 & 0.51 & 0.54 \\
&& RLO & 17.77 & 0.53 & 0.46 \\
&& DZ & 18.14 & 0.54 & 0.47 \\
&& ZWN & 17.87 & 0.65 & 0.30 \\
&& MG & 18.47 & 0.62 & 0.31 \\
& & DEQ-KL & 19.91 & 0.71 & 0.28 \\
& & DEQ-AA & \textbf{20.02} & \textbf{0.72} & \textbf{0.24} \\
\bottomrule
\end{tabular}
\end{table}

\begin{table}[htbp]
\centering
\caption{Comparison of PSNR, SSIM, and LPIPS achieved by different restoration methods on grayscale test images degraded by motion blur and noise under various noise levels.}
\label{tab:comparison_gray_motion_metrics}
\begin{tabular}{ccclccc}
\toprule
Image & $L$ & Method & PSNR$\uparrow$ & SSIM$\uparrow$ & LPIPS$\downarrow$ \\
\midrule

\multirow{14.5}{*}{Dart}
& \multirow{7}{*}{4}
& AA & 12.13 & 0.49 & 0.51 \\
&& RLO & 14.09 & 0.51 & 0.38 \\
&& DZ & 15.22 & 0.56 & 0.40 \\
&& ZWN & 17.20 & 0.69 & 0.22 \\
&& MG & 16.35 & 0.68 & 0.22 \\
& & DEQ-KL & \textbf{17.72} & \textbf{0.83} & \textbf{0.17} \\
& & DEQ-AA & 17.64 & \textbf{0.83} & \textbf{0.17} \\
\cmidrule(lr){2-6}

& \multirow{7}{*}{10}
& AA & 15.75 & 0.59 & 0.32 \\
&& RLO & 15.88 & 0.56 & 0.34 \\
&& DZ & 17.37 & 0.79 & 0.17 \\
&& ZWN & 17.97 & \textbf{0.88} & \textbf{0.11} \\
&& MG & 16.87 & 0.78 & 0.17 \\
& & DEQ-KL & 18.17 & 0.83 & 0.14 \\
& & DEQ-AA & \textbf{18.77} & 0.85 & 0.12 \\
\midrule

\multirow{14.5}{*}{SAR}
& \multirow{7}{*}{4}
& AA & 20.03 & 0.53 & 0.37 \\
&& RLO & 19.94 & 0.51 & 0.37 \\
&& DZ & 20.69 & 0.62 & 0.31 \\
&& ZWN & 20.52 & 0.63 & \textbf{0.25} \\
&& MG & 21.33 & 0.63 & 0.26 \\
& & DEQ-KL & 20.77 & 0.62 & 0.39 \\
& & DEQ-AA & \textbf{21.53} & \textbf{0.65} & 0.28 \\
\cmidrule(lr){2-6}

& \multirow{7}{*}{10}
& AA & 21.57 & 0.63 & 0.28 \\
&& RLO & 21.64 & 0.63 & 0.29 \\
&& DZ & 22.02 & 0.66 & 0.28 \\
&& ZWN & 22.63 & 0.72 & 0.21 \\
&& MG & 22.34 & 0.69 & 0.24 \\
& & DEQ-KL & 22.69 & 0.72 & 0.25 \\
& & DEQ-AA & \textbf{23.16} & \textbf{0.74} & \textbf{0.20} \\
\midrule

\multirow{14.5}{*}{Leaves}
& \multirow{7}{*}{4}
& AA & 16.42 & 0.44 & 0.56 \\
&& RLO & 16.49 & 0.45 & 0.52 \\
&& DZ & 16.94 & 0.48 & 0.59 \\
&& ZWN & 17.51 & 0.53 & 0.47 \\
&& MG & 17.84 & 0.59 & \textbf{0.34} \\
& & DEQ-KL & 18.30 & 0.63 & 0.41 \\
& & DEQ-AA & \textbf{18.40} & \textbf{0.64} & 0.38 \\
\cmidrule(lr){2-6}

& \multirow{7}{*}{10}
& AA & 17.32 & 0.55 & 0.47 \\
&& RLO & 18.41 & 0.56 & 0.45 \\
&& DZ & 18.60 & 0.58 & 0.43 \\
&& ZWN & 18.44 & 0.68 & 0.34 \\
&& MG & 19.16 & 0.67 & \textbf{0.28} \\
& & DEQ-KL & 19.77 & 0.67 & 0.34 \\
& & DEQ-AA & \textbf{20.01} & \textbf{0.73} & 0.30 \\
\bottomrule
\end{tabular}
\end{table}

Figures \ref{fig:comparison_gray_Gaussian_meidum_Dart}-\ref{fig:comparison_gray_Motion_low_Dart} present representative restoration results on the grayscale test images. Overall, the proposed method consistently produces superior visual quality compared with the variational-based baselines, particularly for SAR and natural images. For example, in Fig. \ref{fig:comparison_gray_Gaussian_meidum_SAR}, our method recovers sharper road structures while effectively suppressing isolated dark artifacts. Similarly, the fine leaf textures are more faithfully reconstructed in Fig. \ref{fig:comparison_gray_Gaussian_low_Leaves}. For the synthetic Dart image, however, variational methods remain competitive under severe degradations, especially the ZWN model. As shown in Fig. \ref{fig:comparison_gray_Gaussian_meidum_Dart}, ZWN produces higher-contrast dartboard patterns, whereas our method achieves more effective deblurring. Under a lower noise level, the advantage of the proposed method becomes more pronounced. As illustrated in Fig. \ref{fig:comparison_gray_Motion_low_Dart}, our reconstruction preserves sharper cross patterns in the four corners as well as finer structures around the dartboard center.

Quantitative comparisons are reported in Tables \ref{tab:comparison_gray_Gaussian_metrics} and \ref{tab:comparison_gray_motion_metrics} for Gaussian and motion blur, respectively, in terms of PSNR, SSIM, and LPIPS, where the best result in each setting is highlighted in bold. For the synthetic Dart image with Gaussian blur and noise level $L=4$, the proposed method achieves a slightly lower PSNR than the ZWN model, while yielding superior SSIM and LPIPS values, indicating a better balance between structural fidelity and perceptual quality. Apart from this single case, our method consistently outperforms all competing variational models on the remaining grayscale test images. In particular, for the challenging natural image Leaves, the proposed method improves the PSNR by at least 0.46 dB over the best competing baseline under all considered degradation settings.

Finally, DEQ-AA consistently achieves slightly better performance than DEQ-KL across the test set, as reflected, for example, by the quantitative results on Leaves in Tables \ref{tab:comparison_gray_Gaussian_metrics} and \ref{tab:comparison_gray_motion_metrics}. A possible explanation is that the joint deblurring and Gamma denoising task considered in this work exhibits a more complex degradation model, under which the AA-type fidelity provides a more accurate statistical characterization of Gamma noise than the KL-type fidelity. A deeper theoretical and experimental investigation of this observation is left for future work.

\subsection{Color image restoration}
	We evaluate the proposed method on a diverse collection of color images, including 3 images from Set3C \footnote{\url{https://huggingface.co/datasets/deepinv/set3c}}, 24 from Kodak24 \footnote{\url{https://www.kaggle.com/datasets/drxinchengzhu/kodak24}}, 17 from BSD500 \cite{arbelaez2010contour}, and 11 from Set14 \cite{zeyde2010single}. Degraded observations are generated by first applying either an $15\times15$ Gaussian blur kernel with standard deviation $2$ or a realistic motion blur kernel \cite{levin2009understanding}, followed by multiplicative Gamma noise with equivalent numbers of looks $L=4$ and $L=10$.

Similar to the grayscale restoration setting, a method for joint deblurring and multiplicative Gamma denoising of color images has been proposed in \cite{wang2021color}. 
Unfortunately, its source code is not publicly available, preventing us from including it in our experimental comparison.
As a representative learning-based baseline, we therefore adopt the DEQ framework of \cite{daniele2026deep}. Although originally designed for Poisson image restoration, its KL-type data-fidelity model has also been successfully applied to multiplicative Gamma noise removal \cite{steidl2010removing,zhang2022image}, and coincides with the fidelity term employed in our DEQ-KL model. To ensure a fair comparison, we regenerate the training data according to the degradation model adopted in this work and retrain the network of \cite{daniele2026deep} using the corresponding blurred Gamma-noise observations. The retrained network follows the DEQ framework and adopts a Regularization by Denoising (RED) parameterization; accordingly, we denote it by DEQ-RED. The principal difference between DEQ-RED and DEQ-KL lies in the regularization model: DEQ-RED employs an implicit CNN regularizer, whereas DEQ-KL incorporates an explicit geometry-aware regularizer together with the proposed positivity-preserving backtracking strategy.

\begin{figure}[h]
    \centering
 \begin{minipage}{0.3\textwidth}
        \centering
        \subfigure[Clean]{
            \includegraphics[width=\linewidth]{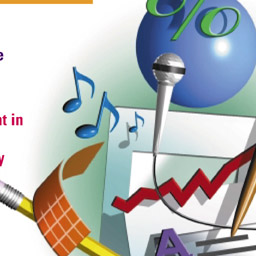}
        }

        \vspace{-1.5mm}
        {\scriptsize (PSNR$\uparrow$, SSIM$\uparrow$, LPIPS$\downarrow$)}
    \end{minipage}
    \hspace{0.02\textwidth}
    \begin{minipage}{0.3\textwidth}
        \centering
        \subfigure[Blurred \& Noisy]{
            \includegraphics[width=\linewidth]{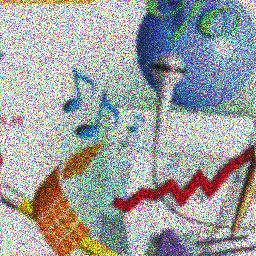}
        }

        \vspace{-1.5mm}
        {\scriptsize (8.25,0.10,1.25)}
    \end{minipage}

    \vspace{0.5em}

 \begin{minipage}{0.3\textwidth}
        \centering
        \subfigure[DEQ-RED \cite{daniele2026deep}]{
            \includegraphics[width=\linewidth]{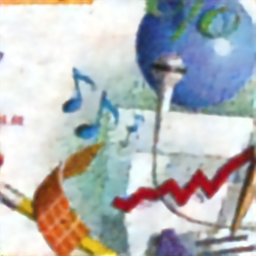}
        }

        \vspace{-2mm}
        {\scriptsize (22.17, 0.75, 0.36)}
    \end{minipage}
    \hspace{0.02\textwidth}
    \begin{minipage}{0.3\textwidth}
        \centering
        \subfigure[DEQ-KL]{
            \includegraphics[width=\linewidth]{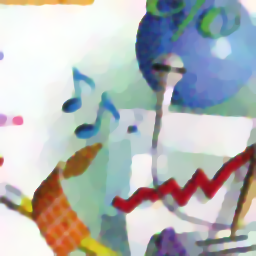}
        }

        \vspace{-2mm}
        {\scriptsize (22.25, 0.78, 0.24)}
    \end{minipage}
    \hspace{0.02\textwidth}
    \begin{minipage}{0.3\textwidth}
        \centering
        \subfigure[DEQ-AA]{
            \includegraphics[width=\linewidth]{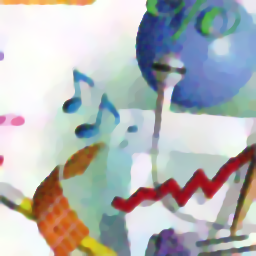}
        }

        \vspace{-2mm}
        {\scriptsize (22.26, 0.79, 0.24)}
    \end{minipage}
    \caption{Comparison of restoration results obtained by different methods on the Magazine image degraded by Gaussian blur and noise with noise level $L=4$.}
    \label{fig:comparison_color_Gaussian_meidum_Magazine}
\end{figure}

\begin{figure}[h]
    \centering
 \begin{minipage}{0.3\textwidth}
        \centering
        \subfigure[Clean]{
            \includegraphics[width=\linewidth]{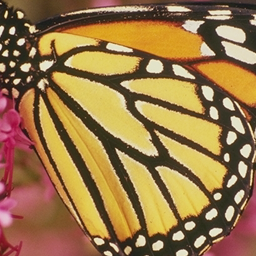}
        }

        \vspace{-1.5mm}
        {\scriptsize (PSNR$\uparrow$, SSIM$\uparrow$, LPIPS$\downarrow$)}
    \end{minipage}
    \hspace{0.02\textwidth}
    \begin{minipage}{0.3\textwidth}
        \centering
        \subfigure[Blurred \& Noisy]{
            \includegraphics[width=\linewidth]{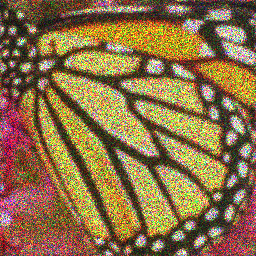}
        }

        \vspace{-1.5mm}
        {\scriptsize (11.47,0.23,0.79)}
    \end{minipage}

    \vspace{0.5em}

 \begin{minipage}{0.3\textwidth}
        \centering
        \subfigure[DEQ-RED \cite{daniele2026deep}]{
            \includegraphics[width=\linewidth]{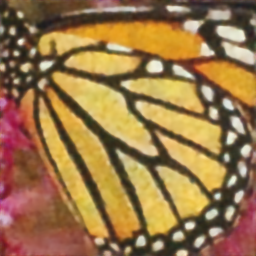}
        }

        \vspace{-2mm}
        {\scriptsize (21.53, 0.72, 0.27)}
    \end{minipage}
    \hspace{0.02\textwidth}
    \begin{minipage}{0.3\textwidth}
        \centering
        \subfigure[DEQ-KL]{
            \includegraphics[width=\linewidth]{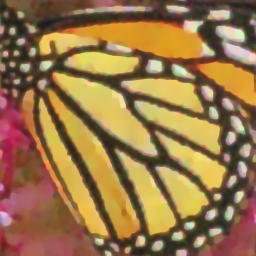}
        }

        \vspace{-2mm}
        {\scriptsize (21.62, 0.74, 0.23)}
    \end{minipage}
    \hspace{0.02\textwidth}
    \begin{minipage}{0.3\textwidth}
        \centering
        \subfigure[DEQ-AA]{
            \includegraphics[width=\linewidth]{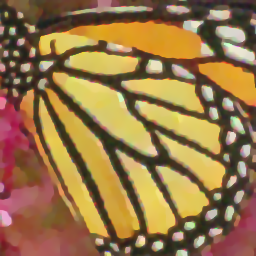}
        }

        \vspace{-2mm}
        {\scriptsize (21.74, 0.75, 0.18)}
    \end{minipage}
    \caption{Comparison of restoration results obtained by different methods on the Butterfly image degraded by Gaussian blur and noise with noise level $L=4$.}
    \label{fig:comparison_color_Gaussian_meidum_Butterfly}
\end{figure}

\begin{figure}[h]
    \centering
 \begin{minipage}{0.3\textwidth}
        \centering
        \subfigure[Clean]{
            \includegraphics[width=\linewidth]{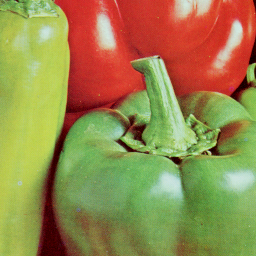}
        }

        \vspace{-2mm}
        {\scriptsize (PSNR$\uparrow$, SSIM$\uparrow$, LPIPS$\downarrow$)}
    \end{minipage}
    \hspace{0.02\textwidth}
    \begin{minipage}{0.3\textwidth}
        \centering
        \subfigure[Blurred \& Noisy]{
            \includegraphics[width=\linewidth]{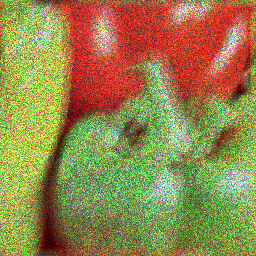}
        }

        \vspace{-2mm}
        {\scriptsize (14.62,0.09,0.94)}
    \end{minipage}

    \vspace{0.5em}

 \begin{minipage}{0.3\textwidth}
        \centering
        \subfigure[DEQ-RED \cite{daniele2026deep}]{
            \includegraphics[width=\linewidth]{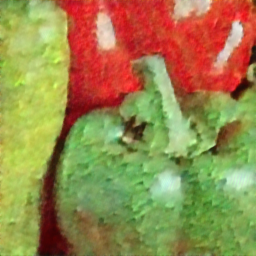}
        }

	\vspace{-2mm}
        {\scriptsize (24.11, 0.53, 0.43)}
    \end{minipage}
    \hspace{0.02\textwidth}
    \begin{minipage}{0.3\textwidth}
        \centering
        \subfigure[DEQ-KL]{
            \includegraphics[width=\linewidth]{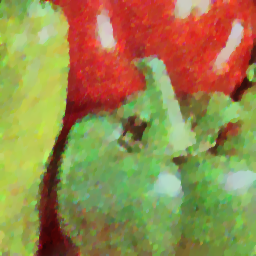}
        }

        \vspace{-2mm}
        {\scriptsize (24.12, 0.55, 0.36)}
    \end{minipage}
    \hspace{0.02\textwidth}
    \begin{minipage}{0.3\textwidth}
        \centering
        \subfigure[DEQ-AA]{
            \includegraphics[width=\linewidth]{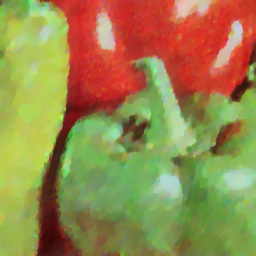}
        }

        \vspace{-2mm}
        {\scriptsize (24.52, 0.60, 0.30)}
    \end{minipage}
    \caption{Comparison of restoration results obtained by different methods on the Peppers image degraded by motion blur and noise with noise level $L=10$.}
    \label{fig:comparison_color_motion_low_Peppers}
\end{figure}

\begin{table}[htbp]
\centering
\caption{Average quantitative results (PSNR, SSIM, and LPIPS) on the color test set under different blur types and noise levels.}
\label{tab:comparison_metrics_color}

\renewcommand{\arraystretch}{1.25}

\begin{tabular}{c l c c c}
\toprule
$L$ & Method & PSNR$\uparrow$ & SSIM$\uparrow$ & LPIPS$\downarrow$ \\
\midrule

\multicolumn{5}{c}{\textbf{Gaussian Blur}}\\
\midrule

\multirow{3}{*}{4}
& DEQ-RED \cite{daniele2026deep}& 22.87 & 0.58 & 0.55 \\
& DEQ-KL  & 23.09 & 0.59 & 0.48 \\
& DEQ-AA  & \textbf{23.17} & \textbf{0.60} & \textbf{0.47} \\
\cmidrule(lr){1-5}

\multirow{3}{*}{10}
& DEQ-RED \cite{daniele2026deep}& 23.67 & 0.60 & 0.51 \\
& DEQ-KL  & 23.79 & 0.61 & 0.44 \\
& DEQ-AA  & \textbf{24.03} & \textbf{0.63} & \textbf{0.41} \\

\midrule

\multicolumn{5}{c}{\textbf{Motion Blur}}\\
\midrule

\multirow{3}{*}{4}
& DEQ-RED \cite{daniele2026deep}& 20.55 & 0.46 & 0.63 \\
& DEQ-KL  & 20.62 & 0.48 & 0.60 \\
& DEQ-AA  & \textbf{20.99} & \textbf{0.49} & \textbf{0.55} \\
\cmidrule(lr){1-5}

\multirow{3}{*}{10}
& DEQ-RED \cite{daniele2026deep}& 21.08 & 0.49 & 0.58 \\
& DEQ-KL  & 21.92 & \textbf{0.54} & 0.49 \\
& DEQ-AA  & \textbf{21.97} & \textbf{0.54} & \textbf{0.48} \\
\bottomrule
\end{tabular}
\end{table}

Table \ref{tab:comparison_metrics_color} reports the quantitative results of the proposed methods and the competing approaches on the color image test set, where the best performance in each setting is highlighted in bold. Under both Gaussian and motion blur degradations, DEQ-KL achieves performance comparable to that of DEQ-RED, indicating that incorporating an explicit variational regularizer does not compromise restoration quality relative to implicit regularization. A further advantage of the proposed framework is its parameter efficiency. Owing to the incorporation of explicit geometric priors, our model contains only $8\times 24+8\times 33+3\times 33 +2\approx 600$ trainable parameters, whereas DEQ-RED requires approximately $10^5$ parameters. This substantial reduction in model complexity is expected to improve training efficiency while maintaining competitive performance. Moreover, DEQ-AA consistently outperforms DEQ-KL across most test cases, which is consistent with the observations from the grayscale image experiments.

Figures \ref{fig:comparison_color_Gaussian_meidum_Magazine}-\ref{fig:comparison_color_motion_low_Peppers} present representative visual comparisons. Under Gaussian blur, the proposed DEQ-AA method reconstructs sharper geometric structures and finer textures, as illustrated by the butterfly patterns in Figure \ref{fig:comparison_color_Gaussian_meidum_Butterfly}.

Another notable observation is that DEQ-RED, which relies on an implicit neural prior for image reconstruction, occasionally introduces visually noticeable artifacts. This phenomenon is also reflected by its relatively inferior LPIPS scores reported in Table \ref{tab:comparison_metrics_color}. For example, although DEQ-RED and the proposed method achieve comparable PSNR values in the Gaussian blur experiment shown in Figure \ref{fig:comparison_color_Gaussian_meidum_Magazine}, DEQ-RED produces visible shadow-like artifacts that degrade perceptual quality. The difference becomes more pronounced under the more challenging real motion blur degradation. As illustrated in Figure \ref{fig:comparison_color_motion_low_Peppers}, DEQ-RED generates noticeable blotchy artifacts in the Peppers image, whereas the proposed method preserves cleaner structures and more natural visual appearance. We attribute this behavior to the explicit geometry-aware variational regularization, which provides stronger structural priors and effectively suppresses the artifacts that may arise in purely data-driven restoration models.

\subsection{On the convergence of DEQ models at inference}
In all numerical experiments, the iterative solver is terminated once the relative error satisfies the stopping criterion
\begin{equation*}
\frac{\|x^{k+1}-x^k\|}{\|x^{k+1}\|}\le \epsilon_S.
\end{equation*}
To better illustrate the convergence behavior of the proposed method, we adopt a stringent stopping tolerance of $\epsilon_S=10^{-6}$ in this subsection. In all other experiments, we use the empirical choice $\epsilon_S=10^{-5}$, which provides a good compromise between reconstruction accuracy and computational efficiency.

To illustrate the convergence behavior of the proposed DEQ models, we report the evolution of the objective value $\Psi$ and the PSNR for image Magazine under Gaussian blur with Gamma noise of level $L=4$; the corresponding results for DEQ-KL and DEQ-AA are shown in Fig. \ref{fig:Convergence}. It can be observed that the objective values of both models decrease monotonically and eventually stabilize. In terms of reconstruction quality, the PSNR of DEQ-AA reaches a stable value after approximately 700 iterations, whereas DEQ-KL requires about 1000 iterations to converge. This observation indicates that the minimization of the proposed objective functional is well aligned with improvements in reconstruction quality, suggesting that convergence of the optimization process is accompanied by convergence to a high-quality reconstruction.

Furthermore, since a backtracking strategy is employed, the step size is adaptively adjusted during the iterations, which leads to discontinuities in the convergence curves of both DEQ-KL and DEQ-AA. The iterations at which the step size is updated are indicated by red vertical lines in Fig. \ref{fig:Convergence}. It can be observed that the objective energy continues to decrease and the PSNR keeps increasing after these adjustments, demonstrating the effectiveness of the adopted backtracking strategy.
\begin{figure}[htbp]
    \centering
    \begin{minipage}{\textwidth}
        \centering
        \subfigcapskip=1pt
        \subfigure[DEQ-KL]{
            \includegraphics[width=0.48\textwidth]{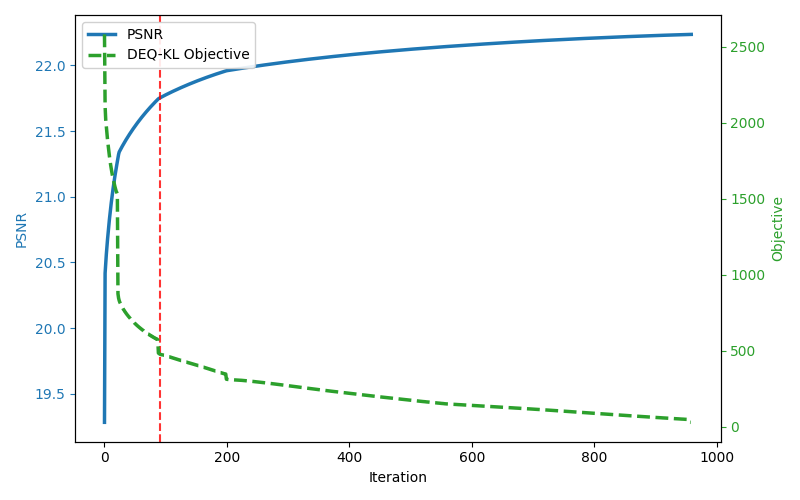}
        }
        \subfigure[DEQ-AA]{
            \includegraphics[width=0.48\textwidth]{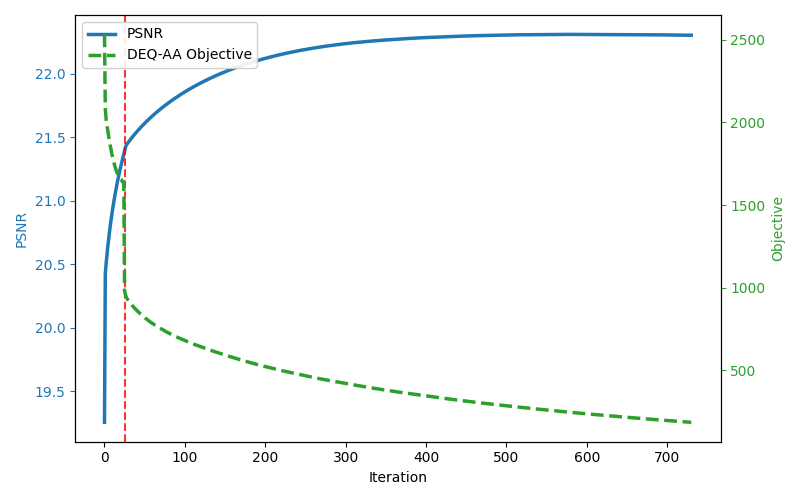}
        }
    \end{minipage}
  \caption{Convergence verification of (a) DEQ-KL and (b) DEQ-AA methods, showing PSNR and energy variations with iterations.}
    \label{fig:Convergence}
\end{figure}
\subsection{Learned filters and characteristic influence functions in neural networks}

One of the main contributions of the developed DEQ model is the possibility to explain the reconstructions as critical points of an energy functional with interpretable learned regularization term \eqref{eq:regular1}.
To gain further insight into the learned image priors, we visualize representative convolution filters and influence functions in this subsection.

Fig. \ref{fig:learned_filters} compares the convolution filters before and after training, where the color intensity indicates the magnitude and sign of the filter coefficients. Although the randomly initialized filters exhibit no discernible structure, the learned filters evolve into organized patterns that capture meaningful local image features. To further reveal the learned representation, Fig. \ref{fig:psi_evolution} visualizes the responses
$\sum_{l=1}^{N_a}\psi_l^a(k_l*y)$, obtained by applying the learned filters and influence functions to the degraded observation $y$ of the grayscale \emph{Parrot} image corrupted by Gaussian blur and multiplicative Gamma noise with $L=4$. As training progresses, these responses become progressively closer to the corresponding ground-truth image, indicating that the learned representation effectively captures the underlying image structure.
\begin{figure}[htbp]
\centering
\setlength{\tabcolsep}{2pt}

\begin{tabular}{cccc@{\hspace{1cm}}cccc}

\multicolumn{4}{c}{\textbf{Initialization}} &
\multicolumn{4}{c}{\textbf{After training}} \\[2mm]

\includegraphics[width=0.10\textwidth]{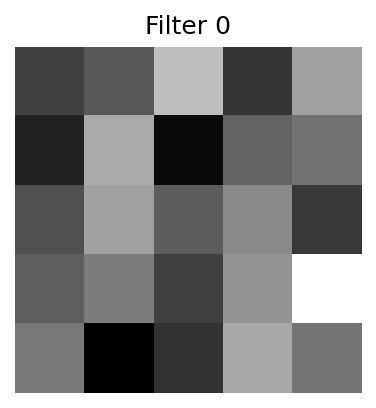} &
\includegraphics[width=0.10\textwidth]{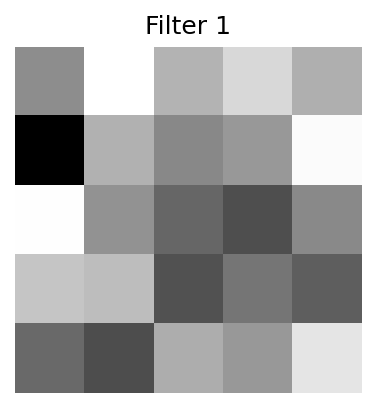} &
\includegraphics[width=0.10\textwidth]{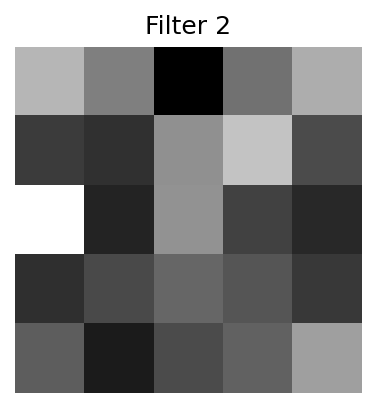} &
\includegraphics[width=0.10\textwidth]{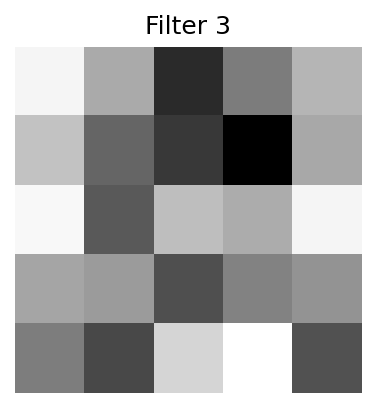} &
\includegraphics[width=0.10\textwidth]{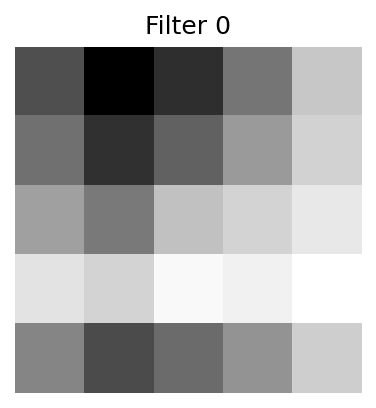} &
\includegraphics[width=0.10\textwidth]{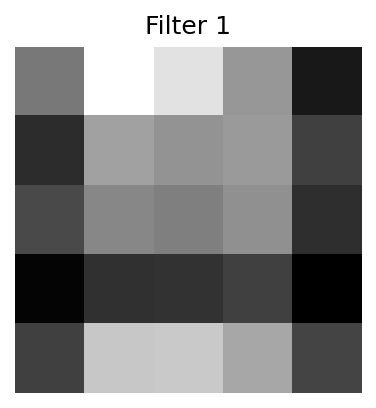} &
\includegraphics[width=0.10\textwidth]{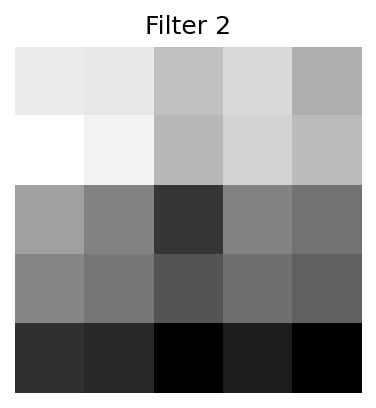} &
\includegraphics[width=0.10\textwidth]{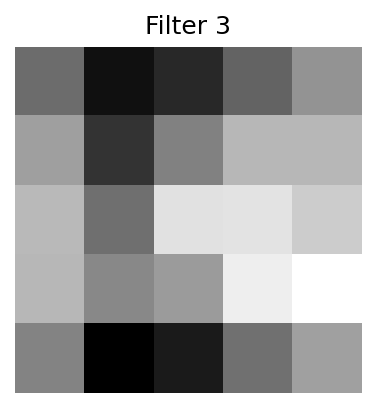} \\

\includegraphics[width=0.10\textwidth]{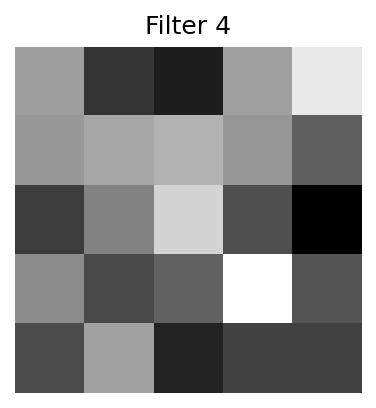} &
\includegraphics[width=0.10\textwidth]{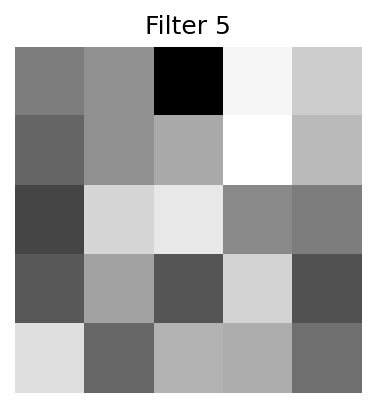} &
\includegraphics[width=0.10\textwidth]{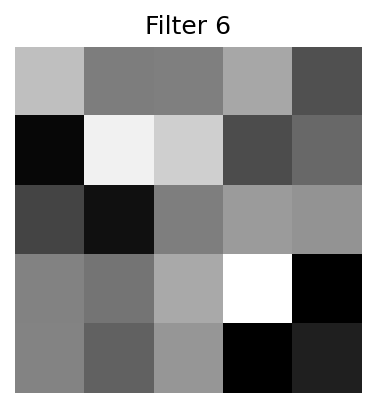} &
\includegraphics[width=0.10\textwidth]{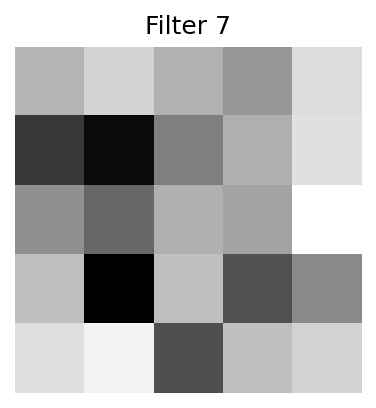} &
\includegraphics[width=0.10\textwidth]{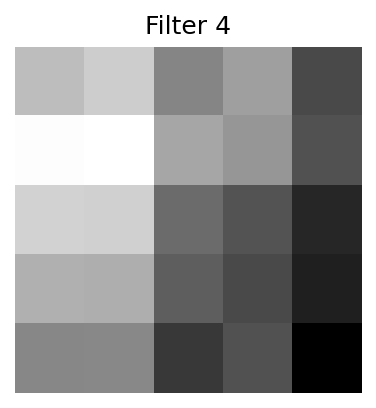} &
\includegraphics[width=0.10\textwidth]{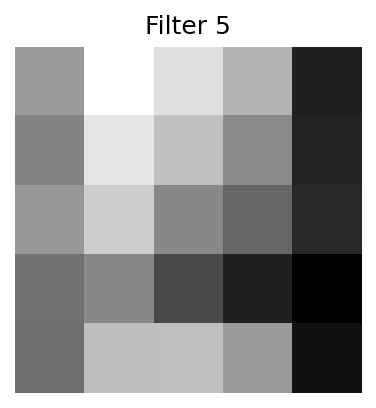} &
\includegraphics[width=0.10\textwidth]{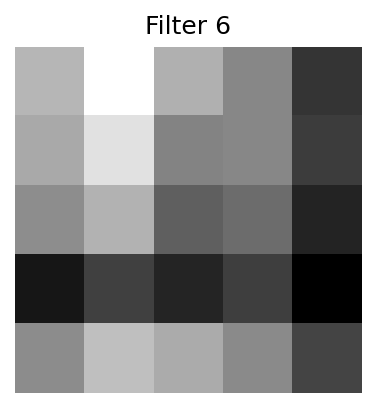} &
\includegraphics[width=0.10\textwidth]{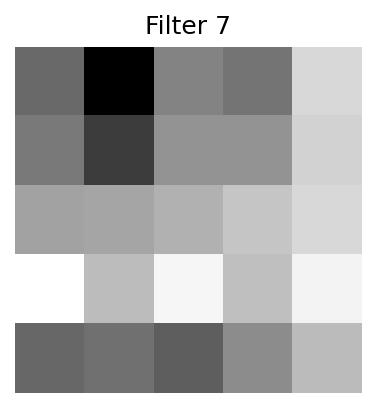} \\

\includegraphics[width=0.10\textwidth]{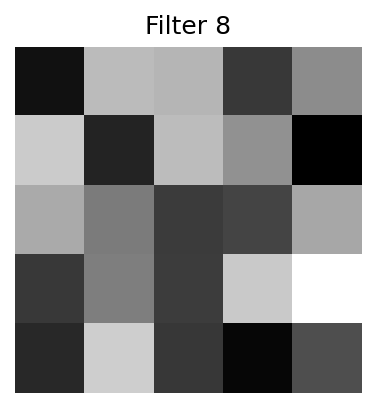} &
\includegraphics[width=0.10\textwidth]{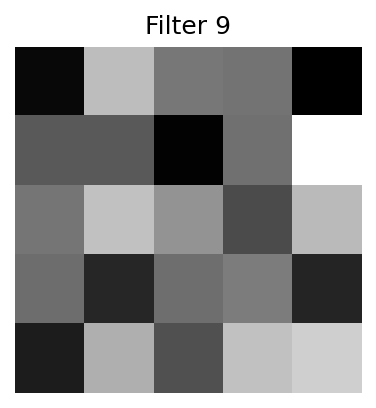} &
\includegraphics[width=0.10\textwidth]{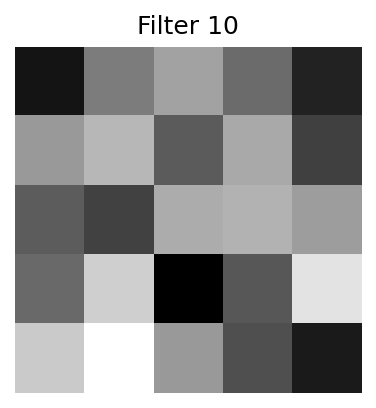} &
\includegraphics[width=0.10\textwidth]{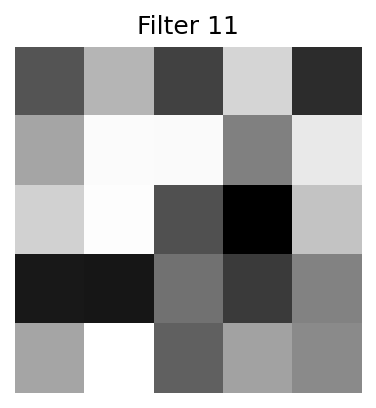} &
\includegraphics[width=0.10\textwidth]{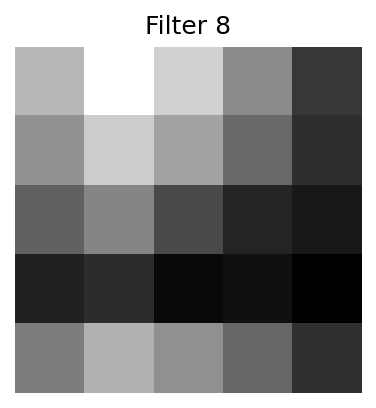} &
\includegraphics[width=0.10\textwidth]{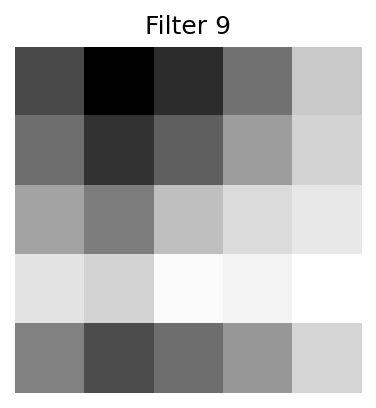} &
\includegraphics[width=0.10\textwidth]{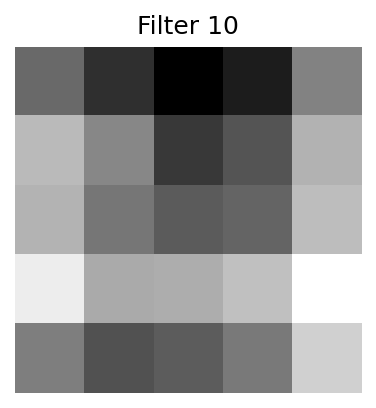} &
\includegraphics[width=0.10\textwidth]{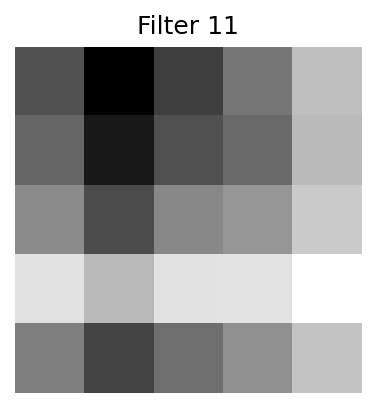}

\end{tabular}

\caption{Representative convolution filters before and after training.}
\label{fig:learned_filters}
\end{figure}

\begin{figure*}[t]
\centering

\hfill
\begin{minipage}{0.16\textwidth}
\centering
\vspace{-0.6mm}
\includegraphics[width=\linewidth]{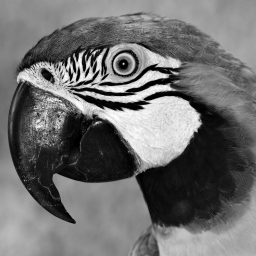}
{\footnotesize Clean}
\end{minipage}
\begin{minipage}{0.16\textwidth}
\centering
\includegraphics[width=\linewidth]{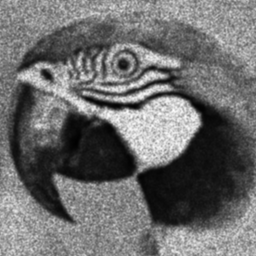}

{\footnotesize Indicator \cite{zhang2022image}}
\end{minipage}
\hfill
\begin{minipage}{0.16\textwidth}
\centering
\includegraphics[width=\linewidth]{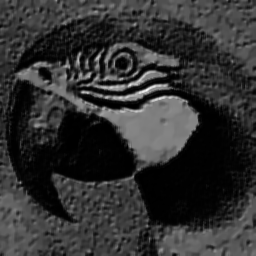}

{\footnotesize Epoch 0}
\end{minipage}
\hfill
\begin{minipage}{0.16\textwidth}
\centering
\includegraphics[width=\linewidth]{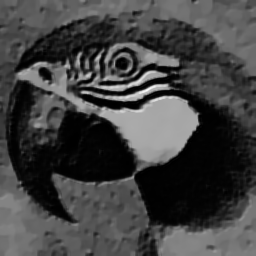}

{\footnotesize Epoch 5}
\end{minipage}
\hfill
\begin{minipage}{0.16\textwidth}
\centering
\includegraphics[width=\linewidth]{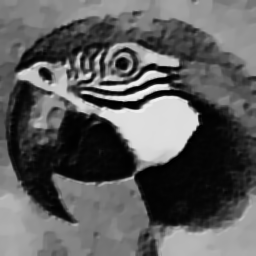}
{\footnotesize Final epoch}
\end{minipage}
\hfill
\begin{minipage}{0.16\textwidth}
\centering
\includegraphics[width=\linewidth]{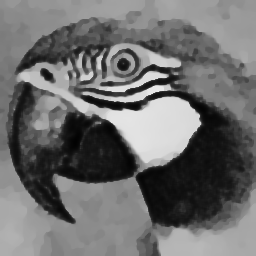}

{\footnotesize Recovery}
\end{minipage}
\caption{
Evolution of the learned area-related influence function throughout training and the corresponding reconstruction performance. From left to right: the ground-truth \emph{Parrot} image; the gray-level indicator $\left(\frac{G_\sigma *y}{\max G_\sigma *y }\right)^p$ from \cite{zhang2022image}, where the recommended parameters $\sigma=0.8$, $p=1.2$ are adopted; the learned area-related indicator $\sum_{l=1}^{N_a}\psi_l^a(k_l*y)$ at epochs 0, 5, and the final epoch; and the reconstruction produced by the final learned regularizer.}
\label{fig:psi_evolution}
\end{figure*}

This observation is consistent with the purpose of the area-related component in the proposed regularizer. Since the variance of multiplicative Gamma noise is proportional to the underlying signal intensity, brighter regions are generally subject to stronger noise corruption. Therefore, the quantity
$
\sum_{l=1}^{N_a}\psi_l^a(k_l*y)
$
is expected to provide a rough estimate of the latent clean image, 
enabling the strength of the area-related regularization to be adapted according to the local image content. A related strategy was adopted in \cite{zhang2022image,yang2025mixed}, where the normalized Gaussian-smoothed image, $\left(\frac{G_\sigma *y}{\max G_\sigma *y }\right)^p$,
was employed as a gray-level indicator. As illustrated in Fig. \ref{fig:psi_evolution}, the learned gray-level indicator provides a substantially more accurate approximation of the clean image than the fixed Gaussian indicator. These results demonstrate the benefit of learning the convolution filters and influence functions directly from data, thereby providing further support for the proposed extension of the model in \cite{yang2025mixed}.

The learned influence functions subsequently perform nonlinear modulation of these extracted local features. As shown in Fig. \ref{fig:influence_filt}(a), the curvature-related penalty function $\psi^c$ consistently attains its minimum around zero curvature, indicating that the proposed model favors locally flat or straight image structures while suppressing unnecessary geometric oscillations. This observation is consistent with the geometric interpretation of curvature regularization reported in \cite{zhu2012image}, where zero curvature corresponds to locally smooth structures.

In contrast, the influence functions $\psi^a$ associated with the learned convolutional filters exhibit substantially richer behaviors, as illustrated in Figs. \ref{fig:influence_filt}(b)-\ref{fig:influence_filt}(d). Besides the conventional unimodal penalties centered at zero, double-well and multi-well potentials are frequently observed, which have been widely adopted in image restoration models \cite{bertozzi2016diffuse,li2011multiphase}. This suggests that different filters favor different response magnitudes rather than uniformly penalizing all nonzero responses. Since different filters capture distinct local image characteristics, the corresponding learned penalties naturally adapt to these heterogeneous statistics. Moreover, several learned influence functions are asymmetric, implying that positive and negative filter responses are treated differently and thereby providing greater flexibility for modeling directional image structures.

We conduct an additional experiment to investigate the dependence of the learned regularizer $R_\theta$ on the training data. Specifically, we train the network on a dataset of piecewise-constant images with smooth boundaries, for which we expect the curvature component of $R_\theta$ to play a less significant role. The dataset consists of 300 images containing ellipses with random positions, aspect ratios, orientations, and intensities, generated using the Operator Discretization Library \cite{adler2017operator}.

Taking the AA-type fidelity as an example, we found that the learned area-related influence functions consistently exhibit the pattern shown as Activation II in Fig.~\ref{fig:influence_filt}, whereas the curvature-related influence functions remain nearly identical to their quadratic initialization throughout training (Fig.~\ref{fig:actandb_ODL}(a)). Meanwhile, the learned parameter $b$ gradually decreases during training (Fig.~\ref{fig:actandb_ODL}(b)). These results indicate that, for piecewise-constant images, the proposed framework naturally emphasizes the area-related regularization and learns a regularizer resembling the classical TV model.

\begin{figure}[h]
    \centering
    \begin{minipage}{\textwidth}
        \subfigcapskip=1pt
		\centering
        \subfigure[Activation I]{
            \includegraphics[width=0.23\textwidth,trim=0 0 0 17bp, clip]{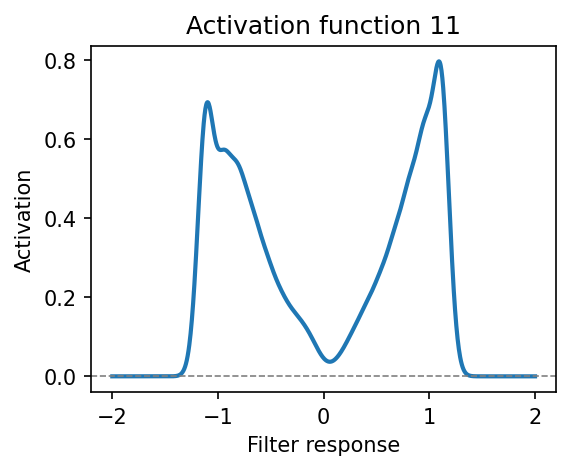}
        }\hspace{-0.2cm}
        \subfigure[Activation II]{
            \includegraphics[width=0.23\textwidth,trim=0 0 0 17bp, clip]{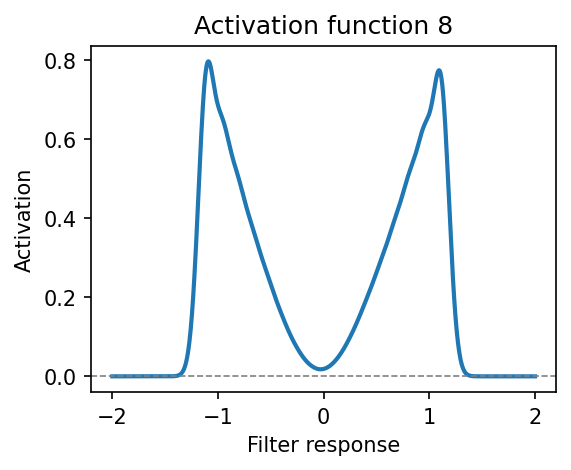}
        }\hspace{-0.2cm}
        \subfigure[Activation III]{
            \includegraphics[width=0.23\textwidth,trim=0 0 0 17bp, clip]{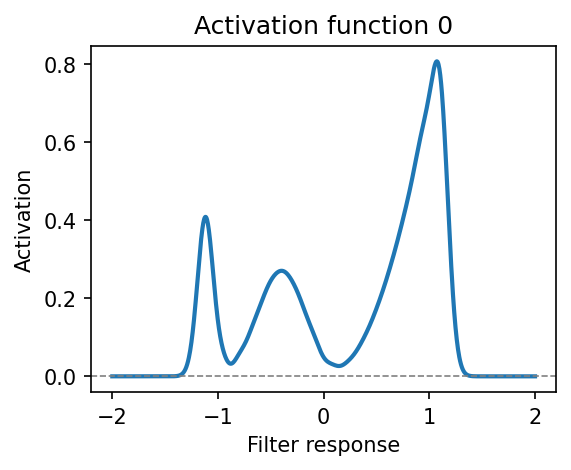}
        }\hspace{-0.2cm}
	     \subfigure[Activation IV]{
            \includegraphics[width=0.23\textwidth,trim=0 0 0 17bp, clip]{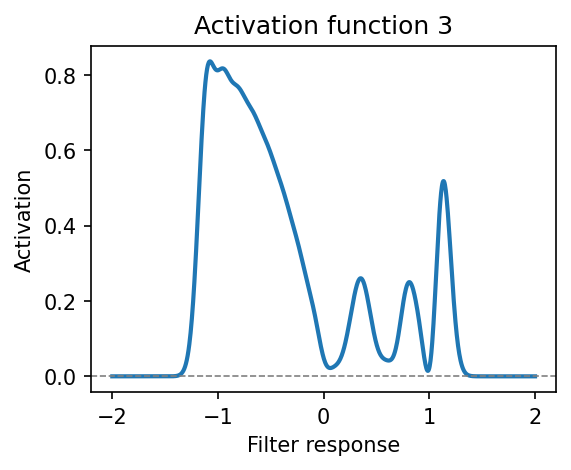}
        }
  \end{minipage}
\caption{Representative learned influence functions.
(a) The learned curvature influence function $\psi^{c}$.
(b)--(d) Representative learned activation influence functions $\psi^{a}$ corresponding to different convolutional filters, illustrating truncated convex, double-well, and multiple-well structures.}
    \label{fig:influence_filt}
\end{figure}

\begin{figure}[h]
    \centering
\begin{minipage}{\textwidth}
    \centering
    \subfigcapskip=1pt
    \subfigure[Curvature-aware influence function]{
        \includegraphics[width=0.43\textwidth,height=5.2cm,trim=0 0 0 17bp, clip]{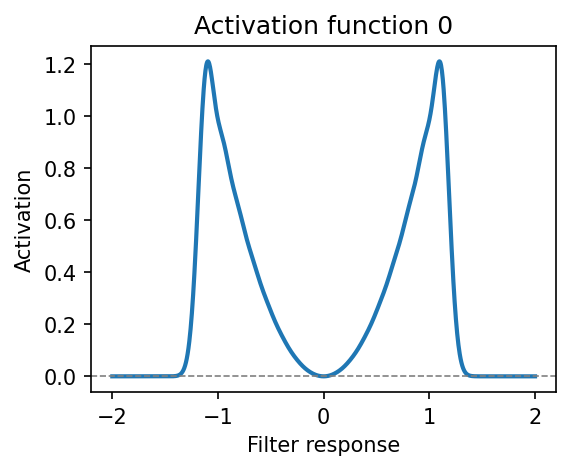}
    }
    \subfigure[Parameter $b$ evolution]{
        \raisebox{2mm}{
            \hspace{0mm}
            \includegraphics[width=0.46\textwidth]{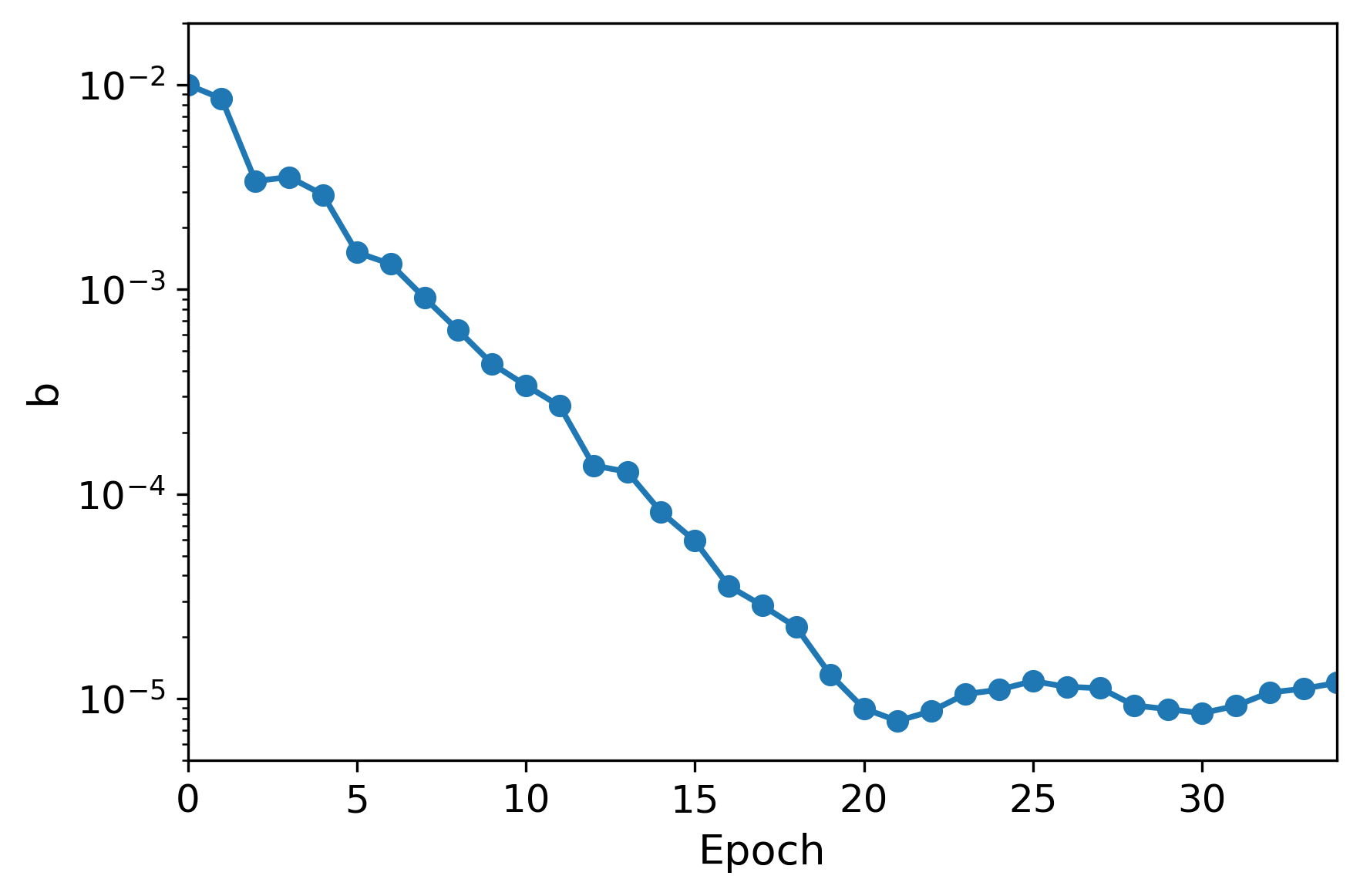}
        }
    }
\end{minipage}
  \caption{Ellipses dataset results. (a) Learned curvature-aware influence function. (b) Evolution of parameter $b$ across training epochs.}
    \label{fig:actandb_ODL}
\end{figure}

\section{Conclusions}\label{sec:conclude}

In this work, we proposed a deep equilibrium framework for joint image deblurring and multiplicative Gamma noise removal. By explicitly incorporating geometry-aware regularization with learnable filters and influence functions, the proposed method provides an interpretable balance between model-based and data-driven approaches. Furthermore, based on a suitable Bregman mirror descent scheme and the $o$-minimal framework, the convergence of the proposed algorithm to a critical point is established. Extensive experiments on grayscale and color images demonstrate the effectiveness of the proposed method compared with existing variational-based, and deep learning approaches.

\appendix
\section{Bregman geometry and proximal operators}\label{Appendix:Bregman}

In this section, we recall the definitions of the fundamental concepts from nonconvex optimization used throughout Sections \ref{ssec:MD_DEQ} and \ref{sec:Cov_DEQ}.

\begin{definition}[Kernel generating distance \cite{auslender2006interior}] Let $\Omega \subseteq \mathbb{R}^{n}$ be a nonempty, open, and convex set. A function $\phi:\mathbb{R}^{n}\rightarrow(-\infty,+\infty]$ is called a \emph{kernel generating distance} associated with $\Omega$ if it satisfies the following conditions:
\begin{enumerate}
    \item[(i)]  $\phi$ is proper, lower semicontinuous, and convex. Moreover, $\operatorname{dom}\phi \subseteq \overline{\Omega}$, $\operatorname{int}(\operatorname{dom}\phi)=\Omega$.
    \item[(ii)]  $\phi$ is continuously differentiable on $\Omega$, i.e., $\phi\in \mathcal{C}^{1}(\Omega)$.
\end{enumerate}
\label{def:kernelgere}
\end{definition}
We denote the class of kernel generating distances by $\mathcal{G}(\Omega)$.

\begin{definition}\cite{rockafellar1997convex} Let $h:X\subset\mathbb{R}^n\rightarrow(-\infty,+\infty]$ be a proper lower semicontinuous function.
\begin{enumerate}
\item[(i)] (essential smoothness). A differentiable function $h$ on $\operatorname{int}(\operatorname{dom}(h))$ is said to be essentially smooth if, for every sequence $\{x^k\}\subset\operatorname{int}(\operatorname{dom}(h))$ converging to a boundary point of $\operatorname{dom}(h)$, we have $\|\nabla h(x^k)\|\to+\infty$.
\item[(ii)](Legendre function). The function $h$ is called a Legendre function if it is essentially smooth and strictly convex on $\operatorname{int}(\operatorname{dom}(h))$.
\end{enumerate}
\label{def:legendre}
\end{definition}
\begin{definition}\cite{bregman1967relaxation}. Given a Legendre function $h\in\mathcal{G}(\Omega):X\subset\mathbb{R}^n\rightarrow(-\infty,+\infty]$, the associated Bregman distance between $x\in\operatorname{dom}h$ and $y\in\operatorname{int}(\operatorname{dom}(h))$ is defined by
\begin{equation*}
D_h(x,y)=h(x)-h(y)-\langle\nabla h(y),x-y\rangle.
\end{equation*}
\label{def:bregmandistance}
\end{definition}
\begin{remark}Owing to the convexity of $h$, the Bregman distance is strictly convex with respect to its first argument. Moreover, $D_h(x,y)\ge 0$ for all $(x,y)\in \operatorname{dom}(h)\times\operatorname{int}(\operatorname{dom}(h))$, with equality if and only if $x=y$. Consequently, $D_h$ serves as a generalized proximity measure. Unlike the Euclidean distance, however, it is generally non-symmetric. The special choice $h=\|\cdot\|^2$ yields the classical squared Euclidean distance.
\end{remark}
\begin{definition} Let $R:X\subset\mathbb{R}^n\rightarrow(-\infty,+\infty]$ be a proper lower semicontinuous function and $h:X \subset\mathbb{R}^n\rightarrow(-\infty,+\infty]$ be a Legendre function. The Bregman proximal operator associated with $R$ (w.r.t. $h$) is defined by
\begin{equation*}
\operatorname{Prox}_{\tau R}^h(x)=\arg\min_{u\in X}\lambda R(u)+D_h(u,x),\;x\in\operatorname{int}(\operatorname{dom}(h)),
\end{equation*}
where $\lambda>0$ is the step-size.
\end{definition}
\begin{remark}
Since $R$ may be nonconvex, the corresponding proximal mapping $\operatorname{Prox}_{\tau R}^h$ is generally set-valued. If, in addition, $R$ is convex and coercive, then the Bregman proximal operator is single-valued \cite{daniele2026deep}.
\end{remark}

\section{Kurdyka-\L{}ojasiewicz functions}\label{Appendix:KLfunctions}

This section specifically focuses on the Kurdyka-\L{}ojasiewicz (K\L{}), which plays a central role in the convergence analysis of nonconvex optimization algorithms. 
We first recall its definition and then discuss its verification in more general settings, such as the one considered in this project, where simpler tools based on global subanalyticity cannot be directly applied.

\begin{definition}Let $\Phi:\mathbb{R}^n\rightarrow (-\infty,+\infty]$ be a proper lower semicontinuous function.
\begin{enumerate}
\item[(a)]The function $\Phi$ is said to satisfy the K\L{} property at a point $\bar{x}\in\operatorname{dom}\Phi$ if there exist $\eta\in(0,+\infty]$, a neighborhood $U$ of $\bar{x}$, and a continuous concave function $\psi:[0,\eta)\rightarrow \mathbb{R}_+$ such that:
\begin{enumerate}
\item[(i)]$\psi(0)=0$;
\item[(ii)]$\psi$ is continuously differentiable on $(0,\eta)$ and continuous at 0;
\item[(ii)]$\psi^\prime(t)>0$ for all $t\in(0,\eta)$;
\item[(iv)]for every $x\in U\cap\{x\in\mathbb{R}^n:\Phi(\bar{x})<\Phi(x)<\Phi(\bar{x})+\eta\}$, the K\L{} inequality
\begin{equation*}
\psi^\prime(\Phi(x)-\Phi(\bar{x}))\operatorname{dist}(0,\partial \Phi(x))\geq 1
\end{equation*}
holds, where $\operatorname{dist}(0,\partial \Phi(x))$ denotes the distance from 0 to the subgradient set $\partial \Phi$.
\end{enumerate}
\item[(b)] The function $\Phi$ is called a K\L{} function if it satisfies the K\L{} property at every point in $\operatorname{dom}\Phi$.
\end{enumerate}
\end{definition}
Although the K\L{} property is generally difficult to verify directly from its definition, it is known to hold for several broad classes of functions frequently encountered in imaging and machine learning. Two particularly important examples are semialgebraic functions and functions definable in an $o$-minimal structure, which we briefly review below.

\subsection{Semialgebraic functions}\label{subsection:semialgic}
We next recall the class of semialgebraic sets and functions.
\begin{definition}(Semialgebraic sets and semialgebraic functions).
\begin{itemize}
\item[$\bullet$] A set $S\subset\mathbb{R}^n$ is called semialgebraic if there exist finitely many polynomial functions $f_{ij},g_{ij}:\mathbb{R}^n\rightarrow \mathbb{R}$ such that
\begin{equation*}
S=\bigcup_{i=1}^p\bigcap_{j=1}^q\{x\in\mathbb{R}^n:f_{ij}(x)=0, g_{ij}(x)<0\}.
\end{equation*}
\item[$\bullet$]A function $f:\mathbb{R}^n\rightarrow\mathbb{R}\cup\{+\infty\}$ is called semialgebraic if its graph
\begin{equation*}
\{(x,t)\in\mathbb{R}^{n+1}:f(x)=t\}
\end{equation*}
is a semialgebraic subset of $\mathbb{R}^{n+1}$.
\end{itemize}
\end{definition}

The class of semialgebraic sets is closed under finite unions, finite intersections, complements, and Cartesian products. Correspondingly, semialgebraic functions form a rich and flexible class, which is closed under finite sums, products, and compositions. Important examples include real polynomial functions and indicator functions of semialgebraic sets.


\subsection{Functions definable in an $o$-minimal structure}
We then recall the notion of an $o$-minimal structure, introduced by Lou van den Dries \cite{van1996geometric}, which provides a powerful framework for analyzing functions arising in optimization; see also \cite{bolte2007clarke}.
\begin{definition}{($o$-minimal structure)}
Let $\mathcal{M}=\{\mathcal{M}_n\}_{n\geq1}$ be a sequence such that each $\mathcal{M}_n$ is a family of subsets of $\mathbb{R}^n$. We say that $\mathcal{M}$ is an $o$-minimal structure on the field $(\mathbb{R},+,\cdot)$ if it satisfies the following conditions:

\begin{enumerate}
    \item[(i)] For every $n\geq1$, the collection $\mathcal{M}_n$ is a Boolean algebra of subsets of $\mathbb{R}^n$. Equivalently, it contains $\emptyset$ and $\mathbb{R}^n$, and is closed under finite unions, finite intersections, and complements.

    \item[(ii)] If $S\in\mathcal{M}_n$, then both cylinder sets
    \[
    S\times\mathbb{R}
    \quad\text{and}\quad
    \mathbb{R}\times S
    \]
    belong to $\mathcal{M}_{n+1}$.

    \item[(iii)] Whenever $T\in\mathcal{M}_{n+1}$, its projection onto the first $n$ coordinates,
    \[
    \pi(T)
    :=
    \left\{
    (x_1,\ldots,x_n)\in\mathbb{R}^n:
    \exists\,x_{n+1}\in\mathbb{R},
    (x_1,\ldots,x_n,x_{n+1})\in T
    \right\},
    \]
    belongs to $\mathcal{M}_n$.

    \item[(iv)] For every polynomial function $p\colon \mathbb{R}^n \to \mathbb{R}$, its zero set belongs to $\mathcal{M}_n$; that is,
    \[
    \{x \in \mathbb{R}^n: p(x) = 0\}\in \mathcal{M}_n.
    \]
    \item[(v)] The sets in $\mathcal{M}_1$ are exactly the finite unions of intervals and points.
\end{enumerate}
\end{definition}

A set $A$ is said to be definable in $\mathcal{M}$ if $A\in\mathcal{M}$. This notion naturally extends to functions through their graphs, leading to the following definition of definable functions.
\begin{definition}(Definable functions)
A function $f:\mathbb{R}^n\to\mathbb{R}\cup\{+\infty\}$ is called definable in $\mathcal{M}$ if its graph
\[
\operatorname{gph} f
:=
\{(x,f(x)):x\in\operatorname{dom}f\}
\]
is a definable subset of $\mathbb{R}^{n+1}$, i.e., $\operatorname{gph} f \in \mathcal{M}_{n+1}$.
\end{definition}
As discussed in \cite{attouch2010proximal}, functions definable in an $o$-minimal structure inherit many of the favorable properties enjoyed by semialgebraic functions. In particular, the class of definable functions is closed under several fundamental operations, including finite sums, composition, generalized inversion, and indicator functions of definable sets.

The $o$-minimal framework encompasses a remarkably broad class of functions that commonly arise in optimization and imaging applications, including (see \cite{van1996geometric}):
\begin{itemize}
\item Semialgebraic functions, which can be interpreted as functions definable in the $o$-minimal structure $\mathcal{M}(\mathbb{R},+,\cdot,(r)_{r\in \mathbb{R}})$, namely the smallest structure in $(\mathbb{R},+,\cdot)$ containing all singletons. Indeed, by the Tarski-Seidenberg theorem, the collection of semialgebraic sets coincides with $\mathcal{M}(\mathbb{R},+,\cdot,(r)_{r\in \mathbb{R}})$.
\item Globally subanalytic functions \cite{bolte2007lojasiewicz}, which are definable in the structure $\mathcal{M}(\mathbb{R}_{\text{an}})$, where $\mathbb{R}_{\text{an}}=(\mathbb{R},+,\cdot,(f))$ and $f$ ranges over restricted analytic functions.
\item Log-exp functions, which are definable in the structure $\mathcal{M}(\mathbb{R}_{\text{an,exp}})$, related to the field $\mathbb{R}_{\text{an,exp}}=(\mathbb{R},+,\cdot,(f),\text{exp})$, where $f$ ranges over restricted analytic functions and $\text{exp}(x)=e^x$.
\end{itemize}
These definable structures satisfy the following inclusion relationship:
\begin{equation*}
\mathcal{M}(\mathbb{R},+,\cdot,(r)_{r\in \mathbb{R}})\subseteq \mathcal{M}(\mathbb{R}_{\text{an}}) \subseteq \mathcal{M}(\mathbb{R}_{\text{an,exp}}).
\end{equation*}
For a comprehensive introduction to $o$-minimal structures, we refer the reader to \cite{van1996geometric,bolte2007clarke}. Among their numerous applications in optimization theory, one of the most important is the extension of the Kurdyka-\L{}ojasiewicz framework.
\begin{theorem}\label{theo:ominKL}
Suppose that $f:\mathbb{R}^{n}\rightarrow\mathbb{R}\cup\{+\infty\}$ is a proper lower semicontinuous function definable in an $o$-minimal structure $\mathcal{M}$. Then, for every $x\in\operatorname{dom}\partial f$, the function $f$ satisfies the Kurdyka--{\L}ojasiewicz property at $x$.
\end{theorem}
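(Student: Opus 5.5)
This theorem is the KŁ property for proper lsc functions definable in an o-minimal structure. I would follow the nonsmooth definable Łojasiewicz argument of Bolte, Daniilidis, Lewis and Shiota, in three stages.

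\textbf{Step 1: reduction to a critical point.} Fix $\bar x\in\operatorname{dom}\partial f$. If $0\notin\partial f(\bar x)$, the property holds by a standard elementary argument. Outer semicontinuity of the limiting subdifferential under $f$-attentive convergence gives $\operatorname{dist}(0,\partial f(x))\ge c>0$ for all $x$ near $\bar x$ with $f(x)$ near $f(\bar x)$. One then takes the linear desingularizer $\psi(t)=t/c$. Hence we may assume $\bar x$ is critical and, after translating, $f(\bar x)=0$.

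\textbf{Step 2: a definable one-variable function.} Restrict to a closed ball $B$ around $\bar x$ and set
\[
\varphi(t)=\inf\{\operatorname{dist}(0,\partial f(x)) : x\in B,\ f(x)=t\},\qquad t\in(0,\eta).
\]
The graph of $\partial f$ is definable, since it is obtained from $\operatorname{gph} f$ by first-order formulas (Fréchet subgradients, then closure). Consequently $\varphi$ is a definable function of one real variable. By the monotonicity theorem (property (v) of the structure and its consequences), $\varphi$ is $\mathcal C^1$ and monotone on some $(0,\eta)$.

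\textbf{Step 3: positivity of $\varphi$ (the main obstacle).} We must show $\varphi(t)>0$ for small $t>0$, i.e.\ the critical values of $f$ near $0$ are isolated. I would prove this with the definable Morse–Sard theorem for definable lsc functions: a Whitney stratification of $\operatorname{gph} f$ with the projection formula $\partial f(x)\subset\nabla f|_{S}(x)+N_S(x)$ implies that the set of critical values is finite. Shrinking $\eta$ then gives $\varphi>0$ on $(0,\eta)$.

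\textbf{Step 4: construct $\psi$.} A growth lemma for definable functions gives $\varphi(t)\ge c\,t^{\theta}$, or more generally a definable lower bound. One then defines $\psi(t)=\int_0^t\frac{1}{\tilde\varphi(s)}\,ds$ for a suitable definable minorant $\tilde\varphi$ that is increasing, so that $\psi$ is concave. Monotonicity ensures $\psi$ is finite near $0$ and $\mathcal C^1$, with $\psi'>0$. By construction,
\[
\psi'(f(x))\operatorname{dist}(0,\partial f(x))\ge 1
\]
on $B\cap[0<f<\eta]$.

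I expect the projection formula and stratification step to be the real difficulty. In a paper of this type I would instead cite Bolte–Daniilidis–Lewis–Shiota (Theorem 14), together with the o-minimal references listed above.
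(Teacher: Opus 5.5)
The paper gives no proof of this statement. It is quoted from the literature (the result of Bolte--Daniilidis--Lewis--Shiota that the appendix points to), so your closing sentence is exactly what the paper does.

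Read as a self-contained proof, however, your sketch has a genuine gap in Step 4. The claim that monotonicity makes $\psi(t)=\int_0^t ds/\tilde\varphi(s)$ finite is false. If $\tilde\varphi$ is increasing with $\tilde\varphi(0^+)=0$, then $1/\tilde\varphi$ is monotone but need not be integrable: $\tilde\varphi(s)=s$ is definable and makes the integral diverge. Integrability of $1/\varphi$ at $0$ is the heart of the K\L{} inequality, and no growth lemma supplies it. In a polynomially bounded structure you only get $\varphi(t)\sim c\,t^{\alpha}$ for some exponent $\alpha$, with nothing forcing $\alpha<1$. In $\mathbb{R}_{\text{an,exp}}$, the structure the paper needs, definable one-variable functions need not be comparable to powers at all.

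Step 3 has a related defect for merely lower semicontinuous $f$: $\varphi(t)>0$ is not the same as $t$ being a regular value. A sequence $x_k\in B$ with $f(x_k)=t$ and $\operatorname{dist}(0,\partial f(x_k))\to0$ may converge to some $x^*$ with $f(x^*)<t$. That convergence is not $f$-attentive, so closedness of $\partial f$ gives nothing, and finiteness of critical values (Morse--Sard) does not yield $\varphi>0$. For example, take $f(x_1,x_2)=1+x_1x_2$ for $x_1>0$ and $f=0$ otherwise. Then $1$ is not a critical value, yet $\varphi(1)=0$ on every ball around the origin. Your identification is valid only when $f$ is continuous on $B$.

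The missing ingredient is Kurdyka's curve argument, which is where the projection formula is really needed.
\begin{itemize}
\item Put $\varphi_R(t)=\inf\{\|\nabla(f|_S)(x)\|: x\in B,\ f(x)=t\}$, with $S$ the stratum through $x$. Your projection formula gives $\|\nabla(f|_S)(x)\|\le\operatorname{dist}(0,\partial f(x))$, so $\varphi\ge\varphi_R$.
\item By definable choice, take a definable curve $\gamma$ with $f(\gamma(t))=t$ and $\|\nabla(f|_S)(\gamma(t))\|\le 2\varphi_R(t)$. Use the bound $t$ instead if $\varphi_R$ vanishes near $0$; if the level sets are empty there is nothing to prove.
\item By o-minimality, on some $(0,\varepsilon)$ the curve is $\mathcal C^1$ and lies in a single stratum $S$, so
\[
1=\frac{d}{dt}f(\gamma(t))=\langle\nabla(f|_S)(\gamma(t)),\gamma'(t)\rangle\le 2\varphi_R(t)\,\|\gamma'(t)\|.
\]
\item A bounded definable curve has finite length, since each coordinate is piecewise monotone with finitely many pieces. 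Hence $\int_0^\varepsilon dt/\varphi(t)\le 2\int_0^\varepsilon\|\gamma'(t)\|\,dt<\infty$, which is exactly the integrability you need.
\item The variant with bound $t$ gives $\|\gamma'(t)\|\ge 1/t$, i.e.\ infinite length. This rules out $\varphi_R\equiv0$ and repairs Step 3 for lsc $f$ as well.
\end{itemize}
With this inserted, your Steps 1--2 and the concave construction of $\psi$ complete the argument.
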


\section*{Acknowledgments}
The research of Luca Ratti was partially supported by the PNRR project M4C2-PE00000013 (FAIR), Spoke 8 ``Pervasive AI'', funded by the NextGeneration EU programme (CUP J33C22002830006), and by the INDAM-GNCS 2026 project (CUP E53C25002010001). The research of Zhichang Guo was supported by the National Key Research and Development Program of China (Grant No. 2024YFE0216800).

\bibliographystyle{abbrv}

\bibliography{cas-refs}



\end{document}